\newcommand{\stkout}[1]{\ifmmode\text{\sout{\ensuremath{#1}}}\else\sout{#1}\fi}
\newcommand{\ttup}[1]{\textup{(}#1\textup{)}}
\definecolor{dmagenta}{rgb}{.6,.0,.8}
\definecolor{dblue}{rgb}{.0,.0,.5}
\definecolor{mblue}{rgb}{.0,.0,.8}
\definecolor{ddblue}{rgb}{.0,.0,.4}
\definecolor{dred}{rgb}{.6,.0,.0}
\definecolor{dgreen}{rgb}{.0,.5,.0}
\definecolor{Eeom}{rgb}{.0,.0,.5}
\newtheorem{lemma}{Lemma}[section]
\newtheorem{theorem}{Theorem}[section]
\newtheorem{corollary}{Corollary}[section]
\theoremstyle{definition}
\newtheorem{definition}{Definition}[section]
\newtheorem{assumption}{Assumption}[section]
\newtheorem{example}{Example}[section]
\theoremstyle{remark}
\newtheorem{remark}{Remark}[section]
\numberwithin{equation}{section}
\newcommand{\compcent}[1]{\vcenter{\hbox{$#1\circ$}}}
\newcommand{\comp}{\mathbin{\mathchoice
{\compcent\scriptstyle}{\compcent\scriptstyle}
{\compcent\scriptscriptstyle}{\compcent\scriptscriptstyle}}}
\crefname{section}{Section}{Sections}
\crefname{subsection}{Section}{Sections}
\crefname{hypothesis}{Hypothesis}{Conditions}
\crefname{assumption}{Assumption}{Assumptions}
\crefname{lemma}{Lemma}{Lemmas}
\Crefname{figure}{Figure}{Figures}
\newcommand{\process}[1]{{\{#1_t\}_{t\ge0}}}
\newcommand{\sF}{\mathfrak{F}}% sigma field
\newcommand{\Bor}{{\mathfrak{B}}} % Borel sigma field
\newcommand{\cP}{{\mathcal{P}}}   % Probability measures
\newcommand{\eom}{{\mathscr{G}}}  % set of ergodic occupation measures
\newcommand{\sC}{{\mathscr{C}}}   % Dense set in C^2
\newcommand{\Ind}{\mathds{1}} % indicator function
\newcommand{\Cc}{\mathcal{C}} % Continuous Functions
\newcommand{\Lp}{{L}} % L^p space
\newcommand{\Lpl}{{L}_{\text{loc}}}   % L^p space (loc)
\newcommand{\Sob}{{\mathscr W}}% Sobolev Space
\newcommand{\Sobl}{{\mathscr W}_{\text{loc}}} % Sobolev Space(loc)
\newcommand{\sA}{{\mathscr{A}}} %operator
\newcommand{\Ag}{{\mathcal{A}}}  % non local generator (diffusion)
\newcommand{\Lg}{{\mathcal{L}}}  % local part of generator (diffusion)
\newcommand{\rc}{{\mathscr{R}}}   % running cost
\newcommand{\Act}{{\mathbb{U}}}   % Action space
\newcommand{\Uadm}{{\mathfrak{U}}}% Admissible Controls
\newcommand{\Usm}{{{\mathfrak{U}}_{\mathrm{sm}}}} % Stationary Controls
\newcommand{\Ussm}{{{\mathfrak{U}}_{\mathrm{ssm}}}}   % Stable Stationary Controls
\newcommand{\Ussmb}{{{\Tilde{\mathfrak{U}}}_{\mathrm{ssm}}}} % stabilizing controls
\newcommand{\RR}{\mathds{R}}  % Reals
\newcommand{\NN}{\mathds{N}}  % Natural numbers
\newcommand{\DD}{\mathds{D}}
\newcommand{\Rd}{{\mathds{R}^{d}}}
\DeclareMathOperator{\Exp}{\mathbb{E}}
\DeclareMathOperator{\Prob}{\mathbb{P}}
\newcommand{\D}{\mathrm{d}}
\newcommand{\E}{\mathrm{e}}
\newcommand{\sB}{{\mathscr{B}}}  % Ball
\newcommand{\cI}{{\mathcal{I}}}  % integral part of generator
\newcommand{\sI}{{\mathscr{I}}}  % Set of classes
\newcommand{\sJ}{{\mathscr{J}}}  % set of pools
\newcommand{\cK}{{\mathcal{K}}}  % partition set for structural assumption
\newcommand{\Vo}{\mathscr{V}_{\!\circ}}  % Lyapunov function
\newcommand{\sV}{\mathscr{V}}  % Lyapunov function
\newcommand{\cN}{{\mathcal{N}}} % martingale measure
\newcommand{\abs}[1]{\lvert#1\rvert}
\newcommand{\norm}[1]{\lVert#1\rVert}
\newcommand{\babs}[1]{\bigl\lvert#1\bigr\rvert}
\newcommand{\babss}[1]{\biggl\lvert#1\biggr\rvert}
\newcommand{\bnorm}[1]{\bigl\lVert#1\bigr\rVert}
\newcommand{\bnormm}[1]{\biggl\lVert#1\biggr\rVert}
\newcommand{\transp}{^{\mathsf{T}}}
\newcommand{\df}{:=}
\DeclareMathOperator*{\osc}{osc}
\DeclareMathOperator*{\Argmin}{Arg\,min}
\DeclareMathOperator*{\diag}{diag}
\newcommand{\order}{{\mathscr{O}}}
\newcommand{\sorder}{{\mathfrak{o}}}
\newcommand{\grad}{\nabla}
\newcommand{\uuptau}{{\Breve\uptau}}
\newcommand{\ttl}{\Large
Ergodic control of diffusions with compound Poisson jumps \\[3pt]
under a general structural hypothesis}
\begin{document}

\title[Ergodic control of diffusions with compound Poisson jumps]{\ttl}
%\date{\today}
%%%%%%%%%%%%%%%%%%%%%%%%%%%%%%%%%%%%%%%%%%%%%%%%%%%%%%%%%%%%%%%%%%%%%%%%%%%%%%%
\author[Ari Arapostathis]{Ari Arapostathis$^\dag$}
\address{$^\dag$ Department of Electrical and Computer Engineering\\
The University of Texas at Austin\\
2501 Speedway, EERC 7.824\\
Austin, TX~~78712, USA}
\email{ari@ece.utexas.edu}

\author[Guodong Pang]{Guodong Pang$^\ddag$}
\author[Yi Zheng]{Yi Zheng$^\ddag$}
\address{$^\ddag$ The Harold and Inge Marcus Department of Industrial and
Manufacturing Engineering,
College of Engineering,
Pennsylvania State University,
University Park, PA 16802}
\email{$\lbrace$gup3,yxz282$\rbrace$@psu.edu}

%%%%%%%%%%%%%%%%%%%%%%%%%%%%%%%%%%%%%%%%%%%%%%%%%%%%%%%%%%%%%%%%%%%%%%%%%%%%%%%
\begin{abstract} 
We study the ergodic control problem for a class of controlled jump diffusions
driven by a compound Poisson process.
This extends the results of
[\textit{SIAM J.\ Control Optim.\/} \textbf{57} (2019), no.~2, 1516--1540]
to running costs that are not near-monotone.
This generality is needed in applications such as optimal scheduling
of large-scale parallel server networks.

We provide a full characterizations of optimality via
the Hamilton--Jacobi--Bellman (HJB) equation, for which we additionally
exhibit regularity of solutions under mild hypotheses.
In addition, we show that optimal stationary Markov controls
are a.s.\ pathwise optimal.
Lastly, we show that one can fix a stable control outside a compact set
and obtain near-optimal solutions by solving the HJB on a
sufficiently large bounded domain. 
This is useful for constructing asymptotically optimal scheduling policies
for multiclass parallel server networks.
\end{abstract}

\subjclass[2000]{Primary: 93E20, 60J75, 35Q93. Secondary: 60J60, 35F21, 93E15.}

\keywords{controlled jump diffusions, compound Poisson process, ergodic control, Hamilton--Jacobi--Bellman (HJB) equation, stable Markov optimal control, pathwise optimality, approximate HJB equation, 
spatial truncation}

%%%%%%%%%%%%%%%%%%%%%%%%%%%%%%%%%%%%%%%%%%%%%%%%%%%%%%%%%%%%%%%%%%%%%%%%%%%%%%%
\maketitle

%%%%%%%%%%%%%%%%%%%%%%%%%%%%%%%%%%%%%%%%%%%%%%%%%%%%%%%%%%%%%%%%%%%%%%%%%%%%%%%
\section{Introduction}

Control problems for jump diffusions have been studied extensively. 
We refer the readers to \cite{BenLi-84} and references therein for the study of the 
discounted problem and many applications.
In \cite{Menaldi-99}, the ergodic control problem under a strong blanket
stability condition  (see \cite[(1.6)]{Menaldi-99}) has been studied.
In \cite{ACPZ19}, the authors have studied the ergodic control problem
for jump diffusions when the associated L\'evy measures
are finite and state-dependent and have rough kernels
under a near-monotone running cost function.
However, in many applications
the dynamics are not stable under any Markov control,
nor do they have a near-monotone running cost function.
In this paper we waive these assumptions, and study
the ergodic control problem
under the more general structural hypotheses
(see \cref{A2.1,A2.2}) first introduced in
\cite{ABP15}, and also used in \cite{AP16a} in the study of multiclass
multi-pool queueing networks.

The class of jump diffusions studied in this paper is abstracted from the 
diffusion limit of multiclass queueing networks in the Halfin--Whitt regime
with service interruptions \cite{APS19}.
The jump process in this model is compound Poisson, and thus
the associated L\'evy measure is finite.
However, it not have any particular regularity properties
such as density.
In addition, the running cost function, which typically penalizes the queue size,
is not near-monotone.
We abstract and generalize this model, and consider a large
class of diffusions with jumps, which includes models having a
near-monotone running cost function, or with uniformly stable dynamics as special cases. 

We first establish the existence of an optimal stationary Markov control
for the ergodic control problem, 
and characterize all optimal stationary Markov controls via the ergodic
Hamilton--Jacobi--Bellman (HJB) equation. 

It is shown in \cite[Example 1.1]{ACPZ19} that the 
Harnack property may fail for
infinitesimal generators of jump diffusions with compound Poisson jumps.
Thus the approach developed in \cite{ABG12,ABP15} for the study of the ergodic HJB equation 
associated with continuous diffusions cannot be applied here.
On the other hand,  the running cost function is assumed near-monotone in \cite{ACPZ19}, 
and thus the infimum of the value function for the discounted problem is attained
in a compact set (see \cite[Theorem~3.2]{ACPZ19}),
and the solutions of the ergodic HJB equation are bounded from below.
In the present paper, we extend the technique developed in \cite{ACPZ19},
and derive the ergodic HJB under \cref{A2.1,A2.2}.
This is rather delicate, and requires an estimate of the negative part
of the solutions of the HJB.

Another difficulty concerns the regularity of solutions of the discounted and 
ergodic HJB equations associated with jump diffusions, when the L\'evy kernel
is rough.
In \cite{ACPZ19}, we show that the solutions have
locally H\"older continuous second order derivatives when the L\'evy measure has a
compact support (see \cite[Remark~3.4]{ACPZ19}).  
In this paper, we present a gradient estimate for solutions of
a class of second order nonlocal equations in \cref{L5.3} using scaling, and employ
this to establish $\Cc^{2,\alpha}$ regularity of the solutions
of the HJB equations in \cref{T5.3}.
  
We also study pathwise optimality of optimal controls for the ergodic control problem.
For continuous diffusion processes, pathwise optimality has
been studied in \cite{PMB00,PWM04,ABG12,AA17,Ghosh-97}.
%Pathwise optimality of impulse controls under ergodic cost has been studied
%\cite{AM06}.
Pathwise optimality for jump diffusions with near-monotone running cost
is studied in \cite[Theorem~4.4]{ACPZ19}. 
We extend the technique in \cite{AA17},
using also the result on convergence of random
empirical measures for jump diffusions in \cite[Lemma~4.3]{ACPZ19} while providing 
a crucial estimate on the nonlocal term, 
to establish pathwise optimality for the model studied in this paper. 

The ability to synthesize a near-optimal Markov control, by fixing
a suitable stable control outside a large ball and solving the HJB equation
inside the ball plays a crucial role in the study of asymptotic optimality
for multiclass parallel server networks.
This is used in \cite{ABP15,AP18,AP19,ADPZ19} to
construct asymptotically near-optimal scheduling policies for the prelimit system.
In addressing this problem for jump diffusions, 
we first derive a lower bound for supersolutions of a general class of
integro-differential equations in \cref{L7.1}, and then use this
to establish the required result in \cref{T7main,C7.1}. 
In turn, this result is used to establish the asymptotic optimality of
multiclass networks with service interruptions in \cite{APZ19b}.

%%%%%%%%%%%%%%%%%%%%%%%%%%%%%%%%%%%%%%%%%%%%%%%%%%%%%%%%%%%%%%%%%%%%%%%%%%%%%%%%
\subsection{Organization of the paper}
In the next subsection, we summarize the notation used in this paper.  
In \cref{S2}, we introduce the model and state the assumptions.
\cref{S3} contains some examples from queueing networks
whose limiting controlled jump diffusions satisfy these assumptions.
\cref{S4} concerns the existence of optimal stationary Markov controls.
\cref{S5} is devoted to the study of the HJB equations
on the discounted and ergodic control problems.
In \cref{S6}, we study the pathwise optimality for the ergodic control problem.
The characterization of near-optimal controls is studied in \cref{S7}.

%%%%%%%%%%%%%%%%%%%%%%%%%%%%%%%%%%%%%%%%%%%%%%%%%%%%%%%%%%%%%%%%%%%%%%%%%%%%%%%%
\subsection{Notation}
The standard Euclidean norm in $\RR^{d}$ is denoted by $\abs{\,\cdot\,}$,
$\langle\,\cdot\,,\cdot\,\rangle$ denotes the inner product,
and $x\transp$ denotes the transpose of $x\in\Rd$.
The set of nonnegative real numbers is denoted by $\RR_{+}$,
$\NN$ stands for the set of natural numbers, and $\Ind$ denotes
the indicator function.
The minimum (maximum) of two real numbers $a$ and $b$ is denoted by $a\wedge b$ 
($a\vee b$), respectively,
and $a^\pm \df (\pm a)\vee 0$.
The closure, boundary, and the complement
of a set $A\subset\Rd$ are denoted
by $\Bar{A}$, $\partial{A}$, and $A^{c}$, respectively.
We also let $e\df (1,\dotsc,1)\transp$.
For any function $f\colon\RR^{d}\to\RR$
and domain $D\subset\RR$ we define the oscillation of $f$ on $D$ as follows:
\begin{equation*}
\osc_D\,f\,\df\,\sup\,\bigl\{f(x)-f(y)\,\colon\,x,\,y\in D\bigr\}\,.
\end{equation*}

We denote by $\uptau(A)$ the \emph{first exit time} of the process
$\{X_{t}\}$ from the set $A\subset\RR^{d}$, defined by
\begin{equation*}
\uptau(A) \,\df\, \inf\,\{t>0\,\colon\, X_{t}\not\in A\}\,.
\end{equation*}
The open ball of radius $r$ in $\RR^{d}$, centered at $x\in\RR^d$
is denoted by $B_{r}(x)$. 
We write $B_r$ for $B_r(0)$, and let $\uptau_{r}\df \uptau(B_{r})$,
and $\uuptau_{r}\df \uptau(B^{c}_{r})$.

The term \emph{domain} in $\RR^{d}$
refers to a nonempty, connected open subset of the Euclidean space $\RR^{d}$. 
For a domain $D\subset\RR^{d}$,
the space $\Cc^{k}(D)$ ($\Cc^{\infty}(D)$), $k\ge 0$,
refers to the class of all real-valued functions on $D$ whose partial
derivatives up to order $k$ (of any order) exist and are continuous.
By $\Cc^{k,\alpha}(\RR^{d})$ we denote the set of functions that are
$k$-times continuously differentiable and whose $k$-th derivatives are locally
H\"{o}lder continuous with exponent $\alpha$.
The space $\Lp^{p}(D)$, $p\in[1,\infty)$, stands for the Banach space
of (equivalence classes of) measurable functions $f$ satisfying
$\int_{D} \abs{f(x)}^{p}\,\D{x}<\infty$, and $\Lp^{\infty}(D)$ is the
Banach space of functions that are essentially bounded in $D$.
The standard Sobolev space of functions on $D$ whose generalized
derivatives up to order $k$ are in $\Lp^{p}(D)$, equipped with its natural
norm, is denoted by $\Sob^{k,p}(D)$, $k\ge0$, $p\ge1$.
In general, if $\mathcal{X}$ is a space of real-valued functions on $Q$,
$\mathcal{X}_{\mathrm{loc}}$ consists of all functions $f$ such that
$f\varphi\in\mathcal{X}$ for every $\varphi\in\Cc_{\mathrm{c}}^{\infty}(Q)$.
In this manner we obtain for example the space $\Sobl^{2,p}(Q)$.

For $k\in\NN$, we 
let $\DD^k \df \DD(\RR_+,\RR^k)$ denote the space of 
$\RR^k$-valued c\'adl\'ag functions on $\RR_+$.
When $k=1$, we write $\DD$ for $\DD^k$. 

For a nonnegative function $g\in\Cc(\RR^{d})$ 
we let $\order(g)$ denote the space of functions
$f\in\Cc(\RR^{d})$ satisfying
$\sup_{x\in\RR^{d}}\,\frac{\abs{f(x)}}{1+g(x)}<\infty$.
We also let $\sorder(g)$ denote the subspace of $\order(g)$ consisting
of those functions $f$ satisfying
$\limsup_{\abs{x}\to\infty}\,\frac{\abs{f(x)}}{1+g(x)}=0$.

For a probability  measure $\mu$ in $\cP(\RR^d)$,
the space of Borel probability measures on $\RR^d$
under the Prokhorov topology, and a real-valued function
$f$ which is integrable with respect to $\mu$ we use the notation
$\mu(f)\df \int_{\Rd} f(x)\,\mu(\D{x})$.

%%%%%%%%%%%%%%%%%%%%%%%%%%%%%%%%%%%%%%%%%%%%%%%%%%%%%%%%%%%%%%%%%%%%%%%%%%%%%%%%
\section{The model and assumptions}\label{S2}
We consider a controlled jump diffusion process 
$\process{X}$ taking values in the $d$-dimensional Euclidean space $\RR^d$ 
defined by
\begin{equation}\label{E-sde}
\D{X_t} \,\df\,
b(X_t,U_t)\,\D{t} + \upsigma(X_t)\,\D{W_t} + \D{L_t}
\end{equation}
with $X_0 = x\in\RR^d$.
All random processes in \cref{E-sde}
are defined on a complete probability space $(\Omega,\sF,\Prob)$. 
The process $\process{W}$ is a $d$-dimensional standard Wiener process,
and $\process{L}$ is a L\'evy process defined as follows.
Let $\widetilde\cN(\D{t},\D{z})$ denote a martingale measure
on $\RR^l_*=\RR^l\setminus \{0\}$, $l\ge1$,
taking the form $\widetilde\cN(\D{t},\D{z}) = \cN(\D{t},\D{z}) - \Pi(\D{z})\D{t}$, 
where $\cN$ is a Poisson random measure, 
and $\Pi(\D{z})\D{t}$ is the corresponding intensity measure,
with $\Pi$ a finite measure on $\RR^l_*$. 
Then, $\process{L}$ is given by
\begin{equation*}
\D L_t \df \int_{\RR^l_*} g(z)\, \widetilde\cN(\D t, \D z) 
\end{equation*}
for a measurable function $g\colon \RR^d\times \RR^l \to \RR^d$.   
The control process $\process{U}$ takes values in a compact,
metrizable space $\Act$, $U_t(\omega)$ is jointly measurable in
$(t,\omega)\in [0,\infty)\times\Omega$,
and is non-anticipative:
for $s < t$, $\bigl(W_{t} - W_{s},\, 
\cN(t,\cdot)-\cN(s,\cdot)\bigr)$ is independent of
\begin{equation*}
\sF_{s} \,\df\, \text{the completion of~}
\sigma\{X_{0}, U_{r}, W_{r},\cN(r,\cdot)\,\colon\, r\le s\}
\text{~relative to~} (\sF,\Prob)\,.
\end{equation*}
Such a process $U$ is called an admissible control, and we let $\Uadm$ denote the set
of admissible controls.
We also assume that the initial conditions $X_0$, $W_0$ and $\cN(0,\cdot)$ are independent.

To guarantee the existence of a solution to the equation \cref{E-sde}, 
we impose the following usual assumptions on the drift,
matrix $\upsigma$ and jump functions
(compare with \cite{ACPZ19}*{Section~4.2}). 
The functions $b \colon  \RR^d\times\Act \mapsto \RR^d$ and 
$\upsigma = [\upsigma^{ij}]\colon\RR^d\mapsto\RR^{d\times d}$ are continuous 
and have at most affine growth on $\RR^d$.
Also, $b$ is locally Lipschitz continuous in its first argument uniformly
with respect to the second.
The matrix $\upsigma$ is locally Lipschitz continuous  and nonsingular.
We also assume that $\int_{\RR^l_*} \abs{g(z)}^2\,\Pi(\D{z}) < \infty$.
Define $\nu(A) \df \Pi\bigl(\{z\in\RR_{*}^{l}\,\colon g(z)\in A\}\bigr)$.
Thus, $\nu$ is a Radon measure on $\RR^d$, and we
let $\bm{\nu} \df \nu(\RR^d) = \Pi(\RR^l_*)$, which is finite.
These hypotheses are enforced throughout the rest of the paper.

Under the above assumptions on the parameters, \cref{E-sde} has a unique strong solution
under any admissible control $U$
(see, e.g., \cite{GS72}*{Part~II, \S\,7}),
which is right continuous w.p.1, and has the strong Feller property.
Recall that \emph{Markov controls} may be
identified with Borel measurable map $v$ on $\RR_+\times\RR^d$, by letting
$U_t = v(t,X_t)$.
For any such Markov control $v$,
define the associated diffusion process $\{X^{\circ}, t\ge 0\}$ by
\begin{equation}\label{E-diffusion}
\D{X^{\circ}_t} \,\df\,
b(X^{\circ}_t,v(t,X^{\circ}_t))\,\D{t} + \upsigma(X^{\circ}_t)\,\D{W_t}\,,
\end{equation}
with $X^{\circ}_0 = x^\circ\in\RR^d$.
It is well known that \cref{E-diffusion}
has a pathwise unique strong solution \cite{Gyongy-96}*{Theorem~2.4}.
Since $\Pi$ is finite, it follows by the construction of a solution
in \cite{Skorokhod-89}*{Chap.~1, Theorem~14} via \cref{E-diffusion} 
that \cref{E-sde} has a unique strong solution under any Markov control.
We say that a Markov control $v$ is \emph{stationary} if $v(t,x)$ is
independent of $t$, and we use the symbol
$\Usm$ to denote the set of these controls.

For $\varphi\in \Cc^2(\RR^d)$, define the integro-differential operator 
$\Ag\colon \Cc^2(\RR^d) \to \Cc(\RR^d \times \Act)$ by
\begin{equation}\label{E-Ag}
\Ag \varphi(x,u) \,\df\,  a^{ij}(x)
\partial_{ij}\varphi(x) + \widetilde{b}^i(x,u)\partial_i \varphi(x) 
+ \int_\Rd  \bigl(\varphi(x+y)-\varphi(x)\bigr)\,\nu(\D y)\,,
\end{equation}
where $a\df\frac{1}{2}\upsigma\upsigma\transp$,
and $\widetilde{b}(x,u) \df b(x,u)+ \int_\Rd z\,\nu(\D z)$.
With $u\in\Act$ treated as a parameter, we also
define $\Ag_u \varphi(x)\df \Ag \varphi(x,u)$.
We decompose this operator as $\Ag_u=\widetilde\Lg_u + \widetilde\cI$, where
\begin{equation}\label{E-cI}
\widetilde\Lg_u\varphi(x) \,\df\, a^{ij}(x)
\partial_{ij}\varphi(x) + \widetilde{b}^i(x,u)\partial_i \varphi(x)
- \bm{\nu}\varphi(x)\,, \quad \text{and}\quad 
\widetilde{\cI}\varphi(x)\,\df\, \int_\Rd \varphi(x+y)\,\nu(\D y)\,.  
\end{equation}

Let $D$ be a bounded domain with $C^{1,1}$ boundary.
Recall that $\uptau(D)$ denotes the first exit time from $D$.
As shown in \cite{ACPZ19}*{Lemma~4.1}, for any $f\in\Sobl^{2,d}(\RR^d)$, 
such that $\widetilde{\cI}\abs{f}\in \Lpl^{d}(\RR^d)$, we have 
\begin{equation}\label{E-Ito}
\Exp_x^U[f(X_{t\wedge\uptau(D)})] \,=\, 
f(x) + \Exp_x^{U}\biggl[\int_0^{t\wedge\uptau(D)}\Ag f(X_s,U_s)\,\D{s}\biggr]
\end{equation}
for all $x\in D$, $t\ge0$, and $U\in\Uadm$.
In addition,  \cref{E-Ito} holds if we replace $t\wedge\uptau(D)$ with $\uptau(D)$.
Here, $\Exp^U_x$ denotes the expectation operator on the canonical space of the
process under the control $U\in\Uadm$.
Equation \cref{E-Ito} arises from the well known Krylov's extension of the
It\^{o}'s formula, and we refer to this plainly as the It\^{o} formula.

%%%%%%%%%%%%%%%%%%%%%%%%%%%%%%%%%%%%%%%%%%%%%%%%%%%%%%%%%%%%%%%%%%%%%%%%%%%%%%%%
\subsection{The ergodic control problem}

Given a continuous \emph{running cost} function
 $\rc \colon \RR^d\times\Act \rightarrow \RR_+$, which
is locally Lipschitz continuous in its first argument
uniformly with respect to the second,
 we define the \emph{average} (or
\emph{ergodic}) penalty as
\begin{equation}\label{E-rhoU}
\varrho^{\vphantom{\frac{1}{2}}}_{U}(x)
\,\df\, \limsup_{T\rightarrow\infty}\,\frac{1}{T}\,
\Exp_x^U\biggl[\int_0^{T}\rc(X_t,U_t)\,\D{t}\biggr]\,.
\end{equation}
for an admissible control $U\in\Uadm$.
We say that $U\in\Uadm$ is \emph{stabilizing} if
$\varrho^{\vphantom{\frac{1}{2}}}_{U}(x)<\infty$ for all $x\in\Rd$.

The ergodic control problem seeks to minimize the ergodic penalty over
all admissible controls.
We define 
\begin{equation}\label{E-rho*}
\varrho_*(x)\,\df\,\inf_{U\in\Uadm}\,\varrho^{\vphantom{\frac{1}{2}}}_{U}(x)\,.
\end{equation}
As we show in \cref{T4.1}, the \emph{optimal ergodic value}
$\varrho_*$ does not depend on $x$.

\cref{A2.1} which follows,
is a slight variation of \cite{ABP15}*{Assumption 3.1},
and is abstracted from the limiting diffusions arising
in multiclass stochastic networks in the Halfin--Whitt regime.
Note that the assumption on the running cost in \cite{ACPZ19}*{Section 2.2}
is not met in these problems.
Recall that a function $f\,\colon \mathcal{X}\to\RR$, where $\mathcal{X}$ is
a $\sigma$-compact space, is called \emph{coercive}, or \emph{inf-compact}
if the set $\{ x\in\mathcal{X}\,\colon f(x)\le C\}$ is compact (or empty)
for every $C\in\RR$. 

%%%%%%%%%%%%%%%%%%%%%%%%%%%%%%%%%%%%%%%%%%%%%%%%%%%%%%%%%%%%%%%%%%%%%%%%%%%%%%%
\begin{assumption}\label{A2.1}
There exist some open set $\cK\subset\RR^{d}$, a ball $\sB_\circ$, 
and coercive nonnegative
functions $\Vo\in\Cc^{2}(\RR^{d})$ and $F \in \Cc(\RR^{d}\times\Act)$  
such that:
\begin{enumerate}
\item[(i)]
The running cost $\rc$ is coercive on $\cK$.
\item[(ii)]The following inequalities hold
\begin{equation}\label{EA2.1A}
\begin{split}
\Ag_u\Vo(x) &\,\le\, \Ind_{\sB_\circ}(x) - F(x,u)\qquad
\forall\,(x,u)\in \cK^{c}\times\Act\,,\\
\Ag_u\Vo(x) &\,\le\,\Ind_{\sB_\circ}(x) + \rc(x,u)
\qquad\forall\,(x,u)\in\cK\times\Act\,.
\end{split}
\end{equation}
\end{enumerate}
Without loss of generality, we assume $F$ is locally Lipschitz continuous
in its first argument.
\end{assumption}

Since we can always scale $\sB_\circ$,
$\Vo$ and $F$ to obtain the form in \cref{EA2.1A}, there is no
need to include any other constants in these equations. 
It is worth noting that $\rc$ is coercive on $\Rd$ 
if $\cK^c$ is bounded, and the controlled jump diffusion is uniformly stable
if $\cK$ is bounded. 

We introduce an additional assumption which, together with \cref{A2.1},
is sufficient for the existence
of a stabilizing stationary Markov control.
For $v\in\Usm$, we let $b_v(x)\df b\bigl(x,v(x)\bigr)$,
and define $\Ag_v$, $\widetilde\Lg_v$, $\rc_v$, and $\varrho_v$ analogously.
If under $v\in\Usm$ the controlled jump diffusion is positive recurrent,
then $v$ is called a \emph{stable} Markov control, 
and the set of such controls is denoted by $\Ussm$.

%%%%%%%%%%%%%%%%%%%%%%%%%%%%%%%%%%%%%%%%%%%%%%%%%%%%%%%%%%%%%%%%%%%%%%%%%%%%%%%%
\begin{assumption}\label{A2.2}
There exist $\Hat{v}\in\Ussm$, a positive constant $\Hat{\kappa}$,
and a coercive nonnegative function 
$\sV\in\Cc^2(\RR^d)$ such that
\begin{equation}\label{EA2.2A}
\Ag_{\Hat{v}}\sV(x) \,\le\,
\Hat{\kappa}\Ind_{\sB_\circ}(x) - \rc_{\Hat{v}}(x) \,,\qquad\forall x\in\RR^d\,,
\end{equation}
with $\sB_\circ$ as in \cref{A2.1}.
\end{assumption}

Without loss of generality,
we use the same ball $\sB_\circ$ in \cref{A2.1,A2.2} in the interest
of notational economy.

%%%%%%%%%%%%%%%%%%%%%%%%%%%%%%%%%%%%%%%%%%%%%%%%%%%%%%%%%%%%%%%%%%%%%%%%%%%%%%%%
\begin{remark}
The reader will note that \cref{A2.2} is not used in \cite{ABP15}.
Instead, starting from a weak stabilizability hypothesis, namely that
\begin{equation}\label{E-stab}
\varrho^{\vphantom{\frac{1}{2}}}_{U}(x)\,<\,\infty\quad\text{for some\ }x\in\Rd
\text{\ and\ } U\in\Uadm\,,
\end{equation}
the existence of a control $\Hat{v}\in\Ussm$ and a coercive nonnegative function
$\sV\in\Cc^2(\RR^d)$ satisfying \cref{EA2.2A} is established
in \cite{ABP15}*{Lemma~3.1}.
For the model studied in this paper, if we assume \cref{E-stab},
then together with \cref{A2.1} we can show, that there exists
a control $\Hat{v}$ which is stabilizing for some coercive running cost
$\widetilde\rc\ge\rc$ (see the proof of \cref{T4.1} which appears later).
Then, if $\nu$ has compact support, 
\cite{ACPZ19}*{Theorem~3.7} shows that there exists a function
$\sV\in\Sobl^{2,p}(\RR^d)$, for any $p>1$, satisfying \cref{A2.2},
and this implies that $\widetilde\cI\sV\in \Lpl^{d}(\RR^d)$.
Thus, if $\nu$ has compact support,
then the It\^o formula in \cref{E-Ito} is applicable to $\sV$, and using
this in the proofs, it follows
that as far as the results of this paper are concerned, we may replace
\cref{A2.2} with the weaker hypothesis in \cref{E-stab},
which cannot be weakened further since it is necessary for the
value of the ergodic control problem to be finite.
In typical applications, the existence of a stabilizing Markov
control is usually established by exhibiting
 a Foster--Lyapunov equation taking the form of \cref{EA2.2A}.

As we establish in \cref{T4.1}, \cref{A2.1} and \cref{E-stab} together
guarantee the existence of an optimal stationary Markov control
for the ergodic control problem.
Thus \cref{A2.2} need not be used for the existence part. 
However, it plays a crucial role in the derivation of
the HJB equation in \cref{S5} for non-compactly supported $\nu$.
\end{remark}

%%%%%%%%%%%%%%%%%%%%%%%%%%%%%%%%%%%%%%%%%%%%%%%%%%%%%%%%%%%%%%%%%%%%%%%%%%%%%%%%
\section{Examples}\label{S3}

In this section, we provide examples of stochastic networks, and 
show that the jump diffusions involved satisfy \cref{A2.1,A2.2}.
We refer the reader to 
\cite[Section 2]{AP16a} for a detailed description of multiclass multi-pool networks.

Consider a multiclass multi-pool network with $d$ classes of customers and $J$ server pools.
Define the sets $\sI \df \{1,\dots,d\}$, $\sJ \df \{1,\dots,J\}$, and
\begin{equation*}
\Act \,\df\, \bigl\{u = (u^c,u^s)\in\RR^d_+\times\RR^J_+ \colon 
\langle e,u^c \rangle = \langle e,u^s \rangle = 1 \bigr\}\,.
\end{equation*}
Following similar arguments as in \cite[Theorem 2.1]{APZ19b}, and assuming that
service interruptions are asymptotically negligible under the $\sqrt{n}$-scaling,
we can show that the limiting controlled queueing processes are 
$d$-dimensional jump diffusions taking the form 
\begin{equation}\label{ES3A}
\D X_t \,=\, b(X_t,U_t)\,\D{t} + \sigma\,\D{W}_t +  \theta\,\D{L}_t\,,
\end{equation} 
where
$\sigma$ is a nonsingular diagonal matrix, $\theta$ is a strictly positive vector, and 
$\process{L}$ is a one-dimensional compound Poisson process.
The L\'evy measure of $\theta L_t$ is denoted by $\nu(\D{z})$.
This is supported on $\{\theta t\colon t\in[0,\infty)\}$.
It follows by \cite[Lemma 4.3]{AP16a} that 
\begin{equation}\label{ES3B}
b(x,u) \,=\, \ell - M_1\bigl(x - \langle e,x \rangle^+u^c\bigr)
- \langle e,x \rangle^+\varGamma u^c + \langle e,x \rangle^-M_2u^s 
\end{equation} 
where $\ell\in\RR^d$, $\varGamma = \diag(\gamma_1,\dots,\gamma_d)$, $M_1$ is a lower-diagonal $d\times d$
matrix with positive diagonal elements, and $M_2$ is a $d\times J$ matrix.
Without loss of generality, we assume that $\gamma_1 = 0$, $\gamma_d > 0$, and $\gamma_i \ge 0$, $i\in\sI\setminus\{1,d\}$.
We consider the ergodic control problem  in \cref{E-rho*} with
\begin{equation}\label{ES3C}
\rc(x,u) \,\df\, \sum_{i\in\sI}c_i[\langle e,x \rangle^+u^c_i]^m + 
\sum_{j\in\sJ}s_j[\langle e,x \rangle^-u^s_j]^m
\end{equation} for some $m\ge1$, 
and some positive constants $\{c_i\colon i\in\sI\}$ and $\{s_j\colon j\in\sJ\}$. 
This running cost function penalizes the queue sizes and idleness.
It is evident that $\rc(x,u)$ is not near-monotone,
since $\langle e,x \rangle$ equals $0$ on a hyperplane in $\RR^d$.
We assume that $\int_{\RR^d}\abs{z}^{m}\,\nu(\D{z})<\infty$.

We define
$\cK_\delta \,\df\, \{x\in\RR^d\colon \abs{\langle e,x \rangle} > \delta\abs{x}\}$
with $\delta > 0$.
It is clear that $\rc$ is coercive on $\cK_\delta$ for $\delta > 0$.
For a positive definite symmetric matrix $Q$,
we let $g(x)$ be some positive convex smooth function which 
agrees with $\langle x,Qx\rangle^{\nicefrac{1}{2}}$ on $B_1^c$,
and define the function $\sV_{Q,k}(x) = \bigl(g(x)\bigr)^k$
for $k >0$.

%%%%%%%%%%%%%%%%%%%%%%%%%%%%%%%%%%%%%%%%%%%%%%%%%%%%%%%%%%%%%%%%%%%%%%%%%%%%%%%%
\begin{lemma}\label{L3.1}
There exist a diagonal matrix $Q$,
some $\delta>0$ small enough, 
and a positive constant $C$ such that
$\sV_\circ = \sV_{Q,m}$ and $F(x) = C\abs{x}^m$
satisfy \cref{A2.1} with $\cK=\cK_\delta$.
\end{lemma}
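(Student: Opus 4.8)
The plan is to verify the two inequalities in \cref{EA2.1A} directly, using the explicit form of the drift \cref{ES3B}, the running cost \cref{ES3C}, and the candidate functions $\sV_\circ = \sV_{Q,m}$ and $F(x)=C\abs{x}^m$, while exploiting the freedom in choosing the diagonal matrix $Q$ and the parameter $\delta$. First I would compute $\Ag_u\sV_\circ(x)$ on $B_1^c$, where $\sV_\circ(x) = \langle x,Qx\rangle^{m/2}$. Writing $h(x) = \langle x,Qx\rangle^{1/2}$, the second-order and first-order terms give, to leading order in $\abs{x}$, a contribution of the form $\frac{m}{2}\, h(x)^{m-2}\bigl(2\langle Qx, \widetilde b(x,u)\rangle + \text{lower order}\bigr)$, plus the nonlocal term $\int_{\Rd}\bigl(\sV_\circ(x+y)-\sV_\circ(x)\bigr)\nu(\D y)$; since $\nu$ has compact support in the ray $\{\theta t\colon t\ge0\}$ and $\int\abs{z}^m\nu(\D z)<\infty$, a Taylor expansion shows the nonlocal term is $\order\bigl(h(x)^{m-1}\bigr)$ (or $\order(h^{m-2})$ after the martingale-compensation already built into $\widetilde b$), hence dominated by the drift term. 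The heart of the matter is therefore the sign of $\langle Qx, b(x,u)\rangle$ for large $\abs{x}$.

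Next I would split according to the sign of $\langle e,x\rangle$. On the region $\langle e,x\rangle > 0$ the dominant part of $b(x,u)$ is $-M_1 x + \langle e,x\rangle\bigl(M_1 u^c - \varGamma u^c\bigr)$, and on $\langle e,x\rangle<0$ it is $-M_1 x + \langle e,x\rangle M_1 u^c - \langle e,x\rangle M_2 u^s$; in both cases the genuinely stabilizing term is $-M_1 x$. The key algebraic step, exactly as in \cite{AP16a} and \cite{ABP15}, is to choose the diagonal matrix $Q$ so that $Q M_1 + M_1\transp Q$ is positive definite — this is possible because $M_1$ is lower-triangular with strictly positive diagonal, so a suitable diagonal $Q$ with rapidly decreasing entries makes the symmetrized matrix diagonally dominant and positive definite. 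With such a $Q$, one gets $\langle Qx, b(x,u)\rangle \le -c_0\abs{x}^2 + C_1\abs{x}\bigl(1+\abs{\langle e,x\rangle}\bigr)$ uniformly in $u\in\Act$ (using compactness of $\Act$ and boundedness of $u^c,u^s$). Substituting back, $\Ag_u\sV_\circ(x) \le h(x)^{m-2}\bigl(-c_0 m\abs{x}^2 + C_1' \abs{x}(1+\abs{\langle e,x\rangle}) + C_2' h(x)\bigr)$; since $h(x)\asymp\abs{x}$ this is $\le -c\abs{x}^m$ for $\abs{x}$ large, which gives the first inequality of \cref{EA2.1A} with $F(x) = C\abs{x}^m$ on $\cK_\delta^c$ — wait, I need this on $\cK_\delta^c$, i.e. where $\abs{\langle e,x\rangle}\le\delta\abs{x}$, which is precisely the favorable case since then the cross term $\abs{x}\abs{\langle e,x\rangle}\le\delta\abs{x}^2$ can be absorbed into $-c_0 m\abs{x}^2$ by taking $\delta$ small. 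Outside a large ball $\sB_\circ$ this yields $\Ag_u\sV_\circ(x)\le -C\abs{x}^m$; enlarging $\sB_\circ$ and adjusting $C$ handles the indicator.

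For the second inequality of \cref{EA2.1A}, which must hold on $\cK_\delta\times\Act$, I would argue more crudely: on all of $\Rd$ one has $\Ag_u\sV_\circ(x) \le C_3(1+\abs{x}^m)$ from the affine growth of $b$ and the moment bound on $\nu$, so it suffices to dominate $C_3(1+\abs{x}^m)$ by $\Ind_{\sB_\circ}(x) + \rc(x,u)$ on $\cK_\delta$. But on $\cK_\delta$ we have $\abs{\langle e,x\rangle} > \delta\abs{x}$, and $\rc(x,u) = \sum_i c_i[\langle e,x\rangle^+ u^c_i]^m + \sum_j s_j[\langle e,x\rangle^- u^s_j]^m$; since $\sum_i u^c_i = \sum_j u^s_j = 1$, either $\max_i u^c_i \ge 1/d$ or $\max_j u^s_j\ge 1/J$, so $\rc(x,u)\ge c_{\min}(\langle e,x\rangle^\pm)^m d^{-m} \ge c_{\min} d^{-m}\delta^m\abs{x}^m$ on $\cK_\delta$ (choosing the sign that matches). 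Hence for $\abs{x}$ large, $\rc(x,u)$ itself dominates $C_3(1+\abs{x}^m)$ up to a constant, and after rescaling $\sV_\circ$ and $F$ by a common constant (allowed by the remark following \cref{A2.1}) and absorbing the compact piece into $\Ind_{\sB_\circ}$, both inequalities hold simultaneously. The main obstacle I anticipate is bookkeeping: making the choice of $Q$, then $\delta$, then the ball $\sB_\circ$ and the constant $C$ in the right order so that all estimates are uniform over the compact action space $\Act$ and the leading-order coefficients genuinely have the claimed signs; the nonlocal term requires a little care but is harmless because $\nu$ is finite with a finite $m$-th moment. I would also double-check that $\sV_{Q,m}$ and $F$ are coercive and nonnegative, which is immediate since $Q$ is positive definite and $m\ge1$, and that $F$ is locally Lipschitz, which holds for $C\abs{x}^m$ when $m\ge1$.
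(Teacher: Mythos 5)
Your proposal is correct and follows essentially the same route as the paper's proof: choose a diagonal $Q$ so that $QM_1+M_1\transp Q$ is positive definite (making the $-M_1x$ part of the drift dominant), split into the regions $\cK_\delta$ and $\cK_\delta^c$ to obtain the two inequalities of \cref{EA2.1A}, and control the nonlocal term via the finite $m$-th moment of $\nu$. The paper is terser (it delegates the drift computation to \cite[Theorem~4.1]{AP16a} and states the nonlocal bound in the form $C_2+\epsilon\langle x,Qx\rangle^{m/2}$), but the substance is identical to what you wrote out.
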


\begin{proof}
Recall $\widetilde{b}$ defined in \cref{E-Ag}. 
Following the same calculation as in the proof of \cite[Theorem 4.1]{AP16a},
we obtain
\begin{equation*}
\bigl\langle \widetilde{b}(x,u),\grad \sV_{Q,m}(x)\bigr\rangle \,\le\,
\begin{cases}
C_1 - m \langle x,Qx \rangle^{\nicefrac{m}{2}-1}\abs{x}^2 &
\forall\,(x,u)\in\cK_\delta^c\times\Act\,,\\[5pt]
C_1 \bigl(1 + \abs{\langle e,x \rangle}^m\bigr)
& \forall\,(x,u)\in\cK_\delta\times\Act
\end{cases} 
\end{equation*}
for some $\delta>0$, a positive constant $C_1$,
and a diagonal matrix $Q$ satisfying
$x\transp(QM_1 + M_1\transp Q)x \ge 8 \abs{x}$.
On the other hand, using the hypothesis
$\int_{\RR^d}^{}\abs{z}^m\,\nu(\D{z}) <\infty$,
we obtain
\begin{equation}\label{PL3.1A}
\begin{aligned}
\int_{\RR^d}\bigl(\sV_{Q,m}(x + z) - \sV_{Q,m}(x)\bigl)\,\nu(\D{z})
&\,=\,
\int_{\RR^d} \int_0^{1}
\bigl\langle z, \grad \sV_{Q,m}(x + tz)\bigr\rangle\,\D{t}\,\nu(\D{z}) \\
&\,\le\, C_2 + \epsilon \langle x,Qx \rangle^{\nicefrac{m}{2}}
\end{aligned}
\end{equation}
for some $\epsilon>0$ sufficiently small, 
and a positive constant $C_2$.
Thus \cref{EA2.1A} holds. This completes the proof.
\end{proof}

%%%%%%%%%%%%%%%%%%%%%%%%%%%%%%%%%%%%%%%%%%%%%%%%%%%%%%%%%%%%%%%%%%%%%%%%%%%%%%%%
\begin{remark}
Let $\Tilde{\ell} \df \ell + \int_{R^d}z\,\nu(\D{z})$ and $u^c_1 = 1$,
and suppose that $\langle e,(M_1^{-1})\transp\Tilde{\ell} \rangle > 0$.
Using the leaf elimination algorithm as in \cite[Theorem 4.2]{AP16a},
we obtain a constant control $\bar{u} = (\Bar{u}^c,\Bar{u}^s) \in\Act$,
with $\Bar{u}^c_1 = 1$, such that the last two terms on the right hand side of
\cref{ES3B} are equal to $0$.
This implies that $\process{X}$ is transient under the control $\Bar{u}$
by \cite[Theorem 3.1]{APS19}.
Therefore, \cref{ES3A} is not uniformly stable.

Recall $\widetilde\Lg$ and $\widetilde{\cI}$ defined in \cref{E-cI}.
By \cite[Theorem 4.2]{AP16a} concerning the local operator $\widetilde\Lg$,
and \cref{PL3.1A} for $\widetilde{\cI}$,
it follows that there exist $u = (u^c,u^s)\in\Act$ with $u^c_d = 1$, and 
$\sV(x)\sim\langle x,\tilde{Q}x \rangle^{\nicefrac{m}{2}}$ for some 
diagonal positive matrix $\tilde{Q}$
satisfying \cref{A2.2}.
\end{remark}

We present two specific examples: the `W' and `V' networks. 

%%%%%%%%%%%%%%%%%%%%%%%%%%%%%%%%%%%%%%%%%%%%%%%%%%%%%%%%%%%%%%%%%%%%%%%%%%%%%%%%
\begin{example}
(The `W' model with service interruptions.)
See \cite[Section 4.2]{AP16a} for the detailed definition of the `W' model.
We have $\sI = \{1,2,3\}$ and $\sJ = \{1,2\}$.
By \cite[Example 4.2]{AP16a}, $M_1$ and $M_2$ in \cref{ES3B} are given by
\begin{equation*}
M_1 \,=\, \begin{bmatrix}
\mu_{11} & 0 & 0 \\
\mu_{22}-\mu_{21} & \mu_{22} & 0\\
0 & 0 & \mu_{32} 
\end{bmatrix}\,, \quad
M_2 \,=\, \begin{bmatrix}
0 & 0 \\
\mu_{21} - \mu_{22} & 0\\
0 & 0
\end{bmatrix}
\end{equation*}
for some positive constants $\{\mu_{ij}\colon i\in\sI,j\in\sJ,(i,j)\notin\{(1,2),(3,1)\}\}$.
We assume that $\gamma_1 = \gamma_2 = 0$ and $\gamma_3=1$, 
and $\langle e,(M_1^{-1})\transp\Tilde{\ell} \rangle > 0$.
By \cite[Theorem 3.1]{AHP19}, under any control $v\in\Usm$ with $v_3=0$ and $v_5 = 1$, 
$\{X_t\}_{t\ge0}$ is transient.
On the other hand,  \cref{A2.2} is satisfied
for the constant control $u^c_3 = 1$ and $u^s_2 = 1$.
\end{example}

%%%%%%%%%%%%%%%%%%%%%%%%%%%%%%%%%%%%%%%%%%%%%%%%%%%%%%%%%%%%%%%%%%%%%%%%%%%%%%%%
\begin{example}
(The `V' model with service interruptions.)
\Cref{ES3A} also describes
the limiting jump diffusions of the `V' model.
Here $\sI = \{1,\dots,d\}$, $\sJ = \{1\}$, and
\begin{equation*}
b(x,u) \,=\, \ell - M\bigl(x - \langle e,x \rangle^+u\bigr)
- \langle e,x \rangle^+\varGamma u\,,
\end{equation*}
where $u$ takes values $\Act = \{u\in\RR^d\colon \langle e,u \rangle = 1 \}$, and
$M = \diag(\mu_1,\cdots,\mu_d)$ is a positive diagonal matrix. 
Suppose that there exists a nonempty set $\sI_0\subset\{1,\cdots,d-1\}$
such that $\gamma_i = 0$ for $i\in\sI_0$, and
$\langle e,M^{-1}\tilde{\ell} \rangle > 0$.
In this case, \cite[Theorem 3.3]{APS19} asserts that $\process{X}$ is transient
under any $v\in\Ussm$ satisfying $\varGamma v = 0$.
However, \cref{A2.1} is satisfied by \cite[Remark 5.1]{APS19}, and,
provided that $\gamma_i>0$ for some $i\in\sI$, then
\cref{A2.2} holds by \cite[Theorem 3.5]{APS19}.
\end{example}

%%%%%%%%%%%%%%%%%%%%%%%%%%%%%%%%%%%%%%%%%%%%%%%%%%%%%%%%%%%%%%%%%%%%%%%%%%%%%%%
\begin{remark}
It is shown in \cite{AHP18} that the limiting diffusion of the `V' model 
without service interruptions
is uniformly ergodic over all stationary Markov controls,
if either $\varGamma >0$, or the \emph{spare capacity} $-\langle e,M^{-1}\ell \rangle$
is positive. 
This result has been extended to the limiting jump diffusion 
of the `V' model with service interruptions in \cite{AHPS19}, with the
difference that uniform ergodicity is over all stationary Markov 
controls resulting in a locally Lipschitz continuous drift.
It is also shown in \cite{AHP19} that if the spare capacity is positive, 
then the limiting diffusion of the multiclass multi-pool networks with a dominant server pool
(for example the `N' and `M' models), or class-dependent 
service rates, is uniformly exponentially
ergodic over all stationary Markov controls.
However, in general, multiclass multi-pool  
networks do not enjoy uniform ergodicity, but fall in the framework 
of \cref{A2.1,A2.2}.  	
\end{remark}

%%%%%%%%%%%%%%%%%%%%%%%%%%%%%%%%%%%%%%%%%%%%%%%%%%%%%%%%%%%%%%%%%%%%%%%%%%%%%%%
\section{Existence of an optimal stationary Markov control}\label{S4}

In this section
we establish the existence of an optimal stationary Markov control
by following a standard convex analytic argument.
We adopt the relaxed control framework (see, e.g., \cite{ABG12}*{Section~2.3}),
and extend the definitions of $b$ and $\rc$ accordingly, that is we let
$b_v(x) = \int_{\Act}b(x,u)\,v(\D{u}\,|\,x)$, where $v(x) = v(\D{u}\,|\,x)$ 
is a measurable kernel on $\Act$ given $x$, and analogously for $\rc$.
Let $\mu_v\in\cP(\RR^d)$ denote the unique invariant probability measure of
\cref{E-sde} under $v\in\Ussm$. 
Define the corresponding
\emph{ergodic occupation measure} $\uppi_v \in \cP(\RR^d\times\Act)$ by 
$\uppi_v(\D{x},\D{u}) \df \mu_v(\D{x})\,v(\D{u}\,|\,x)$. 
The class of all ergodic occupation measures is denoted by $\eom$.
Let $\Cc_0^2(\RR^d)$ denote the Banach space of functions $f\colon\Rd\to\RR$
that are twice continuously differentiable and their derivatives up to
second order vanish at infinity, and $\sC$ denote some fixed dense subset of
$\Cc_0^2(\RR^d)$ consisting of functions with compact supports.
Applying the Theorem~in \cite{PE82}, it follows that $\uppi\in\eom$ if and only if
\begin{equation*}
\int_{\RR^d}^{}\Ag_u f(x)\,\uppi(\D{x},\D{u}) \,=\, 0 \qquad\forall\,f\in\sC\,.
\end{equation*}
It is easy to show that $\eom$ is a closed and convex subset of $\cP(\RR^d\times\Act)$ 
(see, e.g., \cite{ABG12}*{Lemma~3.2.3}).

Recall also the definition of empirical measures.

%%%%%%%%%%%%%%%%%%%%%%%%%%%%%%%%%%%%%%%%%%%%%%%%%%%%%%%%%%%%%%%%%%%%%%%%%%%%%%%%
\begin{definition}\label{D4.1}
For $U \in \Uadm$ and $x \in \RR^d$, we define the mean empirical measures
$\{\Bar{\zeta}^U_{x,t}\,\colon t>0\}$, and (random) empirical measures
$\{\zeta^U_{t}\,\colon t>0\}$ by
\begin{equation*}
\Bar{\zeta}^U_{x,t}(f)\,=\,
\int_{\RR^d\times\Act}f(x,u)\,\Bar{\zeta}^U_{x,t}(\D{x},\D{u})
\,\df\, \frac{1}{t}\int_{0}^{t}
\Exp_x^U\biggl[\int_{\Act} f(X_s,u)\,U_s(\D{u})\biggr]\,\D{s}\,,
\end{equation*}
and
\begin{equation*}
\zeta^U_{t}(f)\,=\,
\int_{\RR^d\times\Act}f(x,u)\,\D \zeta^U_{t}(\D{x},\D{u})\,\df\, \frac{1}{t} 
\int_0^t\int_{\Act} f(X_s,u)\,U_s(\D{u})\,\D s\,,
\end{equation*}
respectively, for all $f\in\Cc_b(\RR^d\times\Act)$.
\end{definition}

Let $\overline{\RR}^{d}$ denote the one-point compactification of $\RR^{d}$.
Then as shown in \cite{ACPZ19}*{Lemma~4.2},
every limit $\Hat{\zeta} \in  \cP(\overline{\RR}^d\times\Act)$
of $\zeta^Z_{x,t}$ as $t\to\infty$ takes the form 
$\Hat{\zeta}= \delta \zeta' + (1-\delta) \zeta''$ for some $\delta \in [0,1]$,
with $\zeta' \in \eom$ and $\zeta'' (\{\infty\} \times \Act) =1$ almost surely.
The same claim  holds for the mean empirical measures,
without the qualifier `almost surely'.

We borrow the technique introduced in \cite{ABP15}.
Recall the function $F$ and the set $\cK$ in \cref{A2.1}.
First, define the set
\begin{equation*}
\widetilde{\cK} \,\df\,
(\cK\times\Act)\cup\bigl\{(x,u)\in\RR^d\times\Act\,\colon\rc(x,u) > F(x,u)\bigr\}\,.
\end{equation*}
We have
\begin{equation}\label{E4.1A}
\Ag_u\Vo(x,u) \,\le\, \Ind_{\sB_{\circ}}(x,u) - F(x,u)\Ind_{{\widetilde{\cK}^c}}(x,u)
+ \rc(x,u)\Ind_{{\widetilde{\cK}}}(x,u)\,, \quad \forall (x,u)\in\RR^d\times\Act\,.
\end{equation}
As shown in \cite{ABP15}*{Lemma~3.3}, there exists a coercive
function $\widetilde{F}\in\Cc(\RR^d\times\Act)$,
which is locally Lipschitz in its first argument,
and satisfies
\begin{equation}\label{E-tildF}
\rc \,\le\, \widetilde{F} \,\le\, \Tilde{\kappa}\, \bigl(\Ind_{\sB_\circ}
+ \rc\,\Ind_{\widetilde{\cK}} + F\,\Ind_{{\widetilde{\cK}^c}}\bigr)
\end{equation}
for some positive constant $\Tilde{\kappa}\ge1$.
Here again we select the same ball $\sB_\circ$ as in \cref{A2.1} for convenience.
This can always be accomplished by adjusting the constant $\Tilde\kappa$.

Define the perturbed running cost
$\rc^{\epsilon} \df \rc + \epsilon\widetilde{F}$.
Since $\rc^\epsilon$ is coercive for $\epsilon>0$,
the results of \cite{ACPZ19} are applicable for the ergodic control
problem with the perturbed running cost.
At the same time, it follows from \cref{E4.1A,E-tildF} and
the argument in the proof of \cite[Theorem~3.1]{ABP15},   
that if a control $U\in\Uadm$ is stabilizing for $\rc$,  then it is also
stabilizing for $\rc^\epsilon$ for any $\epsilon>0$.

%%%%%%%%%%%%%%%%%%%%%%%%%%%%%%%%%%%%%%%%%%%%%%%%%%%%%%%%%%%%%%%%%%%%%%%%%%%%%%%
\begin{theorem}\label{T4.1}
Grant \cref{A2.1}.
Then every stabilizing stationary Markov control is in $\Ussm$. 
In addition, if the stabilizability hypothesis in \cref{E-stab}
is met, then
there exists a stationary Markov control which is optimal
for the ergodic control problem, and $\varrho_*$ is a constant.
\end{theorem}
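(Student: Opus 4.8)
The plan is to prove \cref{T4.1} in two parts, corresponding to its two assertions, using the convex analytic framework set up above together with the perturbed running cost $\rc^\epsilon = \rc + \epsilon\widetilde F$.

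For the first assertion, suppose $v\in\Usm$ is stabilizing, i.e.\ $\varrho_v(x)<\infty$ for all $x$. The goal is to show $v\in\Ussm$, i.e.\ the diffusion \cref{E-diffusion} under $v$ (equivalently \cref{E-sde} under $v$) is positive recurrent. First I would invoke the remark preceding the theorem: since $v$ is stabilizing for $\rc$, it is stabilizing for $\rc^\epsilon$ as well, and $\rc^\epsilon \ge \epsilon\widetilde F$ with $\widetilde F$ coercive. Thus $\limsup_{T\to\infty}\frac1T\Exp_x^v[\int_0^T \widetilde F(X_t,v(X_t))\,\D t] < \infty$ with $\widetilde F$ inf-compact. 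I would then feed this into the structure of limits of mean empirical measures $\Bar\zeta^v_{x,t}$ recalled just before \cref{D4.1}: every subsequential limit $\Hat\zeta$ decomposes as $\delta\zeta' + (1-\delta)\zeta''$ with $\zeta'\in\eom$ and $\zeta''$ concentrated at $\infty$. Coercivity of $\widetilde F$ together with the uniform bound on its running average forces $\delta = 1$ (otherwise mass escapes to infinity and the average of $\widetilde F$ blows up, using a Fatou/lower-semicontinuity argument on $\overline\RR^d$). Hence the mean empirical measures are tight and converge (along subsequences) to an ergodic occupation measure $\zeta'\in\eom$; since $\zeta'$ must be of the form $\uppi_{v}$ for the given stationary $v$ (the occupation measure is determined by $v$), the diffusion under $v$ admits an invariant probability measure, which for a nondegenerate diffusion is equivalent to positive recurrence. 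This gives $v\in\Ussm$.

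For the second assertion, assume \cref{E-stab}: there exist $x_0$ and $U_0\in\Uadm$ with $\varrho_{U_0}(x_0) < \infty$. I would first upgrade this to the existence of a \emph{stationary} control with finite ergodic cost by the same mean-empirical-measure argument: the averages of $\widetilde F$ under $U_0$ from $x_0$ are bounded, so a subsequential limit of $\Bar\zeta^{U_0}_{x_0,t}$ has a nontrivial $\eom$-component, and by the coercivity/no-escape argument (as above) it lies entirely in $\eom$; hence $\eom\ne\emptyset$ and moreover contains a measure $\uppi$ with $\uppi(\widetilde F) < \infty$, hence $\uppi(\rc^\epsilon)<\infty$. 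Next, for each $\epsilon>0$, since $\rc^\epsilon$ is coercive the results of \cite{ACPZ19} apply: there is an optimal stationary Markov control $v_\epsilon$ for the $\rc^\epsilon$-ergodic problem, with constant value $\varrho^\epsilon_* = \inf_{\uppi\in\eom}\uppi(\rc^\epsilon) = \uppi_{v_\epsilon}(\rc^\epsilon)$. Then I would let $\epsilon\downarrow 0$: the values $\varrho^\epsilon_*$ are nonincreasing and bounded by $\uppi(\rc^\epsilon)\to\uppi(\rc)$, so they converge. Using that $\rc \le \widetilde F \le \rc^\epsilon/\epsilon$ off $\sB_\circ$ — more precisely using the moment bound $\uppi_{v_\epsilon}(\widetilde F)\le \varrho^\epsilon_*/\epsilon$ is too weak, so instead I would use $\uppi_{v_\epsilon}(\rc) \le \varrho^\epsilon_* \le \varrho^{\epsilon_0}_*$ for $\epsilon\le\epsilon_0$ together with $\uppi_{v_\epsilon}(\epsilon\widetilde F)\le\varrho^\epsilon_*$ — one gets that the family $\{\uppi_{v_\epsilon}\}_{0<\epsilon\le\epsilon_0}$ has uniformly bounded $\widetilde F$-integral only after fixing $\epsilon$ away from $0$; the cleaner route is: fix any $\uppi_0\in\eom$ with $\uppi_0(\widetilde F)<\infty$, note $\varrho^\epsilon_*\le\uppi_0(\rc)+\epsilon\uppi_0(\widetilde F)$, and then $\uppi_{v_\epsilon}(\widetilde F)\le\tfrac1\epsilon\varrho^\epsilon_*$ is useless but $\uppi_{v_\epsilon}(\rc)\le\varrho^\epsilon_*$ is bounded — to get tightness I instead apply the Lyapunov inequality \cref{EA2.1A}/\cref{E4.1A}: integrating $\Ag_u\Vo$ against $\uppi_{v_\epsilon}$ gives $0\le \uppi_{v_\epsilon}(\Ind_{\sB_\circ}) - \uppi_{v_\epsilon}(F\Ind_{\widetilde\cK^c}) + \uppi_{v_\epsilon}(\rc\Ind_{\widetilde\cK})$, and since $\uppi_{v_\epsilon}(\rc)$ is bounded uniformly in small $\epsilon$, this bounds $\uppi_{v_\epsilon}(F\Ind_{\widetilde\cK^c})$ and hence, via \cref{E-tildF}, $\uppi_{v_\epsilon}(\widetilde F)$ uniformly. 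So $\{\uppi_{v_\epsilon}\}$ is tight; extract a weak limit $\uppi_* = \lim_{\epsilon_n\downarrow0}\uppi_{v_{\epsilon_n}}$. Closedness of $\eom$ gives $\uppi_*\in\eom$, so $\uppi_* = \uppi_{v_*}$ for some $v_*\in\Ussm$. Lower semicontinuity of $\uppi\mapsto\uppi(\rc)$ (nonnegative continuous integrand) and $\uppi_{v_{\epsilon_n}}(\rc)\le\varrho^{\epsilon_n}_*\to\lim\varrho^\epsilon_*\ge\inf_{\eom}\uppi(\rc)$ gives $\uppi_{v_*}(\rc)\le\inf_{\uppi\in\eom}\uppi(\rc)$, so $v_*$ attains $\inf_{\eom}\uppi(\rc)$.

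Finally I would identify this occupation-measure optimum with the ergodic control optimum $\varrho_*(x)$ of \cref{E-rho*}, uniformly in $x$. For the lower bound $\varrho_U(x)\ge\inf_{\eom}\uppi(\rc)$ for every $U\in\Uadm$: if $\varrho_U(x)=\infty$ there is nothing to prove, and otherwise the mean empirical measures $\Bar\zeta^U_{x,t}$ have bounded $\widetilde F$-average (by the stabilizing-implies-stabilizing-for-$\rc^\epsilon$ remark plus $\widetilde F\le\rc^{\epsilon_0}/\epsilon_0$ off $\sB_\circ$), hence any limit lies in $\eom$ by the no-escape argument, and $\liminf \Bar\zeta^U_{x,t}(\rc)\ge\uppi(\rc)\ge\inf_{\eom}\uppi(\rc)$ by lower semicontinuity. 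For the upper bound, the stationary control $v_*$ gives $\varrho_{v_*}(x)=\uppi_{v_*}(\rc)$ for every $x$ by the ergodic theorem for the positive recurrent diffusion under $v_*$. Combining, $\varrho_*(x)=\uppi_{v_*}(\rc)=\inf_{\uppi\in\eom}\uppi(\rc)$ is independent of $x$, $v_*$ is optimal, and $\varrho_*$ is the claimed constant. The main obstacle is the tightness/no-escape step — ensuring that along the $\epsilon\downarrow0$ limit (and along $t\to\infty$ for arbitrary admissible $U$) no mass escapes to infinity; this is exactly where \cref{A2.1}, through the coercive Lyapunov function $\Vo$ and the auxiliary coercive $\widetilde F$ of \cref{E-tildF}, does the work, and it must be combined carefully with the structure of limits of empirical measures from \cite{ACPZ19}*{Lemma~4.2}.
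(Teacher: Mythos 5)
Your proposal is correct and follows essentially the same convex-analytic strategy as the paper: introduce the coercive perturbation $\rc^\epsilon = \rc + \epsilon\widetilde F$, exploit the key bound (the paper's display (PT4.1A), equivalently your remark-based observation that stabilizing for $\rc$ implies stabilizing for $\rc^\epsilon$), and use coercivity of $\widetilde F$ to force tightness of mean empirical measures so that limits land in $\eom$. The one place where the execution genuinely diverges is the proof that $\varrho_* = \inf_{\uppi\in\eom}\uppi(\rc)$ and that this infimum is attained. The paper argues via the two-sided inequality chain $\varrho_* \le \Hat\varrho_* \le \Hat\varrho^\epsilon_* = \varrho^\epsilon_* \le \varrho_* + \epsilon\Tilde\kappa(1 + 2\varrho_*)$, where the middle equality $\Hat\varrho^\epsilon_* = \varrho^\epsilon_*$ and the existence of an $\rc^\epsilon$-optimal stationary control are imported as black boxes from \cite{ACPZ19} and \cite{ABG12} (valid because $\rc^\epsilon$ is coercive), after which the attainment of $\Hat\varrho_*$ is deferred to ``the technique of [ABG12, Theorem~3.4.5].'' You instead construct the optimal control explicitly as a weak limit of $\uppi_{v_\epsilon}$ along $\epsilon\downarrow 0$, establishing tightness of $\{\uppi_{v_\epsilon}\}$ by integrating the Lyapunov inequality \cref{E4.1A} against $\uppi_{v_\epsilon}$, and then prove $\varrho_*(x) = \inf_{\eom}\uppi(\rc)$ by separate lower- and upper-bound arguments on empirical measures. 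Your route is more self-contained and makes the tightness mechanism visible, at the cost of extra bookkeeping; the paper's is shorter because it leverages the $\rc^\epsilon$ theory already developed. One point to tighten in your first assertion: the statement ``$\zeta'$ must be of the form $\uppi_v$ for the given stationary $v$'' presupposes $\uppi_v$ exists; what the argument actually yields is that the tight subsequential limit of $\Bar\zeta^v_{x,t}$ is an invariant probability measure for the process under $v$ (via the Echeverr\'ia-type characterization), whose existence is then what implies positive recurrence for this nondegenerate diffusion. Also, the step where you pass the ergodic-occupation-measure identity $\uppi_{v_\epsilon}(\Ag_{v_\epsilon}\Vo)=0$ with the unbounded test function $\Vo$ needs a truncation/monotone-convergence argument, though this is standard and the paper elides it too.
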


\begin{proof}
By \cref{A2.1}, we have $\widetilde{\cI}\sV_\circ \in \Lpl^{\infty}(\RR^d)$, 
and thus, applying It\^o's formula and Fatou's lemma to
\cref{E4.1A}, it follows by \cref{E-tildF} that
\begin{equation}\label{PT4.1A}
\Bar{\zeta}^U_{x,t}(\rc^\epsilon)
\,\le\, \Bar{\zeta}^U_{x,t}(\rc) + \epsilon\Tilde\kappa\biggl(1 + \frac{1}{t}\Vo(x)
+ 2\Bar{\zeta}^U_{x,t}(\rc)\biggr)
\qquad\forall\,(x,t)\in\Rd\times(0,\infty)\,,\ \forall\,U\in\Uadm\,.
\end{equation}
Since by \cref{PT4.1A} we have
\begin{equation}\label{PT4.1B}
\uppi_v(\rc^\epsilon)\,\le\,\varrho_v
+\epsilon \Tilde\kappa(1+2\varrho_v)
\end{equation}
for any stabilizing stationary Markov control $v$, we have $\uppi_v(\rc) <\infty$,
and the first assertion follows.

Define $\varrho^\epsilon_U$ and
$\varrho^\epsilon_*$ as in \cref{E-rhoU,E-rho*}, respectively, by
replacing $\rc$ with ${\rc}^\epsilon$.
Let $\Hat\varrho^\epsilon_*\df\inf_{\uppi\in\eom}\,\uppi(\rc^\epsilon)$,
and $\Hat\varrho_*\df\inf_{\uppi\in\eom}\,\uppi(\rc)$.
Since
$\rc^\epsilon$ is coercive for any $\epsilon\in(0,1)$, we have
 $\Hat\varrho^\epsilon_*=\uppi_{v_*^\epsilon}(\rc^\epsilon)$ for some
$v_*^\epsilon\in\Ussm$ by \cite{ABG12}*{Theorem~3.4.5}, and
$\Hat\varrho^\epsilon_*=\varrho^\epsilon_*$ by 
Lemma~4.2 in \cite{ACPZ19}  and the proof of \cite{ABG12}*{Theorem~3.4.7}.
Hence, by \cref{PT4.1A}, which implies
that  $\varrho^\epsilon_U\,\le\,\varrho_U +\epsilon \Tilde\kappa(1+2\varrho_U)$,
and the above definitions we have
\begin{equation*}
\varrho_*\,\le\,\Hat\varrho_*\,\le\,\Hat\varrho^\epsilon_* \,=\, \varrho^\epsilon_*
\,\le\, \varrho_* +\epsilon \Tilde\kappa(1+2\varrho_*)
\qquad\forall\,\epsilon\in(0,1) \,.
\end{equation*}
This shows that $\varrho_*=\Hat\varrho_*$.
It remains to show that $\varrho_*=\uppi_{v_*}(\rc)$ for some $v_*\in\Ussm$.
But this follows by using the technique in the proof of  \cite{ABG12}*{Theorem~3.4.5}.
This completes the proof.
\end{proof}

%%%%%%%%%%%%%%%%%%%%%%%%%%%%%%%%%%%%%%%%%%%%%%%%%%%%%%%%%%%%%%%%%%%%%%%%%%%%%%%
\section{The HJB equations}\label{S5}

In this section, we study the $\alpha$-discounted and ergodic HJB equations 
for the jump diffusion defined in \cref{E-sde}.
For the $\alpha$-discounted control problem,  
it is rather standard to establish the existence of solutions and 
the characterization of optimal controls
(see \cref{T5.1} below for details).
We consider the Dirichlet problem on $B_R$
for the $\alpha$-discounted
problem with running cost $\rc^{\epsilon}$.
From \cite{BenLi-84}*{Chap.~3, Theorem~2.3 and Remark~2.3},
there exists a unique solution 
$\psi^{\epsilon}_{\alpha,R}\in\Sob^{2,p}(B_R)\cap\Sob^{1,p}_0(B_R)$
to the (homogeneous) Dirichlet problem
\begin{equation}\label{E-dir}
\min_{u\in\Act}\,\bigl[\Ag_u\psi^{\epsilon}_{\alpha,R}
+ \rc^{\epsilon}(\cdot,u)\bigr] \,=\,
\alpha\psi^{\epsilon}_{\alpha,R} \quad \text{in } B_R\,, \quad \text{and} \quad
\psi^{\epsilon}_{\alpha,R} \,=\, 0 \quad \text{in } B^{c}_R\,.
\end{equation} 
For the Dirichlet problem with a linear integro-differential operator,
existence and uniqueness of a solution are also asserted in
\cite{GM02}*{Theorem~3.1.22}. 
Meanwhile, for a bounded running cost
function \cite{Menaldi-99}*{(1.26)} and
under the blanket stability assumption in \cite{Menaldi-99}*{(1.6)},
HJB equations on the whole space are established in
\cite{Menaldi-99}*{Remark~3.3 and Theorem~4.1}. 
It is clear that these assumptions are not met for multiclass stochastic networks in
the Halfin--Whitt regime. 
For example, in \cref{ES3C}, the running cost function penalizing the queueing and 
idleness is unbounded, and the drift in \cref{ES3A}
does not satisfy \cite{Menaldi-99}*{(1.6)}.

%%%%%%%%%%%%%%%%%%%%%%%%%%%%%%%%%%%%%%%%%%%%%%%%%%%%%%%%%%%%%%%%%%%%%%%%%%%%%%%%
\begin{theorem}\label{T5.1}
Grant \cref{A2.1,A2.2}.
Then for any $\alpha \in (0,1)$ and $\epsilon\in[0,\Tilde{\kappa}^{-1})$,
the function $\psi^\epsilon_{\alpha,R}$ in \cref{E-dir} converges uniformly on
compacta to a function $V^{\epsilon}_\alpha\in\Sobl^{2,p}(\RR^d)$ for any $p>1$,
which is the minimal nonnegative solution of the HJB equation
\begin{equation}\label{ET5.1A}
\min_{u\in\Act}\,\bigl[\Ag_u V^{\epsilon}_{\alpha}(x) + \rc^{\epsilon}(x,u)\bigr] \,=\, 
\alpha V^{\epsilon}_{\alpha}(x)\quad \text{a.e.\ in\ } \Rd\,,
\end{equation}
and has the stochastic representation
\begin{equation}\label{ET5.1B}
V^{\epsilon}_{\alpha}(x) \,=\, \inf_{U\in\Uadm}\,
\Exp^U_x\biggl[\int_0^{\infty}\E^{-\alpha t}\rc^{\epsilon}(X_t,U_t)\,\D{t}\biggr]\,.
\end{equation}
In addition, a control $v\in\Usm$ is optimal, that is, it attains the infimum
in \cref{ET5.1B}, if and only if 
it is an a.e.\ measurable selector from the minimizer of \cref{ET5.1A}.
\end{theorem}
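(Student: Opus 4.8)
The plan is to run the standard vanishing-truncation argument, using \cref{A2.1,A2.2} to obtain a priori bounds on $\psi^\epsilon_{\alpha,R}$ that are uniform in $R$, and then pass to the limit $R\to\infty$ along a subsequence. First I would record that $\psi^\epsilon_{\alpha,R}\ge0$ by the maximum principle, and that $R\mapsto\psi^\epsilon_{\alpha,R}$ is nondecreasing: comparing the Dirichlet problems on $B_R\subset B_{R'}$ and using that $\psi^\epsilon_{\alpha,R'}\ge0=\psi^\epsilon_{\alpha,R}$ on $B_R^c$ together with the (sub)solution comparison principle for the nonlocal operator $\Ag_u$ on $B_R$. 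So the pointwise limit $V^\epsilon_\alpha(x)\df\lim_{R\to\infty}\psi^\epsilon_{\alpha,R}(x)$ exists in $[0,\infty]$. The key quantitative step is to show this limit is finite and locally bounded: using the It\^o formula \cref{E-Ito} applied to the solution $\psi^\epsilon_{\alpha,R}$ on $B_R$ and optional stopping at $\uptau_R$, one gets the stochastic representation on the truncated domain,
\begin{equation*}
\psi^\epsilon_{\alpha,R}(x)\,=\,\inf_{U\in\Uadm}\,\Exp^U_x\biggl[\int_0^{\uptau_R}\E^{-\alpha t}\rc^\epsilon(X_t,U_t)\,\D{t}\biggr]\,,
\end{equation*}
and then evaluating the right-hand side along the stable control $\Hat v$ of \cref{A2.2}. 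Indeed, \cref{EA2.2A} gives $\Ag_{\Hat v}\sV\le\Hat\kappa\Ind_{\sB_\circ}-\rc_{\Hat v}\le\Hat\kappa\Ind_{\sB_\circ}-\rc^\epsilon_{\Hat v}/(1+\epsilon\Tilde\kappa)$ once we absorb $\epsilon\widetilde F$ into $\rc$ via \cref{E-tildF} (this is where $\epsilon<\Tilde\kappa^{-1}$ enters; one shows a Foster--Lyapunov inequality of the form $\Ag_{\Hat v}\sV\le c_1\Ind_{\sB_\circ}-c_2\rc^\epsilon_{\Hat v}$ with $c_2>0$). Applying the It\^o formula to $\sV$ under $\Hat v$, dropping the discount factor, and letting $R\to\infty$ yields $V^\epsilon_\alpha(x)\le \sV(x)+C$ for a constant $C$ depending on $\sB_\circ$, $\Hat\kappa$; in particular $V^\epsilon_\alpha$ is finite and locally bounded.

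With the uniform local bound in hand, standard interior $\Sob^{2,p}$ estimates for the nonlocal Bellman equation — exactly as in \cite{ACPZ19}, using that the nonlocal term $\widetilde\cI\psi^\epsilon_{\alpha,R}$ is controlled in $\Lpl^d$ because $\nu$ is finite and $\psi^\epsilon_{\alpha,R}$ is uniformly locally bounded — give, on any ball $B_r$ and any $R>r+1$, a bound $\norm{\psi^\epsilon_{\alpha,R}}_{\Sob^{2,p}(B_r)}\le C(r,p)$ independent of $R$. Hence along a subsequence $\psi^\epsilon_{\alpha,R}\to V^\epsilon_\alpha$ weakly in $\Sobl^{2,p}$ and strongly in $\Cc^{1}_{\mathrm{loc}}$; the monotone convergence already identifies the limit, so the whole family converges. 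Passing to the limit in \cref{E-dir} — using dominated convergence for the nonlocal term (dominated by the finite measure $\nu$ times the local sup bound) and stability of the pointwise $\min_{u\in\Act}$ under uniform convergence of the integrand in $(x,u)$ — shows $V^\epsilon_\alpha$ solves \cref{ET5.1A} a.e. The stochastic representation \cref{ET5.1B} follows by applying the It\^o formula to $V^\epsilon_\alpha$ on $B_R$ (justified since $V^\epsilon_\alpha\in\Sobl^{2,d}$ and $\widetilde\cI|V^\epsilon_\alpha|\in\Lpl^d$), taking $R\to\infty$, and using $\E^{-\alpha\uptau_R}V^\epsilon_\alpha(X_{\uptau_R})\to0$ (here the a priori bound $V^\epsilon_\alpha\le\sV+C$ and the recurrence properties of $\Hat v$, or a Gronwall/growth estimate on $\Exp^U_x[\sV(X_t)]$, are used to kill the boundary term); the two inequalities give equality, with the lower bound attained by a measurable selector. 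Minimality of $V^\epsilon_\alpha$ among nonnegative solutions follows since any nonnegative supersolution dominates each $\psi^\epsilon_{\alpha,R}$ by comparison on $B_R$ (it is nonnegative on $B_R^c$), hence dominates the limit.

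For the verification statement, if $v\in\Usm$ is a selector from the minimizer in \cref{ET5.1A}, then $\Ag_v V^\epsilon_\alpha+\rc^\epsilon_v=\alpha V^\epsilon_\alpha$ a.e., and applying the It\^o formula under $v$ on $B_R$, sending $R\to\infty$, and using that the boundary term vanishes (now $v\in\Ussm$, guaranteed by \cref{T4.1} since $v$ is stabilizing for $\rc^\epsilon$, so $\Exp^v_x[\E^{-\alpha\uptau_R}V^\epsilon_\alpha(X_{\uptau_R})]\to0$) gives $V^\epsilon_\alpha(x)=\Exp^v_x\bigl[\int_0^\infty\E^{-\alpha t}\rc^\epsilon_v(X_t)\,\D t\bigr]$, so $v$ is optimal. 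Conversely, if $v$ is optimal it attains the infimum in the It\^o identity for every $R$, forcing $\Ag_v V^\epsilon_\alpha+\rc^\epsilon_v=\alpha V^\epsilon_\alpha$ a.e., i.e.\ $v$ is a.e.\ a selector from the minimizer; this is the standard argument and I would cite \cite{ACPZ19} or \cite{ABG12} for the details. The main obstacle is the a priori estimate on $V^\epsilon_\alpha$ and, relatedly, the vanishing of the boundary term $\Exp^U_x[\E^{-\alpha\uptau_R}\psi^\epsilon_{\alpha,R}(X_{\uptau_R})]$ uniformly enough to pass to the limit — this is precisely where \cref{A2.2} (the Foster--Lyapunov function $\sV$) is indispensable, since without near-monotonicity of $\rc$ one has no a priori compactness of sublevel sets of the value function and must instead lean on the explicit Lyapunov bound $V^\epsilon_\alpha\le\sV+C$.
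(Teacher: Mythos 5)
Your proposal takes essentially the same route as the paper: monotone truncation $\psi^\epsilon_{\alpha,R}\nearrow V^\epsilon_\alpha$, an a priori bound via the Foster--Lyapunov function $\sV$ from \cref{A2.2} (the paper records it explicitly as $V_\alpha^{\epsilon}\le \frac{3\Hat\kappa+2}{\alpha}+\Vo+3\sV$, which also folds in $\Vo$ to absorb the $\epsilon\widetilde F$ term), local $\Sob^{2,p}$ estimates to pass to the limit, and It\^o's formula plus the vanishing boundary term for the stochastic representation and verification. The paper delegates most of this machinery to \cite{ACPZ19}*{Theorem~3.2} and \cite{ABG12}*{Theorem~3.5.6}, which is what your argument unpacks.

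One point you wave off as ``the standard argument'' that the paper treats with some care: for the \emph{necessity} direction, the argument of \cite{ABG12}*{Theorem~3.5.6} relies on the resolvent of the controlled process under $v\in\Usm$ having a positive density with respect to Lebesgue measure, so that the minimizer condition derived almost everywhere can be propagated. For the pure diffusion this is \cite{ABG12}*{Theorem~A.3.5}, but for the jump diffusion \cref{E-sde} it needs the additional observation — made in the paper via \cite{Li03}*{Lemma~2.1} and the finiteness of $\nu$ — that adding the compound-Poisson jump part preserves the positive-density property of the resolvent. This is precisely where the jump structure matters and is not fully ``standard''; without it, the necessity claim is not justified. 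It would be worth making this explicit rather than folding it into a citation.
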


\begin{proof}
Under \cref{A2.2}, the proof for
the existence of a minimal nonnegative solution
$V^{\epsilon}_\alpha\in\Sobl^{2,p}(\RR^d)$ 
is exactly same as in \cite{ACPZ19}*{Theorem~3.2}.
A straightforward application of the comparison principle shows
that the following bound holds
\begin{equation}\label{PT5.1A}
V_\alpha^{\epsilon}(x) \,\le\, \frac{3\Hat{\kappa}+2}{\alpha} + \Vo(x) +
3\sV(x) \qquad \forall x\in\RR^d\,,\ \forall \alpha\in(0,1)\,,\
\forall \epsilon\in[0,\Tilde{\kappa}^{-1})\,. 
\end{equation}
From \cref{PT5.1A}, 
we have $\widetilde{\cI}V^{\epsilon}_\alpha \in \Lpl^{\infty}(\RR^d)$. 
Thus using the It\^{o}'s formula in \cref{E-Ito}, 
the stochastic representation and the sufficiency part of
the verification of optimality are established in
a standard manner (see, e.g., \cite{ABG12}*{Theorem~3.5.6 and Remark~3.5.8}).
On the other hand, for any $v\in\Usm$, 
the resolvent of the controlled diffusion defined in \cref{E-diffusion}
has a positive density with respect to the Lebesgue measure
by \cite{ABG12}*{Theorem~A.3.5}.
Since the L\'{e}vy measure $\nu$ is finite, 
then applying \cite{Li03}*{Lemma~2.1}, we see that
the same holds for the resolvent of the jump diffusion in \cref{E-sde}.
Thus, we may repeat the argument in  \cite{ABG12}*{Theorem~3.5.6} to establish the 
necessity part of the verification of optimality.
This completes the proof.
\end{proof}

We proceed to derive the HJB equation on the ergodic control problem by using the
vanishing discount method.
The technique used has some important differences from \cite{ACPZ19},
since here the running cost is not near-monotone when $\epsilon=0$.
To overcome this difficulty, we derive lower and upper bounds for $V^{\epsilon}_\alpha$
in the lemma which follows.

%%%%%%%%%%%%%%%%%%%%%%%%%%%%%%%%%%%%%%%%%%%%%%%%%%%%%%%%%%%%%%%%%%%%%%%%%%%%%%%%
\begin{lemma}\label{L5.1}
Grant the hypotheses in \cref{A2.1,A2.2}.
For any $\delta\in (0, \frac{1}{2}]$, there exists $\tilde{r} = r(\delta) > 0$ such that 
\begin{equation}\label{EL5.1A}
V^\epsilon_\alpha \,\ge\, \inf_{B_r}\,V^\epsilon_{\alpha}
- \delta\Vo\qquad\text{on\ }B_r^c\,, \quad\forall\, r>\Tilde{r}\,,
\end{equation}
for all $\alpha\in(0,1)$ and $\epsilon\in[0,\Tilde\kappa^{-1})$.
Moreover, there exists $r_\circ > 0$ such that
\begin{equation}\label{EL5.1B}
V^{\epsilon}_{\alpha} \,\le\, 
\sup_{B_{r_\circ}}\, V^{\epsilon}_{\alpha} + \Vo
+ 3\sV \qquad \text{on\ } \RR^d\,,
\end{equation}
for all $\alpha\in(0,1)$ and $\epsilon\in[0,\Tilde\kappa^{-1})$. 
\end{lemma}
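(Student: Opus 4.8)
The plan is to establish the two bounds separately, both via the It\^{o} formula in \cref{E-Ito} applied to $V^\epsilon_\alpha$ (legitimate since \cref{PT5.1A} gives $\widetilde\cI V^\epsilon_\alpha\in\Lpl^\infty(\RR^d)$) together with the supersolution/subsolution inequalities satisfied by $V^\epsilon_\alpha$.

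For the lower bound \cref{EL5.1A}, the idea is to exploit the first inequality in \cref{EA2.1A}, which says $\Ag_u\Vo\le\Ind_{\sB_\circ}-F(x,u)$ on $\cK^c\times\Act$, where $F$ is coercive and nonnegative. Fix $\delta\in(0,\tfrac12]$. Since $\rc^\epsilon\ge 0$ and $V^\epsilon_\alpha\ge0$ satisfies $\Ag_u V^\epsilon_\alpha+\rc^\epsilon(\cdot,u)\ge\alpha V^\epsilon_\alpha\ge0$ for a.e.\ $x$ and every $u$, I would consider the function $w\df V^\epsilon_\alpha-\inf_{B_r}V^\epsilon_\alpha+\delta\Vo$ on $B_r^c$ and aim to show $w\ge0$ there. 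On $B_r^c\cap\cK^c$ (and $r$ chosen large enough that $\sB_\circ\subset B_r$, so the indicator terms vanish), for any measurable selector $v$ from the minimizer we have $\Ag_v V^\epsilon_\alpha\ge\alpha V^\epsilon_\alpha-\rc^\epsilon(\cdot,v)\ge-\rc^\epsilon(\cdot,v)$ and $\Ag_v\Vo\le-F(\cdot,v)$, so $\Ag_v w\le-F(\cdot,v)+\delta^{-1}\cdot\delta\rc^\epsilon(\cdot,v)$... — rather, the cleaner route is: since $F$ is coercive and $\rc^\epsilon\le(1+\epsilon\Tilde\kappa)\rc+\epsilon\Tilde\kappa(\Ind_{\sB_\circ}+F)$ is dominated by a constant multiple of $F$ plus a constant outside a compact set \emph{within} $\widetilde\cK^c$, one gets $\rc^\epsilon\le C_\delta+\delta F$ outside a large ball. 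The subtlety is that $\cK^c$ is where $F$ controls things but $\rc$ need not be small on all of $\cK^c$; this is exactly why $\widetilde\cK$ was introduced. So I would instead run the argument on $\widetilde\cK^c$ using \cref{E4.1A}: there $\Ag_v\Vo\le\Ind_{\sB_\circ}-F(\cdot,v)$, and by \cref{E-tildF} $\rc^\epsilon\le C\,\Ind_{\sB_\circ}$ off $\widetilde\cK$... but $\rc$ itself may be unbounded there, so that fails too. The honest plan: on $\widetilde\cK^c$ one has $\rc\le F$ by definition of $\widetilde\cK$, hence $\rc^\epsilon=\rc+\epsilon\Tilde F\le\rc+\epsilon\Tilde\kappa(F+\Ind_{\sB_\circ})\le(1+\epsilon\Tilde\kappa)F+\epsilon\Tilde\kappa$ off $\sB_\circ$; choosing $\Tilde r$ so large that $\widetilde\cK\cap B_{\Tilde r}^c$ is handled by coercivity of $\rc$ on $\cK$ (on $\widetilde\cK\cap\cK^c$ we are back to $\rc\le F$), I combine to get, for $r>\Tilde r$ and the stopped process, $\Exp_x^v[w(X_{\uptau_r\wedge\uuptau_\rho})]\le w(x)$-type inequalities letting $\rho\to\infty$, using that $F$-coercivity plus $\delta$-smallness makes the integrand of the drift term nonpositive on $B_r^c$. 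Running the It\^{o} formula from $x\in B_r^c$ up to the exit time of an annulus and letting the outer radius go to infinity, the coercivity of $\Vo$ and $F$ forces the boundary contribution at infinity to vanish, yielding $V^\epsilon_\alpha(x)+\delta\Vo(x)\ge\inf_{B_r}V^\epsilon_\alpha$, uniformly in $\alpha$ and $\epsilon$.

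For the upper bound \cref{EL5.1B}, the plan is cleaner: use \cref{A2.2} and \cref{A2.1} to build a supersolution. From the proof of \cref{T5.1} we already have \cref{PT5.1A}, namely $V^\epsilon_\alpha\le\tfrac{3\Hat\kappa+2}{\alpha}+\Vo+3\sV$, but that blows up as $\alpha\to0$; the point of \cref{EL5.1B} is to replace the $\alpha^{-1}$ term by $\sup_{B_{r_\circ}}V^\epsilon_\alpha$. I would set $\varphi\df\sup_{B_{r_\circ}}V^\epsilon_\alpha+\Vo+3\sV$ and check it is a supersolution of the discounted HJB \emph{outside} $B_{r_\circ}$: using the second inequality of \cref{EA2.1A} on $\cK$, the first on $\cK^c$, and \cref{EA2.2A} for $\sV$, one gets $\min_u[\Ag_u\varphi+\rc^\epsilon(\cdot,u)]\le\alpha\varphi$ on $B_{r_\circ}^c$ provided $r_\circ$ is large enough that $\sB_\circ\subset B_{r_\circ}$ and the coercive terms $F$ and $\rc_{\hat v}$ absorb $\rc^\epsilon$ (recall $\rc^\epsilon\le\rc+\epsilon\Tilde\kappa\Tilde F$ and $\Tilde F\le\Tilde\kappa(\Ind_{\sB_\circ}+\rc\Ind_{\widetilde\cK}+F\Ind_{\widetilde\cK^c})$, each piece dominated by $F$ or $\rc_{\hat v}$ up to constants outside a ball; one also uses $\alpha\varphi\ge0$ to drop the $\alpha\varphi$ contribution in the favorable direction). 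On $\partial B_{r_\circ}$ we have $\varphi\ge\sup_{B_{r_\circ}}V^\epsilon_\alpha\ge V^\epsilon_\alpha$. The comparison principle on $B_{r_\circ}^c$ (valid for these integro-differential operators, as used repeatedly in \cite{ACPZ19}) then gives $V^\epsilon_\alpha\le\varphi$ on $B_{r_\circ}^c$, and on $B_{r_\circ}$ it is trivial; the bound is uniform in $\alpha,\epsilon$.

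The main obstacle is the lower bound \cref{EL5.1A}. Because $\rc$ is not near-monotone when $\epsilon=0$, one cannot simply say the discounted value is large far away; the coercivity of $F$ rather than of $\rc$ must do the work, and the bookkeeping across the three regions $\sB_\circ$, $\cK^c\setminus\sB_\circ$, and $\widetilde\cK\setminus\cK$ (where $\rc$ can be large but still $\le F$) has to be done carefully so that choosing $\delta$ small and $\Tilde r$ large makes the drift term in the It\^{o} expansion have a sign. The crucial estimate is precisely on the nonlocal term $\widetilde\cI\Vo$ when passing $\Ag_v\Vo$ through the inequality near the boundary of $B_r$, since $\Vo(x+y)$ with $|y|$ large can move a point from $B_r^c$ back into $B_r$; here finiteness of $\nu$ and the uniform bound $\widetilde\cI\Vo\in\Lpl^\infty$ (from \cref{A2.1}) keep this term controlled. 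Once the sign of the drift integrand is secured on $B_r^c$, letting the outer radius of the annulus tend to infinity and invoking coercivity of $\Vo$ to kill the far boundary term finishes the argument, uniformly over $\alpha$ and $\epsilon$.
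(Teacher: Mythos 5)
Your plan for the upper bound \cref{EL5.1B} is sound and essentially the paper's: $\sup_{B_{r_\circ}}V^\epsilon_\alpha+\Vo+3\sV$ is checked to be a supersolution under the fixed control $\Hat v$ of \cref{A2.2} outside a ball containing $\sB_\circ$, with $-F_{\Hat v}\Ind_{\widetilde\cK^c}+\rc_{\Hat v}\Ind_{\widetilde\cK}$ and $-3\rc_{\Hat v}$ absorbing $\rc^\epsilon_{\Hat v}+\epsilon\widetilde F_{\Hat v}$ through \cref{E-tildF} (hence the coefficient $3$). The paper carries this out on the Dirichlet solutions $\psi^\epsilon_{\alpha,R}$ on $B_R$ and concludes by the strong maximum principle before letting $R\to\infty$; that is a bookkeeping convenience, not a conceptual difference.

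For the lower bound \cref{EL5.1A} there is a genuine gap. For a measurable selector $v$ of the HJB, $\Ag_v w-\alpha w=-\rc^\epsilon_v+\delta(\Ag_v\Vo-\alpha\Vo)+\alpha\inf_{B_r}V^\epsilon_\alpha$ with $w\df V^\epsilon_\alpha-\inf_{B_r}V^\epsilon_\alpha+\delta\Vo$. The nonnegative constant $\alpha\inf_{B_r}V^\epsilon_\alpha$ enters with the \emph{wrong} sign, and the whole content of \cref{EL5.1A} is that it can be absorbed by the coercive terms outside a ball $B_{\Tilde r}$ whose radius depends \emph{only} on $\delta$, uniformly over $\alpha\in(0,1)$ and $\epsilon\in[0,\Tilde\kappa^{-1})$. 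You attribute this to ``$F$-coercivity plus $\delta$-smallness,'' but without a uniform bound on $\alpha\inf_{B_r}V^\epsilon_\alpha$ the constant to be absorbed could a priori grow as $\alpha\searrow0$, and no single $\Tilde r(\delta)$ would work. The paper supplies the missing ingredient through an invariant-measure estimate \textup{(see \cref{PL5.1A})}: integrating the stochastic representation of $\alpha V^\epsilon_\alpha$ against the invariant measure $\mu_{v_*}$ of an ergodic-optimal $v_*\in\Ussm$ and invoking \cref{PT4.1B} gives $\mu_{v_*}(B_r)\,\inf_{B_r}\alpha V^\epsilon_\alpha\le\uppi_{v_*}(\rc^\epsilon)\le\varrho_*+\epsilon\Tilde\kappa(1+2\varrho_*)$, a bound depending only on $r_\circ$. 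Only then does coercivity of $\widetilde F$ yield $\Tilde r(\delta)$ with $\inf_{B_{\Tilde r}}\alpha\psi^\epsilon_{\alpha,R}\le\delta\Tilde\kappa^{-1}\widetilde F_{\Hat v_R}$ on $B_{\Tilde r}^c$, which combined with \cref{E4.1A} and \cref{E-tildF} makes the comparison function's drift nonpositive on $B_R\setminus B_{\Tilde r}$. A secondary point: the paper handles the outer boundary by working with $\psi^\epsilon_{\alpha,R}$ on $B_R$ and using that $\inf_{B_{\Tilde r}}\psi^\epsilon_{\alpha,R}\le\delta\Vo$ near $\partial B_R$ for $R$ large (monotone convergence of $\psi$ to $V^\epsilon_\alpha$ plus coercivity of $\Vo$), then applies the strong maximum principle; the It\^o-on-an-annulus version you sketch requires more care with the exit distribution of the jump process and is not clearly set up. Your repeated false starts in the prose are a symptom of this missing estimate rather than of the bookkeeping over $\cK$, $\widetilde\cK$, $\sB_\circ$.
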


\begin{proof}
Let $v_*$ be an optimal control in $\Ussm$. Its existence has
been asserted in \cref{T4.1}.
Recall that $\mu_{v_*}$ denotes the invariant probability measure under $v_*$. 
Using \cref{ET5.1B}, Fubini's theorem, and \cref{PT4.1B}, we obtain
\begin{equation*}
\mu_{v_*}(B_r)\Bigl(\inf_{B_r}\,\alpha V^\epsilon_\alpha\Bigr) 
\,\le\, \varrho_* \,\le\,
\varrho_* +\epsilon \Tilde\kappa(1+2\varrho_*)
\end{equation*}
for any $r > 0$.
Fix some $r_\circ>0$ such that $B_{r_\circ}\supset\sB_\circ$.
Then
\begin{equation}\label{PL5.1A}
\inf_{B_r}\,\alpha V^\epsilon_\alpha \,\le\,
\frac{\epsilon\Tilde\kappa+ (1+2\epsilon\Tilde\kappa)\varrho_*}
{\mu_{v_*}(B_r)} \,\le\, \frac{\varrho_*}{\mu_{v_*}(B_{r_\circ})}\,,
\end{equation}
for all $r>r_\circ$,
$\alpha\in(0,1)$, and $\epsilon\in[0,\Tilde{\kappa}^{-1})$.

We first establish a lower bound of $V^\epsilon_\alpha$.
Let $\psi^\epsilon_{\alpha,R}$ satisfy \cref{E-dir},
and $\Hat{v}_R\in\Uadm$ be a measurable selector from its minimizer, that is,
it satisfies
\begin{equation}\label{PL5.1B}
\Ag_{\Hat{v}_R} \psi^\epsilon_{\alpha,R}-\alpha \psi^\epsilon_{\alpha,R}
\,=\, - \rc^\epsilon_{\Hat{v}_R}  \qquad \text{on\ } B_R\,.
\end{equation}
Let $\delta\in\bigl(0,\frac{1}{2}\bigr]$ be arbitrary.
By \cref{PL5.1A}, and the coerciveness of $\widetilde{F}$ in \cref{E-tildF},
there exists $\Tilde{r}=\Tilde{r}(\delta)>r_\circ$ such that
\begin{equation}\label{PL5.1C}
\inf_{B_{\Tilde{r}}}\, \alpha \psi^\epsilon_{\alpha,R}
\,\le\,\delta \Tilde\kappa^{-1}\widetilde{F}_{\Hat{v}_R}(x)
\quad\text{for all\ } x\in B_{\Tilde{r}}^c\,,\ R\ge \Tilde{r}\,,\ \alpha\in(0,1)\,,
\text{\ and\ }\epsilon\in[0,\Tilde{\kappa}^{-1})\,.
\end{equation}
Let
\begin{equation}\label{PL5.1D}
\phi^{\epsilon}_{\alpha,R} \,\df\, \delta\Vo + \psi^\epsilon_{\alpha,R}
-\inf_{B_{\Tilde{r}}}\,\psi^\epsilon_{\alpha,R}\,.
\end{equation}
By \cref{E-tildF,E4.1A,PL5.1B,PL5.1C}, we have 
\begin{equation}\label{PL5.1E}
\begin{aligned}
\Ag \phi^{\epsilon}_{\alpha,R} -\alpha \phi^{\epsilon}_{\alpha,R} &\,\le\,
\inf_{B_{\Tilde{r}}}\, \alpha \psi^\epsilon_{\alpha,R}
- \delta F_{\Hat{v}_R}\Ind_{\widetilde{\cK}^c}
- (1 - \delta) \rc_{\Hat{v}_R}\Ind_{\widetilde{\cK}}  \\
&\,\le\, \inf_{B_{\Tilde{r}}}\, \alpha \psi^\epsilon_{\alpha,R}
- \delta \Tilde\kappa^{-1}\widetilde{F}_{\Hat{v}_R}\\
&\,\le\, 0
\quad \text{on\ }B_R\setminus B_{\Tilde{r}}\,,\text{\ for all\ }R\ge \Tilde{r}\,.
\end{aligned}
\end{equation}
Since $\psi^\epsilon_{\alpha,R}$ converges monotonically
to $V^{\epsilon}_{\alpha}$ as $R\rightarrow\infty$ and $\Vo$ is coercive,
there exists $R_0=R_0(\delta,\alpha)>\Tilde{r}$ such that
\begin{equation}\label{PL5.1F}
\inf_{B_{\Tilde{r}}}\,\psi^\epsilon_{\alpha,R}
\,\le\,\delta\Vo(x)
\qquad\forall\,x\in B_R\setminus B_{R_{0}}\,,\ R>R_0\,.
\end{equation}
Thus,
since $\phi^{\epsilon}_{\alpha,R}\ge0$ on $B_{\Tilde{r}}$ by \cref{PL5.1D},
and $\phi^{\epsilon}_{\alpha,R}\ge0$ on $B_R\setminus B_{R_{0}}$ by \cref{PL5.1F},
it follows that $\phi^{\epsilon}_{\alpha,R}\ge0$ on $\Rd$ for all $R>R_0$ by
\cref{PL5.1E} and the strong maximum principle. 
Taking limits as $R\rightarrow\infty$ in
\cref{PL5.1D}, we obtain
\begin{equation*}
V^\epsilon_\alpha \,\ge\, \inf_{B_{\Tilde{r}}}\,V^\epsilon_{\alpha}
- \delta\Vo\qquad\text{on\ }B^c_{\tilde{r}}\,,
\end{equation*}
which establishes \cref{EL5.1A}.

Next we prove the upper bound. 
For $\Hat{v}$ in \cref{A2.2}, we have
\begin{equation}\label{PL5.1G}
\Ag_{\Hat{v}} (-\psi^\epsilon_{\alpha,R})-\alpha (-\psi^\epsilon_{\alpha,R})
\,\le\, \rc_{\Hat{v}} + \epsilon \widetilde{F}_{\Hat{v}} \qquad \text{on\ } B_R\,.
\end{equation}
Recall that $B_{r_\circ}\supset\sB_\circ$, and
select any balls $D_1$ and $D_2$, such that $B_{r_\circ}\Subset D_1\Subset D_2$.
By \cref{PL5.1B,E-tildF,E4.1A,PL5.1G},
the function
\begin{equation*}
\Hat{\phi}^{\epsilon}_{\alpha,R} \,\df\, 
\sup_{B_{r_\circ}}\,\psi^{\epsilon}_{\alpha,R}
- \psi^{\epsilon}_{\alpha,R} + \Vo + 3\sV
\end{equation*} 
satisfies
\begin{equation*}
\Ag_{\Hat{v}}\Hat{\phi}^{\epsilon}_{\alpha,R} - \alpha \Hat{\phi}^{\epsilon}_{\alpha,R}
\,\le\, -\sup_{B_{r_\circ}}\,\psi^{\epsilon}_{\alpha,R} 
\,\le\, 0 \qquad \text{on\ }B_R\setminus B_{r_\circ}\,,
\end{equation*}
for all $\alpha\in(0,1)$ and $\epsilon\in[0,\Tilde\kappa^{-1})$.
It is evident that $\Hat{\varphi}^{\epsilon}_{\alpha,R} \ge 0$
on $B_{r_\circ}\cup B_R^c$.
Thus, employing the strong maximum principle,
we obtain
\begin{equation}\label{PL5.1H}
\psi^{\epsilon}_{\alpha,R} \,\le\, 
\sup_{B_{r_\circ}} \psi^{\epsilon}_{\alpha,R} + \Vo
+ 3\sV \qquad \text{on\ } \RR^d\,,
\end{equation}
for all $\alpha\in(0,1)$ and $\epsilon\in[0,\Tilde\kappa^{-1})$.
Letting $R\rightarrow\infty$ in \cref{PL5.1H}, we obtain \cref{EL5.1B}.
This completes the proof.
\end{proof}

We also need the following estimate.
Its proof combines the technique in the proof of \cite{ACPZ19}*{Theorem~3.3}
with \cref{L5.1}.

%%%%%%%%%%%%%%%%%%%%%%%%%%%%%%%%%%%%%%%%%%%%%%%%%%%%%%%%%%%%%%%%%%%%%%%%%%%%%%%%
\begin{lemma}\label{L5.2}
Grant the hypotheses in \cref{A2.1,A2.2}.
For each $R>0$, there exists a constant $\kappa_R$ such that
\begin{equation*}
\osc_{B_R}\,V^{\epsilon}_{\alpha} \,\le\, \kappa_R
\end{equation*}
for all $\alpha\in(0,1)$ and $\epsilon\in[0,\Tilde\kappa^{-1})$.
\end{lemma}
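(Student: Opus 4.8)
The plan is to follow the scheme of \cite{ACPZ19}*{Theorem~3.3}: apply Harnack's inequality to the \emph{local} part $\widetilde\Lg_{v_\alpha}$ of the HJB operator — permissible even though, by \cite{ACPZ19}*{Example~1.1}, the Harnack property fails for the full nonlocal $\Ag_{v_\alpha}$ — treating the nonlocal term $\widetilde\cI V^\epsilon_\alpha$ as a source, and use \cref{L5.1} in place of the near-monotonicity that was available in \cite{ACPZ19}. Fix $R>0$; since $R\mapsto\osc_{B_R}$ is nondecreasing we may take $R>r_\circ\vee\Tilde{r}(\tfrac12)$, with $r_\circ$ and $\Tilde{r}(\cdot)$ as in \cref{L5.1}. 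Let $v_\alpha\in\Usm$ be a measurable selector of the minimizer in \cref{ET5.1A}; then $\Ag_{v_\alpha}V^\epsilon_\alpha=\alpha V^\epsilon_\alpha-\rc^\epsilon_{v_\alpha}$, and with the decomposition in \cref{E-cI} this reads
\begin{equation*}
a^{ij}\partial_{ij}V^\epsilon_\alpha+\widetilde{b}^i_{v_\alpha}\partial_i V^\epsilon_\alpha
\,=\,(\alpha+\bm{\nu})V^\epsilon_\alpha-\widetilde\cI V^\epsilon_\alpha-\rc^\epsilon_{v_\alpha}\,.
\end{equation*}
Put $\widehat V^\epsilon_\alpha\df V^\epsilon_\alpha-\inf_{B_R}V^\epsilon_\alpha+\tfrac12\Vo$, which by \cref{EL5.1A} is nonnegative on $\Rd$ and satisfies $\inf_{B_R}\widehat V^\epsilon_\alpha\le\tfrac12\sup_{B_R}\Vo$. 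Using the inequality $\Ag_{v_\alpha}\Vo\le\Ind_{\sB_\circ}+\rc_{v_\alpha}$ from \cref{EA2.1A}, the bound on $\alpha V^\epsilon_\alpha$ on compacta afforded by \cref{PT5.1A} (uniform in $\alpha,\epsilon$), and \cref{EL5.1B} together with $\widetilde\cI\Vo,\widetilde\cI\sV\in\Lpl^\infty(\Rd)$ to control $\widetilde\cI\widehat V^\epsilon_\alpha$, one verifies that $\widehat V^\epsilon_\alpha\ge0$ solves on $B_{2R}$ a uniformly elliptic equation with coefficients bounded on $B_{2R}$ uniformly in $\alpha,\epsilon$ and the selector, and with source bounded there by $C_R+\bm{\nu}\,\osc_{B_R}V^\epsilon_\alpha$ for a constant $C_R$ depending only on $R$ and the data. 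Harnack's inequality for the local operator then yields $\osc_{B_R}V^\epsilon_\alpha\le\sup_{B_R}\widehat V^\epsilon_\alpha+\tfrac12\osc_{B_R}\Vo\le\kappa'_R\bigl(1+\osc_{B_R}V^\epsilon_\alpha\bigr)$, which is not yet conclusive, since the oscillation reappears on the right-hand side through the nonlocal term.

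Absorbing that term is the main difficulty; a Harnack chain over small balls will not do it, because the chain constant compounds faster than the gain in the source. I would argue by contradiction through a blow-up. Suppose there exist $\alpha_n\in(0,1)$ and $\epsilon_n\in[0,\Tilde\kappa^{-1})$ with $M_n\df\osc_{B_R}V^{\epsilon_n}_{\alpha_n}\to\infty$, and set $W_n\df M_n^{-1}\bigl(V^{\epsilon_n}_{\alpha_n}-\inf_{B_R}V^{\epsilon_n}_{\alpha_n}\bigr)$, so that $\inf_{B_R}W_n=0$ and $\sup_{B_R}W_n=1$. By \cref{EL5.1B}, $W_n\le1+M_n^{-1}(\Vo+3\sV)$, and by \cref{EL5.1A}, $W_n\ge-\tfrac12 M_n^{-1}\Vo$; hence $\{W_n\}$ is bounded on every compact set, uniformly in $n$. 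From the rescaled equation $\Ag_{v_{\alpha_n}}W_n=M_n^{-1}\bigl(\alpha_n V^{\epsilon_n}_{\alpha_n}-\rc^{\epsilon_n}_{v_{\alpha_n}}\bigr)$, whose right-hand side converges to $0$ in $\Lpl^{p}(\Rd)$ by \cref{PT5.1A}, together with interior $\Sob^{2,p}$ estimates for the local operator (the nonlocal term moved to the source, where it is bounded on compacta), the sequence $\{W_n\}$ is bounded in $\Sob^{2,p}(B_M)$ for every $M$ and $p$, so a subsequence converges in $\Cc^{1}$ on compact sets to some $W$. Passing to the limit — using the convergence of the relaxed controls $v_{\alpha_n}\to v\in\Usm$ as in \cite{ABG12} and dominated convergence for $\widetilde\cI W_n$, legitimate since $0\le W\le1$ — one obtains a bounded nonnegative solution $W$ of $\Ag_v W=0$ on $\Rd$ with $\osc_{B_R}W=1$.

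Finally I would invoke the strong maximum principle for the local operator. Since $\Ag_v=\widetilde\Lg_v+\widetilde\cI$ and $W\ge0$, the identity $\Ag_v W=0$ gives $\widetilde\Lg_v W=-\widetilde\cI W\le0$, so $W$ is a nonnegative supersolution of the uniformly elliptic operator $\widetilde\Lg_v=a^{ij}\partial_{ij}+\widetilde{b}^i_v\partial_i-\bm{\nu}$, whose zeroth-order coefficient is nonpositive. As $\inf_{\Rd}W=0$ is attained at a point of $\Bar{B}_R$, hence at an interior point of $\Rd$, the strong maximum principle forces $W\equiv0$, contradicting $\osc_{B_R}W=1$. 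Therefore $\osc_{B_R}V^\epsilon_\alpha$ is bounded uniformly over $\alpha\in(0,1)$ and $\epsilon\in[0,\Tilde\kappa^{-1})$, which is the claim. One could also note that once the estimate is established for a single radius, \cref{EL5.1B} makes $\widetilde\cI V^\epsilon_\alpha$ uniformly bounded on all compacta, so the Harnack step of the first paragraph, now with a genuinely bounded source, delivers $\kappa_R$ for every $R$ at once.
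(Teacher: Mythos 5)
Your proof is correct but takes a genuinely different route from the paper's.

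The paper's proof is a chain of quantitative estimates: starting from the shifted function $\varphi^\epsilon_\alpha = V^\epsilon_\alpha - V^\epsilon_\alpha(\Hat x^\epsilon_\alpha)$, it invokes the local maximum principle (GT~Thm.~9.20) and supersolution estimate (GT~Thm.~9.22) to bound $\sup_{B_{r_\circ}}\varphi^\epsilon_\alpha$ by an $\Lp^p$-norm of the nonlocal term, splits into cases according to whether the nonlocal contribution dominates, and in the harder case subtracts the solution of a Dirichlet problem for the local operator $\widetilde\Lg$, applies the ABP weak maximum principle, and closes the loop with the Harnack inequality of \cite{AA-Harnack} for weakly coupled systems. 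You instead observe (correctly) that a naive Harnack step only yields $\osc_{B_R}V^\epsilon_\alpha\le\kappa'_R(1+\osc_{B_R}V^\epsilon_\alpha)$, which is not by itself conclusive, and then sidestep the bookkeeping entirely with a blow-up/compactness argument: normalize $W_n = M_n^{-1}(V^{\epsilon_n}_{\alpha_n}-\inf_{B_R}V^{\epsilon_n}_{\alpha_n})$, use the upper and lower envelopes of \cref{EL5.1A,EL5.1B} together with \cref{PT5.1A} to get uniform local bounds and local $\Sob^{2,p}$ compactness, extract a limit $W$ with $\Ag_v W = 0$, $0\le W\le 1$, and $\osc_{B_R}W = 1$, and then exploit the decomposition $\widetilde\Lg_v W = -\widetilde\cI W\le 0$ so that the strong minimum principle for the local operator $\widetilde\Lg_v$ (whose zeroth-order coefficient is $-\bm\nu\le0$), applied at the interior zero minimum of $W$, forces $W\equiv0$, a contradiction. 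Both proofs hinge on Lemma~\ref{L5.1} as the only substantive input; yours trades the paper's explicit, constructive constant for a nonconstructive compactness step, which is arguably simpler conceptually and avoids the cooperative Harnack machinery. One point you glossed over that deserves a sentence: the dominating function for $\widetilde\cI W_n$ should be stated as $1+\Vo(\cdot)+3\sV(\cdot)$ (for $M_n\ge1$, say), whose local $\nu$-integrability is precisely what $\widetilde\cI\Vo,\widetilde\cI\sV\in\Lpl^\infty(\Rd)$ ensures, rather than appealing to the limit bound $0\le W\le 1$, which is only available \emph{after} the convergence has been justified.
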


\begin{proof}
We choose $B_{r_\circ}$, $D_1$, and $D_2$ as in the proof of \cref{L5.1}.
By \cref{EA2.2A,E4.1A}, 
it is evident that $\widetilde{\cI}(\Vo + 3 {\sV}) \in \Lpl^{\infty}(\RR^d)$.
Let $\Hat{x}^\epsilon_\alpha\in\Argmin_{\Bar D_2}\, V^\epsilon_\alpha$.
The function
$\varphi^\epsilon_\alpha\df V^\epsilon_\alpha
- V^\epsilon_\alpha(\Hat{x}^\epsilon_\alpha)$
satisfies
\begin{equation*}
\min_{u\in\Act}\,\bigl[\Ag_u\varphi^\epsilon_\alpha(x) -\alpha\varphi^\epsilon_\alpha(x)
+ \rc^\epsilon(x,u)\bigr]
\,=\, \alpha V^\epsilon_\alpha(\Hat{x}^\epsilon_\alpha)\,\le\,
\frac{\varrho_*}{\mu_{v_*}(B_{r_\circ})}\,,
\end{equation*}
where the inequality follows by \cref{PL5.1A}.
Using \cref{EL5.1B}, we obtain
\begin{equation}\label{PL5.2A}
\sup_{B_R}\,\varphi^\epsilon_\alpha \,\le\, 
\sup_{B_{r_\circ}}\,\varphi^\epsilon_\alpha + \sup_{B_R}\,\bigl(3{\sV} + \Vo\bigr)
\quad\text{for all\ } R>r_\circ\,,
\end{equation}
$\alpha\in(0,1)$ and $\epsilon\in[0,\Tilde\kappa^{-1})$.
Let $v^{\epsilon}_\alpha$ be a measurable selector from the minimizer of
the $\alpha$-discounted problem associated with $\rc^{\epsilon}$.
By the local maximum principle \cite{GilTru}*{Theorem~9.20}, for any $p>0$, there exists
a constant $\Tilde{C}_{1}(p)>0$ such that
\begin{equation*}
\sup_{B_{r_\circ}}\,\varphi^\epsilon_\alpha 
\,\le\, \Tilde{C}_{1}(p)\bigl(\norm{\varphi^\epsilon_\alpha}_{p;D_{1}}
+ \norm{\widetilde\cI\varphi^\epsilon_\alpha}_{\Lp^d(D_1)}
+ \norm{\rc^\epsilon_{v^{\epsilon}_\alpha}}_{\Lp^d(D_1)}\bigr)
\end{equation*}
with $\norm{\varphi^\epsilon_\alpha}_{p;D_{1}} 
\df \bigl(\int_{D_1}\abs{\varphi^{\epsilon}_\alpha(x)}\D{x}\bigr)^{\nicefrac{1}{p}}$,
and by the supersolution estimate \cite{GilTru}*{Theorem~9.22}, there exist some
$p>0$ and $\Tilde{C}_{2}>0$ such that
\begin{equation*}
\norm{\varphi^\epsilon_\alpha}_{p;D_1} \,\le\, \Tilde{C}_{2}\Bigl(
\inf_{D_1}\,\varphi^\epsilon_\alpha+\,\kappa_1\,
\abs{D_2}^{\nicefrac{1}{d}}\Bigr)\,.
\end{equation*}
On the other hand, the inequality in \cref{EL5.1A} implies that
$\inf_{D_1}\,\varphi^\epsilon_\alpha \le \sup_{D_2}\Vo$.
Combining these estimates, we obtain
\begin{equation}\label{PL5.2B}
\sup_{B_{r_\circ}}\,\varphi^\epsilon_\alpha 
\,\le\, \kappa_2 +
\Tilde{C}_{1}(p)\,\norm{\widetilde\cI\,\varphi^\epsilon_\alpha}_{\Lp^d(D_1)}\,.
\end{equation}
where
\begin{equation*}
\kappa_2 \,\df\, \Tilde{C}_{1}(p)\biggl((1+\Tilde{C}_{2})\Bigl(\sup_{D_2}\Vo+\,\kappa_1\,
\abs{D_2}^{\nicefrac{1}{d}}\Bigr) 
+ \norm{\rc^\epsilon_{v^{\epsilon}_\alpha}}_{\Lp^d(D_1)}\biggr)\,.
\end{equation*}
By \cref{PL5.2A,PL5.2B}, we have
\begin{align*}
\sup_{D_2}\, \varphi^\epsilon_\alpha 
\,\le\,\kappa_2 + \norm{\Vo + 3\sV}_{\Lp^{\infty}(D_2)} + 
\Tilde{C}_{1}(p)\,\norm{\widetilde\cI\varphi^\epsilon_\alpha}_{\Lp^d(D_1)}\,.
\end{align*}
Hence, either $\sup_{D_2}\, \varphi^\epsilon_\alpha \le 2\kappa_2 + 
2\norm{\Vo + 3\sV}_{\Lp^{\infty}(D_2)}$,
which directly implies \cref{PL5.1A}, or
\begin{equation}\label{PL5.2C}
\sup_{D_2}\,\varphi^\epsilon_\alpha \,\le\,
2\Tilde{C}_{1}(p)\,\norm{\widetilde\cI\varphi^\epsilon_\alpha}_{\Lp^d(D_1)}\,.
\end{equation}
Suppose that \cref{PL5.2C} is the case.
By \cref{PL5.2A}, we have the estimate
\begin{equation}\label{PL5.2D}
\widetilde{\cI}(\Ind_{D_2^c}\varphi^{\epsilon}_{\alpha})(x) \,\le\,
\biggl(\sup_{B_{r_\circ}}\,\varphi^{\epsilon}_{\alpha}\biggr)\nu(D^c_2)
+ \widetilde{\cI}\bigl(\Ind_{D^c_2} (\Vo + 3\sV)\bigr)(x) \qquad \forall x\in D_1\,.
\end{equation}
Thus, by \cref{PL5.2B,PL5.2C,PL5.2D}, we obtain
\begin{equation*}
\sup_{D_1}\,\widetilde{\cI}\varphi^{\epsilon}_{\alpha} \,\le\,
\kappa_2 \bm\nu +
3\Tilde{C}_1(p)\bm\nu\,\norm{\widetilde\cI\varphi^\epsilon_\alpha}_{\Lp^d(D_1)}
+ \norm{\widetilde{\cI}\bigl(\Ind_{D^c_2} (\Vo + 3\sV)\bigr)}_{\Lp^{\infty}(D_1)}\,.
\end{equation*}
Again we distinguish two cases.
If
\begin{equation*}
\sup_{D_1}\,\widetilde{\cI}\varphi^{\epsilon}_{\alpha} \,\le\,
6\Tilde{C}_1(p)\bm\nu\,
\norm{\widetilde\cI\varphi^\epsilon_\alpha}_{\Lp^d(D_1)}\,,
\end{equation*}
then the proof is the same as in \cite{ACPZ19}*{Theorem~3.3}.
It remains to consider the case
\begin{equation}\label{PL5.2E}
\sup_{D_1}\,\widetilde{\cI}\varphi^{\epsilon}_{\alpha}  \,\le\, 2\kappa_2 \bm\nu
+ 2\norm{\widetilde{\cI}\bigl(\Ind_{D^c_2} (\Vo + 3\sV)\bigr)}_{\Lp^{\infty}(D_1)}\,.
\end{equation}
Let $\Tilde{\phi}_\alpha^\epsilon$ be the solution of the Dirichlet problem
\begin{equation*}
\widetilde\Lg_{v^{\epsilon}_{\alpha}} \Tilde{\phi}^\epsilon_\alpha - 
\alpha\Tilde{\phi}^\epsilon_\alpha  \,=\, 0 \qquad \text{in }D_1
\quad \text{and} \quad \Tilde{\phi}^\epsilon_\alpha 
\,=\, \varphi^{\epsilon}_\alpha \quad \text{on }\partial D_1\,. 
\end{equation*}
By Harnack's inequality, we have $\Tilde\phi^\epsilon_\alpha \le 
\Tilde{C}_H\Tilde{\phi}^\epsilon_{\alpha}(\Hat{x}^\epsilon_\alpha)$ 
for all $x\in B_{r_\circ}$, $\alpha\in(0,1)$, and $\epsilon\in[0,\Tilde\kappa^{-1})$.
Thus
\begin{align*}
\widetilde\Lg_{v^{\epsilon}_\alpha}
(\varphi^\epsilon_\alpha - \Tilde{\phi}^{\epsilon}_\alpha)
- \alpha(\varphi^\epsilon_\alpha - \Tilde{\phi}^{\epsilon}_\alpha)
&\,=\, -\widetilde{\cI}\varphi^\epsilon_\alpha 
+ \alpha V^\epsilon_\alpha(\Hat{x}^\epsilon_\alpha) - \rc^{\epsilon} \\
&\,\ge\, -\sup_{D_1}\,\widetilde{\cI}\varphi^\epsilon_\alpha 
+ \alpha V^\epsilon_\alpha(\Hat{x}^\epsilon_\alpha) - \rc^{\epsilon}
\qquad \text{in }D_1\,,
\end{align*}
and $\varphi^\epsilon_\alpha - \Tilde{\phi}^{\epsilon}_\alpha = 0$ on $\partial D_1$.
On the other hand, we have
\begin{equation}\label{PL5.2F}
\begin{aligned}
\widetilde\Lg_{v^{\epsilon}_\alpha} (\Tilde{\phi}^{\epsilon}_\alpha
- \varphi^\epsilon_\alpha)
- \alpha(\Tilde{\phi}^{\epsilon}_\alpha - \varphi^\epsilon_\alpha)
&\,=\, \widetilde{\cI}\varphi^\epsilon_\alpha 
- \alpha V^\epsilon_\alpha(\Hat{x}^\epsilon_\alpha) + \rc^{\epsilon} \\
&\,\ge\, \inf_{D_1}\,\widetilde{\cI}\varphi^\epsilon_\alpha 
- \alpha V^\epsilon_\alpha(\Hat{x}^\epsilon_\alpha) + \rc^{\epsilon}
\qquad \text{in }D_1\,,
\end{aligned}
\end{equation}
Using \cref{EL5.1A}, we obtain 
\begin{equation}\label{PL5.2G}
\inf_{D_1}\,\widetilde{\cI}\varphi^\epsilon_\alpha \,\ge\,
- \sup_{D_1}\,\widetilde{\cI}\Vo\,.
\end{equation}
Since $\widetilde\cI\Vo \in \Lpl^{\infty}(\RR^d)$,
applying the ABP weak maximum principle in \cite{GilTru}*{Theorem~9.1}
to \cref{PL5.2F}, 
and using \cref{PL5.2E,PL5.2G}, we obtain
$\norm{\varphi_\alpha^\epsilon - \Tilde{\phi}^\epsilon_\alpha}_{\Lp^\infty(D_1)} 
\le \Tilde{C}_0$ for some constant $\Tilde{C}_0$ which does not
depend on $\alpha\in(0,1)$ and $\epsilon\in[0,\Tilde\kappa^{-1})$.
Thus, employing \cite{AA-Harnack}*{Corollary~2.2}
as done in \cite{ACPZ19}*{Theorem~3.3},
we establish \cref{PL5.1A}. This completes the proof.
\end{proof}

In \cref{T5.2} which follows, we derive 
the HJB equation for the ergodic control problem,
and the corresponding characterization of optimal Markov controls.
Compared to \cite[Theorem~4.5]{ACPZ19},
the important difference here is that the solutions to the HJB equation may not be
bounded from below in $\RR^d$, since the running cost function is not near-monotone. 
As a consequence, \cite[Lemma~3.6.9]{ABG12} cannot be applied here directly to establish
the stochastic representation of the solutions, and prove uniqueness.
We let $V_{\alpha} \df\ V_{\alpha}^\epsilon\bigr|_{\epsilon=0}$.

%%%%%%%%%%%%%%%%%%%%%%%%%%%%%%%%%%%%%%%%%%%%%%%%%%%%%%%%%%%%%%%%%%%%%%%%%%%%%%%
\begin{theorem}\label{T5.2}
Grant \cref{A2.1,A2.2}. Then
\begin{enumerate}
\item[\ttup a]
As $\alpha\searrow 0$,  
$\widetilde{V}_{\alpha} \df V_\alpha - V_\alpha(0)$ converges 
in $\Cc^{1,\rho}$ with $\rho\in (0,1)$, 
uniformly on compact sets, to a function $V_*\in\Sobl^{2,p}(\RR^d)$ for
any $p>1$,
which satisfies $V^-_*\in\sorder(\Vo)$, and
\begin{equation}\label{ET5.2A} 
\min_{u\in\Act}\, \bigl[\Ag_u V_*(x) + \rc(x,u)\bigr] \,=\, \varrho_* 
\quad\text{a.e.\ in\ }\Rd\,.
\end{equation}

\item[\ttup b]
A control $v\in\Ussm$ is optimal for the ergodic control problem with $\rc$ 
if and only if
it is an a.e.\ measurable selector from the minimizer in \cref{ET5.2A}.

\item[\ttup c]
Let $\Ussmb \df \{v\in\Ussm \colon \varrho_v < \infty \}$.
The function $V_*$ is the unique solution (up to an additive constant)
to the equation
$\min_{u\in\Act}\,[\Ag_u V_*(x) + \rc(x,u)] = \varrho$ a.e.\ on $\Rd$,
with $\varrho\le \varrho_*$,
which satisfies
$V^-_*\in\sorder({\Vo})$ and $V_*(0)=0$.
In addition, it
has the stochastic representation
\begin{equation}\label{ET5.2B}
V_*(x) \,=\, \lim_{r\searrow 0}\,
\inf_{v\in\Ussmb}\, \Exp_x^{v} \biggl[
\int_0^{\uuptau_r}\bigl(\rc_{v}(X_s) - \varrho_* \bigr)\D{s} \biggr]\,.
\end{equation}
\end{enumerate}
\end{theorem}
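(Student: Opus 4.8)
The plan is to run the vanishing discount argument, using the uniform-in-$\alpha$ estimates from \cref{L5.1,L5.2} to extract a limit, and then to pin down the limit via stochastic representations.

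\medskip
\textbf{Part (a).} First I would pass to the limit. By \cref{L5.2}, $\osc_{B_R}\widetilde V_\alpha$ is bounded uniformly in $\alpha\in(0,1)$ and in $\epsilon$; setting $\epsilon=0$ and using $\widetilde V_\alpha(0)=0$ this gives a uniform local bound on $\widetilde V_\alpha$. Feeding this into the HJB equation \cref{ET5.1A} (with $\epsilon=0$), the interior $\Sob^{2,p}$ estimates for the associated linear operator (as in \cite{ACPZ19}*{Theorem~3.3} together with the gradient/$\Cc^{2,\alpha}$ regularity in \cref{L5.3,T5.3}, and controlling the nonlocal term $\widetilde\cI\widetilde V_\alpha$ via \cref{EL5.1A,EL5.1B}) yield uniform local $\Sob^{2,p}$ bounds, hence $\Cc^{1,\rho}_{\mathrm{loc}}$ precompactness. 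Along a subsequence $\alpha_n\searrow0$, $\widetilde V_{\alpha_n}\to V_*$ in $\Cc^{1,\rho}_{\mathrm{loc}}$ with $V_*\in\Sobl^{2,p}(\RR^d)$. Since $\alpha V_\alpha(0)\to\varrho_*$ — this needs an argument: the upper bound $\limsup\alpha V_\alpha(0)\le\varrho_*$ comes from \cref{PL5.1A} applied at a point, and the lower bound $\liminf\alpha V_\alpha(0)\ge\varrho_*$ from the stochastic representation \cref{ET5.1B} together with \cref{T4.1} — and $\alpha\widetilde V_\alpha\to0$ locally uniformly (by the local bound), passing to the limit in \cref{ET5.1A} gives \cref{ET5.2A}. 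The lower bound $V_*^-\in\sorder(\Vo)$ is obtained by letting $\delta\searrow0$ in \cref{EL5.1A}: for each $\delta$ there is $\tilde r(\delta)$ with $\widetilde V_\alpha\ge \inf_{B_{\tilde r}}\widetilde V_\alpha-\delta\Vo$ on $B_{\tilde r}^c$; passing to the limit in $\alpha$ and using the uniform oscillation bound on $B_{\tilde r(\delta)}$ to control $\inf_{B_{\tilde r}}\widetilde V_\alpha$, we get $V_*\ge -C_\delta-\delta\Vo$, i.e. $\limsup_{|x|\to\infty}V_*^-(x)/(1+\Vo(x))\le\delta$ for every $\delta>0$.

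\medskip
\textbf{Part (b).} Verification of optimality. Since $V_*^-\in\sorder(\Vo)$ and $\widetilde\cI\Vo\in\Lpl^\infty$ (from \cref{EA2.2A,E4.1A}), the It\^o formula \cref{E-Ito} applies to $V_*$. For any $v\in\Ussmb$, integrating \cref{ET5.2A} along the process, dividing by $T$ and using $\frac1T\Exp_x^v[V_*(X_T)]\to0$ (which follows because $V_*^+\le \sorder(\Vo)+\text{loc.\ bdd.}$ grows slower than $\Vo+3\sV$ by \cref{EL5.1B}, and $\mu_v$ integrates the running cost) shows $\varrho_v\ge\varrho_*$ with equality iff $v$ is a.e.\ a selector from the minimizer; the reverse inequality for a selector uses that such $v$ lies in $\Ussmb$ (a Foster--Lyapunov/stability argument combining \cref{EA2.1A,ET5.2A}). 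This is essentially the argument of \cite[Theorem~4.5]{ACPZ19} adapted to the non-near-monotone case, the adaptation being exactly the $\sorder(\Vo)$ bound.

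\medskip
\textbf{Part (c): uniqueness and stochastic representation.} This is the main obstacle, because without near-monotonicity $V_*$ need not be bounded below, so \cite[Lemma~3.6.9]{ABG12} does not apply directly. The plan is: given any solution $(\hat V,\varrho)$ with $\varrho\le\varrho_*$, $\hat V^-\in\sorder(\Vo)$, $\hat V(0)=0$, first show $\varrho=\varrho_*$ (the inequality $\varrho\ge\varrho_*$ again by the verification argument applied to a selector, since by part (b) that selector is optimal). Then derive the representation: fix $v$ a selector from the minimizer of $\hat V$; applying \cref{E-Ito} on $B_r^c$ up to $\uuptau_r$ (note $\uuptau_r$ is the exit time from $B_r^c$, i.e.\ the hitting time of $\Bar B_r$), one gets
\begin{equation*}
\hat V(x) \,=\, \Exp_x^{v}\biggl[\int_0^{\uuptau_r}\bigl(\rc_v(X_s)-\varrho_*\bigr)\,\D s\biggr] + \Exp_x^v\bigl[\hat V(X_{\uuptau_r})\bigr]\,,
\end{equation*}
and similarly for any competing control $w\in\Ussmb$ one gets the corresponding inequality with ``$\ge$''; as $r\searrow0$, $\Exp_x^v[\hat V(X_{\uuptau_r})]\to \hat V(x)$ would be circular, so instead one sends $r\searrow0$ after subtracting, using that $X$ spends vanishing time near a point (the process is a nondegenerate diffusion plus finite-activity jumps, so $\uuptau_r\to0$ a.s.\ as $r\searrow0$ from $x\notin\{0\}$, while from $x=0$ one interprets the limit directly) — this yields \cref{ET5.2B} and shows the right-hand side is independent of the chosen solution, giving uniqueness up to the additive constant fixed by $\hat V(0)=0$. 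The delicate point throughout is justifying the limits and the vanishing of boundary terms at infinity: for that one uses \cref{EL5.1A,EL5.1B} to sandwich $\hat V$ between $-C-\delta\Vo$ and $C'+\Vo+3\sV$, together with the fact that any $w\in\Ussmb$ has $\mu_w(\Vo)<\infty$ and $\mu_w(\sV)<\infty$, which makes $\frac1T\Exp^w_x[\hat V(X_T)]\to0$ and legitimizes the optional-sampling/Fatou steps. I expect the bookkeeping for the nonlocal term $\widetilde\cI\hat V$ in these estimates — ensuring it stays in $\Lpl^d$ so that \cref{E-Ito} and the ABP principle apply — to be where most of the real work lies, paralleling the estimate flagged in the proof of \cref{L5.2}.
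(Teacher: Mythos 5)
Your outline for parts (a) and (b) is broadly in line with the paper's proof: the paper also gets $\alpha V_\alpha(0)\to\varrho_*$ via \cref{L5.2} (citing the argument of Theorem~3.6 in \cite{ABP15}), bounds $\widetilde V_\alpha$ locally using \cref{L5.1} to control the nonlocal term, feeds this into the interior $\Sob^{2,p}$ estimate of \cite{GilTru}*{Theorem~9.11}, and obtains $V_*^-\in\sorder(\Vo)$ by letting $\alpha\searrow0$ in \cref{EL5.1A}. (Your invocations of \cref{L5.3,T5.3} are out of place here, since those depend on \cref{A5.1}, which \cref{T5.2} does not assume; the paper's cleaner route is the $\Lpl^\infty$ bound on $\widetilde\cI\abs{\widetilde V_\alpha}$ from \cref{PT5.2B}.)

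Part (c), however, has a genuine error in the key step. You write that $\uuptau_r\to0$ a.s.\ as $r\searrow0$ from $x\neq0$; this is backwards. By definition $\uuptau_r=\uptau(B_r^c)$ is the first time the process \emph{enters} $\Bar B_r$, so as $r$ shrinks $\uuptau_r$ \emph{increases}, and for a nondegenerate diffusion in $d\ge2$ it tends to $+\infty$ a.s.\ because single points are polar. So your plan of ``sending $r\searrow0$ after subtracting, using that $X$ spends vanishing time near a point'' collapses — the $r\searrow0$ limit in \cref{ET5.2B} is not a short-time limit. Moreover, the real technical difficulty, which your sketch does not address, is the $R\to\infty$ localization needed to apply It\^o's formula on $\uuptau_r\wedge\uptau_R$ and then dispose of the exit-at-$R$ boundary term for a solution that is not bounded below. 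The paper handles this by introducing $\Phi=V_*+\Vo$, using \cref{E4.1A,ET5.2A} to show $\Ag_{v_0}\Phi\le\Ind_{\sB_\circ}+\varrho_*$, hence $\Exp_x^{v_0}[\Phi(X_{\uuptau_r\wedge\uptau_R})]\le(\varrho_*+1)\Exp_x^{v_0}[\uuptau_r]+\Phi(x)$, and then bounding $\Exp_x^{v_0}[V_*^-(X_{\uptau_R})\Ind(\uuptau_r\ge\uptau_R)]$ by this quantity times $\sup_{B_R^c}V_*^-/\Phi$, which vanishes because $V_*^-\in\sorder(\Phi)$. That is the mechanism replacing \cite{ABG12}*{Lemma~3.6.9} when $V_*$ is unbounded below, and it has no counterpart in your proposal. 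Without it, the ``$\ge$'' half of \cref{ET5.2B} (and hence uniqueness) is not established.
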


\begin{proof}
We first prove (a).
By \cref{ET5.1A}, we have 
\begin{equation}\label{PT5.2A}
\min_{u\in\Act}\, \bigl[\Ag_u \widetilde{V}_{\alpha}(x) + \rc(x,u)\bigr] \,=\, 
\alpha\widetilde{V}_{\alpha}(x) + \alpha V_{\alpha}(0) \quad\text{a.e.\ in\ }\Rd\,. 
\end{equation}
The limit
$\lim_{\alpha \searrow 0}\alpha V_{\alpha}(0) = \varrho_*$
follows  as in the proof of Theorem~3.6 in \cite{ABP15}
using \cref{L5.2}.

We fix an arbitrary ball $\sB$, and using \cref{EL5.1A,EL5.1B}, we obtain
\begin{equation}\label{PT5.2B}
\abs{\widetilde{V}_{\alpha}(x)} \,\le\, 
\norm{\widetilde{V}_\alpha}_{L^\infty(\sB)}+ \Vo(x) + 3\sV(x) 
\qquad \forall x\in\RR^d\,.
\end{equation}
By \cref{PT5.2B,E4.1A,EA2.2A}, 
we have $\widetilde{\cI}\abs{\widetilde{V}_\alpha}\in\Lpl^{\infty}(\RR^d)$.
Let $v_\alpha$ be a measurable selector for the minimizer of \cref{PT5.2A}.
Then, applying the interior estimate in \cite{GilTru}*{Theorem~9.11}, 
we obtain
\begin{equation*}
\norm{\widetilde{V}_{\alpha}}_{\Sob^{2,p}(B_R)} \,\le\,
C\Bigl( \norm{\widetilde{V}_{\alpha}}_{L^p(B_{2R})}  
+ \norm{\alpha V_\alpha(0)  -  \rc_{v_\alpha} 
- \widetilde{\cI}\widetilde{V}_{\alpha}}_{L^p(B_{2R})}\Bigr)\,, 
\end{equation*}
where $C \equiv C(R,p)$. 
Hence, $\sup_{\alpha\in(0,1)}\norm{\widetilde{V}_{\alpha}}_{\Sob^{2,p}(B_R)} < \infty$.
Thus, following a standard argument (see \cite{ABG12}*{Lemma~3.5.4}), 
for any sequence $\alpha_n\searrow 0$, the functions 
$\{\widetilde{V}_{\alpha_n}\}$ converge
along a subsequence in $\Cc^{1,\rho}$ with $\rho\in(0,1)$, 
uniformly on compact sets, to $V_*$.
Since $R$ is arbitrary, 
this proves \cref{ET5.2A}.
Letting $\alpha\searrow 0$ 
in \cref{EL5.1A}, which holds for all $\delta>0$, with $\Tilde{r}$ depending
only on $\delta$, we obtain (with $\epsilon = 0$)
\begin{equation}\label{PT5.2C}
V^-_* \,\in\, \sorder(\Vo)\,.
\end{equation}

Concerning part (b), necessity follows by \cite{ACPZ19}*{Theorem~3.5}.
Sufficiency follows exactly as in the proof of
\cite{ABP15}*{Theorem~3.4\,(b)} using \cref{PT5.2C} and part (a).

It remains to establish uniqueness and the stochastic representation
as stated in part (c).
By \cref{PT5.2B}, we also have $\widetilde{\cI}V_*\in\Lpl^{\infty}(\RR^d)$.   
Following the same arguments as in \cite{ABG12}*{Lemma~3.6.9},
we have
\begin{equation}\label{PT5.2D}
V_*(x) \,\le\, \liminf_{r\searrow0}\,\inf_{v\in\Ussmb}
\Exp_x^v\biggl[\int_0^{\uuptau_r}(\rc_v(X_s) - \varrho_*)\,\D{s}\biggr]\,.
\end{equation}
On the other hand, applying It\^{o}'s formula, we obtain
\begin{equation}\label{PT5.2E}
V_*(x) \,=\, \Exp^{v_0}_x\biggl[ \int_0^{\uuptau_r\wedge\uptau_R}
\bigl(\rc_{v_0}(X_s) - \varrho_* \bigr)\,\D{s}
+ V_*(X_{\uuptau_r\wedge\uptau_R})\biggr]\,,
\end{equation}
with $v_0$ a measurable selector for the minimizer of \cref{ET5.2A}.
Note that
\begin{equation*}
V_*(X_{\uuptau_r\wedge\uptau_R}) \,=\,
V_*(X_{\uuptau_r})\Ind(\uuptau_r<\uptau_R)
+ V_*(X_{\uptau_R})\Ind(\uuptau_r\ge\uptau_R)\,.
\end{equation*}
We next show that 
\begin{equation}\label{PT5.2F}
\limsup_{R\nearrow\infty}\,\Exp^{v_0}_x
[ V^{-}_*(X_{\uptau_R})\Ind(\uuptau_r\ge\uptau_R)]\,=\,0\,.
\end{equation}
Let $\Phi\df V_* + \Vo$.
It follows by \cref{PT5.2C} that
$\Phi$ is coercive and $V^{-}_*\in\sorder(\Phi)$.
It is also evident by \cref{E4.1A,ET5.2A} that
$\Ag_{v_0}\Phi(x) \le \Ind_{\sB_\circ}(x) + \varrho_*$.
Then, applying It\^{o}'s formula, we obtain
\begin{equation*}
\Exp_x^{v_0}\bigl[\Phi(X_{\uuptau_r\wedge\uptau_R})\bigr]
\,\le\,  (\varrho_*+1) \Exp_x^{v_0}[\uuptau_r\wedge\uptau_R] + \Phi(x)
\,\le\, (\varrho_*+1) \Exp_x^{v_0}[\uuptau_r] + \Phi(x)\,.
\end{equation*}
We also have 
\begin{equation}\label{PT5.2G}
\Exp_x^{v_0}\bigl[V^{-}_*(X_{\uptau_R})\Ind(\uuptau_r\ge\uptau_R)\bigr]
\,\le\, \Bigl((\varrho_*+1) \Exp_x^{v_0}[\uuptau_r] + \Phi(x)\Bigr)\,
\sup_{y\in B_R^c}\, \frac{V^{-}_*(y)}{\Phi(y)}\,.
\end{equation}
Note that $\Exp_x^{v_0}[\uuptau_r]$ is finite since $v_0\in\Ussm$ by \cref{T4.1}.
Since $V_*^-\in\sorder(\Phi)$, 
it follows that
 $\sup_{y\in B_R^c}\, \frac{V^{-}_*(y)}{\Phi(y)}$ vanishes as $R\to\infty$, 
which in turn implies that
the right hand side\ of \cref{PT5.2G} converges to $0$ as $R\to\infty$.
This proves \cref{PT5.2F}.
Letting $R\to\infty$ in \cref{PT5.2E},
it follows by Fatou's lemma and \cref{PT5.2F} that
\begin{equation*}
V_*(x) \,\ge\, \Exp^{v_0}_x\biggl[ \int_0^{\uuptau_r}
\bigl(\rc_{v_0}(X_s) - \varrho_* \bigr)\,\D{s}
+ V_*(X_{\uuptau_r})\biggr]\,,
\end{equation*}
and we obtain
\begin{equation}\label{PT5.2H}
V_*(x) \,\ge\, \limsup_{r\searrow0}\,\inf_{v\in\Ussmb}
\Exp_x^v\biggl[\int_0^{\uuptau_r}(\rc_v(X_s) - \varrho_*)\,\D{s}\biggr]\,,
\end{equation}
which together with \cref{PT5.2D} implies \cref{ET5.2B}.
Note that, by the argument above, \cref{PT5.2H} holds for any solution $V$
of \cref{ET5.2A} which satisfies $V^-\in\sorder({\Vo})$.
Thus, if $V$ is any other solution, we have $V_*\le V$ and equality
follows by the strong maximum principle.
This completes the proof.
\end{proof}

%%%%%%%%%%%%%%%%%%%%%%%%%%%%%%%%%%%%%%%%%%%%%%%%%%%%%%%%%%%%%%%%%%%%%%%%%%%%%%%
\subsection{Regularity of solutions of the HJB}

In this section, we examine the regularity of solutions of the HJB equations
in \cref{T5.1,T5.2}.
If the L\'{e}vy measure $\nu$ has a compact support,
then it follows by the elliptic regularity \cite[Theorem~9.19]{GilTru} that
the solutions to the HJB equations are in $\Cc^{2,r}(\RR^d)$ for any $r\in(0,1)$.
See also Remark~3.4 in \cite{ACPZ19}.

We need the following gradient estimate which is also applicable
to a larger class of equations.

%%%%%%%%%%%%%%%%%%%%%%%%%%%%%%%%%%%%%%%%%%%%%%%%%%%%%%%%%%%%%%%%%%%%%%%%%%%%%%%%
\begin{lemma}\label{L5.3}
Let $\varphi \in\Sobl^{2,p}(\RR^d)$, $p>d$, be a strong solution, 
having at most polynomial growth of degree $m>0$, to the equation
\begin{equation}\label{EL5.3A} 
a^{ij}(x)\partial_{ij}\varphi(x) + b^{i}(x)\partial_i\varphi(x) + c(x)\varphi(x)
+ \widetilde{\cI}\varphi(x) \,=\, f(x) \quad \text{on } \RR^d \,, 
\end{equation}
where
\begin{enumerate}
\item[\ttup i]
the matrix $a$ is bounded, Lipschitz continuous
on $\RR^d$ and uniformly elliptic;
\item[\ttup{ii}]
the coefficients $b$ and $c$ are locally bounded and
measurable, with $b$ having at most linear growth and $c$ having at most quadratic growth;
\item[\ttup{iii}]
the function $f$ has at most polynomial growth of degree $\kappa$ with 
$\kappa\in(0,m+2]$;
\item[\ttup{iv}]
the L\'evy measure $\nu$ of the nonlocal operator $\widetilde{\cI}$ 
is finite and satisfies $\int_{\RR^d}\abs{z}^m\,\nu(\D{z}) < \infty$.
\end{enumerate}
Then, $\abs{\grad{\varphi}(x)} \in \order(\abs{x}^{m+1})$.
\end{lemma}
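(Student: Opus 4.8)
The plan is to use a scaling (blow-up) argument to convert the desired growth estimate on $\grad\varphi$ into an interior gradient estimate for a family of rescaled solutions on a fixed ball, to which standard local $\Sob^{2,p}$ estimates apply. Fix $x_0\in\RR^d$ with $\abs{x_0}$ large, set $r = \abs{x_0}$, and define the rescaled function $\varphi_r(y) \df r^{-m}\,\varphi(x_0 + r y)$ for $y\in B_1$. A direct computation shows that $\varphi_r$ satisfies an equation of the same structural type as \cref{EL5.3A}, namely $a^{ij}_r(y)\partial_{ij}\varphi_r(y) + b^i_r(y)\partial_i\varphi_r(y) + c_r(y)\varphi_r(y) + \widetilde\cI_r\varphi_r(y) = f_r(y)$ on $B_1$, where $a_r(y) = a(x_0+ry)$, $b_r(y) = r\,b(x_0+ry)$, $c_r(y) = r^2 c(x_0+ry)$, $f_r(y) = r^{2-m} f(x_0+ry)$, and the rescaled L\'evy operator $\widetilde\cI_r\psi(y) = \int_{\RR^d}\psi(y + z/r)\,\nu(\D z)$ has L\'evy measure the pushforward of $\nu$ under $z\mapsto z/r$. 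The point of the exponents is that, by hypotheses \ttup i--\ttup{iii}, the coefficients $a_r$ are uniformly elliptic and uniformly Lipschitz (with constants independent of $x_0$), $b_r$ is uniformly bounded on $B_1$ (linear growth of $b$ absorbs one factor of $r$), $c_r$ is uniformly bounded on $B_1$ (quadratic growth of $c$ absorbs two factors of $r$), and $f_r$ is uniformly bounded on $B_1$ because $\kappa\le m+2$ gives $r^{2-m}\abs{f(x_0+ry)} \lesssim r^{2-m}(1+r)^\kappa \lesssim 1$ for $\abs{x_0}\ge 1$, $y\in B_1$.

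Next I would control the two remaining terms: $\varphi_r$ itself and the nonlocal term $\widetilde\cI_r\varphi_r$, on $B_2$ say. The polynomial growth of degree $m$ of $\varphi$ gives $\abs{\varphi_r(y)} = r^{-m}\abs{\varphi(x_0+ry)} \lesssim r^{-m}(1+ \abs{x_0} + r\abs{y})^m \lesssim 1$ uniformly for $\abs{x_0}\ge 1$ and $y$ in a fixed ball, so $\norm{\varphi_r}_{L^\infty(B_2)}$ is bounded independently of $x_0$. For the nonlocal term, $\widetilde\cI_r\varphi_r(y) = r^{-m}\int_{\RR^d}\varphi(x_0 + ry + z)\,\nu(\D z)$, and using again the degree-$m$ growth of $\varphi$ together with $(1+\abs{x_0}+r\abs{y}+\abs{z})^m \lesssim (1+\abs{x_0})^m(1+\abs{z})^m$ for $y$ in a fixed ball, the hypothesis $\int\abs{z}^m\,\nu(\D z)<\infty$ (and finiteness of $\nu$) yields $\norm{\widetilde\cI_r\varphi_r}_{L^\infty(B_2)} \le C$ with $C$ independent of $x_0$. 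Now apply the interior $\Sob^{2,p}$ estimate \cite{GilTru}*{Theorem~9.11} to the equation for $\varphi_r$ on $B_1\subset B_2$: treating $f_r - b_r^i\partial_i\varphi_r - c_r\varphi_r - \widetilde\cI_r\varphi_r$ as the right-hand side, one first gets a bound by iterating, or more simply treats $b_r,c_r$ as part of the operator (they are bounded) and $f_r - \widetilde\cI_r\varphi_r$ as the forcing term, obtaining $\norm{\varphi_r}_{\Sob^{2,p}(B_{1/2})} \le C\bigl(\norm{\varphi_r}_{L^p(B_1)} + \norm{f_r - \widetilde\cI_r\varphi_r}_{L^p(B_1)}\bigr) \le C'$ with $p>d$ and $C'$ independent of $x_0$. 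By Morrey/Sobolev embedding $\Sob^{2,p}(B_{1/2})\hookrightarrow \Cc^{1,\rho}(B_{1/4})$ for $\rho = 1 - d/p$, hence $\abs{\grad\varphi_r(0)} \le C''$ uniformly in $x_0$.

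Finally, undoing the scaling: $\grad\varphi_r(y) = r^{1-m}\grad\varphi(x_0+ry)$, so evaluating at $y=0$ gives $\abs{\grad\varphi(x_0)} = r^{m-1}\abs{\grad\varphi_r(0)} \le C'' r^{m-1} = C'' \abs{x_0}^{m-1}$. Since $\abs{x_0}$ was an arbitrary large point, and $\grad\varphi$ is continuous (as $\varphi\in\Sobl^{2,p}$, $p>d$, so $\varphi\in\Cc^1_{\mathrm{loc}}$) hence locally bounded, combining the two regimes gives $\abs{\grad\varphi(x)} \le C(1 + \abs{x})^{m-1} \le C(1+\abs{x}^{m+1})$, i.e. $\abs{\grad\varphi}\in\order(\abs{x}^{m+1})$ — in fact the sharper $\order(\abs{x}^{m-1})$ when $m\ge 1$, but the stated conclusion follows a fortiori. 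I expect the main obstacle to be the careful bookkeeping in handling the nonlocal term under rescaling: unlike the local coefficients, $\widetilde\cI_r$ is a genuinely nonlocal operator whose kernel concentrates near the origin as $r\to\infty$, so one cannot simply pass it to the left-hand side and invoke a local estimate with uniform constants; instead one must keep it on the right and bound its $L^p(B_1)$ norm using the moment condition \ttup{iv}, which is exactly where that hypothesis is needed. A secondary technical point is making sure the $\Sob^{2,p}$-estimate constant in \cite{GilTru}*{Theorem~9.11} depends only on the ellipticity and Lipschitz bounds for $a_r$ and the $L^\infty$-bounds for $b_r, c_r$, all of which are uniform in $x_0$ by construction.
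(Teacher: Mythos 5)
Your scaling choice is wrong and the parenthetical justifications in the first paragraph have the sign of the effect backwards, so the proof as written does not go through. With $\varphi_r(y)=r^{-m}\varphi(x_0+ry)$, $r=\abs{x_0}$, and $\abs{y}\le 2$, the argument $x_0+ry$ has magnitude $\sim r$; since $b$ has linear growth, $\abs{b(x_0+ry)}=\order(r)$ and hence $\abs{b_r(y)}=r\abs{b(x_0+ry)}=\order(r^2)$, \emph{not} $\order(1)$. Similarly $\abs{c(x_0+ry)}=\order(r^2)$ so $\abs{c_r}=\order(r^4)$, and $\abs{f_r}=r^{2-m}\abs{f(x_0+ry)}=\order(r^{2-m+\kappa})$, which is $\order(r^4)$ when $\kappa=m+2$. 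The phrases ``linear growth of $b$ absorbs one factor of $r$'' and ``quadratic growth of $c$ absorbs two factors of $r$'' would be correct if the coefficients had linear/quadratic \emph{decay}; growth makes the rescaled coefficients blow up. Consequently the interior estimate from \cite{GilTru}*{Theorem~9.11}, whose constant depends on $\norm{b_r}_\infty$ and $\norm{c_r}_\infty$, does not give a bound uniform in $x_0$, and the forcing term is not uniformly bounded either. There is also an algebraic slip: after multiplying the equation evaluated at $x_0+ry$ by $r^{2-m}$, the nonlocal term becomes $r^{2-m}\,\widetilde\cI\varphi(x_0+ry)=r^2\,\widetilde\cI_r\varphi_r(y)$, so a factor $r^2$ is missing in front of $\widetilde\cI_r\varphi_r$ in your rescaled equation. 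Finally, the ``sharper'' conclusion $\order(\abs{x}^{m-1})$ you claim in passing is an artifact of these errors and is not attainable in this generality.

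The correct scaling --- the one the paper uses --- is $\Tilde\varphi(x)=\varphi\bigl(x/\abs{x_0}^{\nicefrac12}\bigr)$, a pure dilation with no recentering, with the interior estimate applied on the ball $B_2(x_0)$. Then the chain rule produces the rescaled drift $\Tilde b/\abs{x_0}^{\nicefrac12}$ and potential $\Tilde c/\abs{x_0}$, and because the argument $\zeta=x/\abs{x_0}^{\nicefrac12}$ has magnitude $\sim\abs{x_0}^{\nicefrac12}$ for $x\in B_2(x_0)$ (not $\sim\abs{x_0}$), the linear and quadratic growth of $b$ and $c$ are exactly cancelled: $\abs{b(\zeta)}/\abs{x_0}^{\nicefrac12}=\order(1)$ and $\abs{c(\zeta)}/\abs{x_0}=\order(1)$ uniformly. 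The remaining terms $\Tilde f/\abs{x_0}$, $\Tilde\varphi$, and the nonlocal term are all $\order(\abs{x_0}^{\nicefrac m2})$ on $B_2(x_0)$, giving $\norm{\grad\Tilde\varphi}_{\Lp^\infty(B_1(x_0))}=\order(\abs{x_0}^{\nicefrac m2})$. Unwinding via $\grad\varphi\bigl(x_0/\abs{x_0}^{\nicefrac12}\bigr)=\abs{x_0}^{\nicefrac12}\grad\Tilde\varphi(x_0)$, and writing $\abs{y}=\abs{x_0}^{\nicefrac12}$ for $y=x_0/\abs{x_0}^{\nicefrac12}$, yields precisely $\abs{\grad\varphi(y)}=\order(\abs{y}^{m+1})$. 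You should redo your argument with this scaling (or equivalently, a zoom-in of radius $\abs{x_0}^{-1}$ centered at $x_0$, which produces the same rescaled ball and estimates).
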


\begin{proof}
For any fixed $x_0\in\RR^d$, for which without loss of generality we assume
$\abs{x_0} \ge 1$,
we define the scaled variables
\begin{equation*}
\Tilde{\varphi}(x) \,\df\, \varphi\biggl(\frac{x}{\abs{x_0}^{\nicefrac{1}{2}}} \biggr)\,,
\end{equation*}
and similarly for $\Tilde{a}$, $\Tilde{b}$, $\Tilde{c}$, and $\Tilde{f}$.
The equation in \cref{EL5.3A} then takes the form
\begin{equation}\label{PL5.3A}
\Tilde{a}^{ij}(x) \partial_{ij}\Tilde{\varphi}(x)
+  \frac{\Tilde{b}^i(x)}{\abs{x_0}^{\nicefrac{1}{2}}}\, \partial_{i}\Tilde\varphi(x)
+ \frac{\Tilde{c}(x)}{\abs{x_0}}\Tilde\varphi(x) 
+  \widetilde{\cI}\varphi\biggl(\frac{x}{\abs{x_0}^{\nicefrac{1}{2}}}\biggr) 
\,=\,  \frac{\Tilde f(x)}{\abs{x_0}} \quad \text{on }\RR^d\,.
\end{equation} 
It is clear from (i)--(ii) that the coefficients $\Tilde{a}^{ij}$,
 $\abs{x_0}^{\nicefrac{-1}{2}}\Tilde{b}^i$ and $\abs{x_0}^{-1}\Tilde{c}$
are bounded in the ball $B_2(x_0)$, with a bound independent of $x_0$, and  that
the Lipschitz and ellipticity constants of the matrix $\Tilde{a}$
in $B_2(x_0)$ are independent of $x_0$.
Thus, it follows by \cref{PL5.3A} and the a priori estimate in
\cite{GilTru}*{Theorem~9.11} that, for any fixed $p>d$, we have
\begin{equation}\label{PL5.3B}
\norm{\Tilde\varphi}_{\Sob^{2,p}(B_1(x_0))}\,\le\,
C\Bigl(\bnorm{\Tilde\varphi}_{\Lp^{p}(B_2(x_0))} 
+ \bnorm{\widetilde{\cI}\varphi(\abs{x_0}^{\nicefrac{-1}{2}}\,\cdot\,)}_{\Lp^{p}(B_2(x_0))}
+ \abs{x_0}^{-1}\bnorm{\Tilde f}_{\Lp^{p}(B_2(x_0))}\Bigr)
\end{equation}
for some positive constant $C$ independent of $x_0$.
Since $\nu$ is finite and $\int_{\RR^d}\abs{z}^m\,\nu(\D{z}) < \infty$,
it follows that $\widetilde{\cI}\varphi$ has at most polynomial growth of degree $m$. 
Then, by the assumptions of the lemma, the right hand side of \cref{PL5.3B} is 
$\order\bigl(\abs{x_0}^{\nicefrac{m}{2}}\bigr)$. 
Therefore, by \cref{PL5.3B} and the compactness of the
Sobolev embedding
$\Sob^{2,p}\bigl(B_1(x_0)\bigr) \hookrightarrow \Cc^{1,r}\bigl(B_1(x_0)\bigr)$,
for $0 < r < 1 -\frac{d}{p}$, we obtain
\begin{equation}\label{PL5.3C}
\bnorm{\grad \tilde{\varphi}}_{\Lp^{\infty}(B_1(x_0))}\,\le\, 
C_0\bigl(1 + \abs{x_0}^{\nicefrac{m}{2}}\bigr) \qquad \forall\,x_0\in\RR^d
\end{equation}
for some positive constant $C_0$ independent of $x_0$.
On the other hand, 
\begin{equation*}
\grad{\varphi}\biggl(\frac{x_0}{\abs{x_0}^{\nicefrac{1}{2}}}\biggr) \,=\,
\abs{x_0}^{\nicefrac{1}{2}}\grad\Tilde\varphi(x_0) \qquad \forall\,x_0\in\RR^d\,,
\end{equation*}
which together with \cref{PL5.3C} imply
$\abs{\grad\varphi(x)} \in \order(\abs{x}^{m+1})$.
This completes the proof.
\end{proof}

Consider the following assumption on the growth of the coefficients
and the functions $\Vo$ and $\sV$.

%%%%%%%%%%%%%%%%%%%%%%%%%%%%%%%%%%%%%%%%%%%%%%%%%%%%%%%%%%%%%%%%%%%%%%%%%%%%%%%%
\begin{assumption}\label{A5.1}
\begin{enumerate}
\item[\ttup i]
The running cost function   
has at most polynomial growth of degree $m_\circ\ge 1$, that is,
$\rc(x,u) \le C_\circ (1 + \abs{x}^{m_\circ})$, 
for all $(x,u)\in\RR^d\times\Act$
and some positive constant $C_\circ$.

\item[\ttup{ii}]
\cref{A2.1,A2.2} hold with $\sV$ and $\Vo$ having at most polynomial
growth of degree $m_\circ$.

\item[\ttup{iii}]
The L\'evy measure $\nu$ satisfies
$\int_{\RR^d}\abs{z}^{m_\circ+ 1}\,\D{z} < \infty$.
\end{enumerate}
\end{assumption}

%%%%%%%%%%%%%%%%%%%%%%%%%%%%%%%%%%%%%%%%%%%%%%%%%%%%%%%%%%%%%%%%%%%%%%%%%%%%%%%%
\begin{remark}
Provided that $\int_{\RR^d}\abs{z}^{m+1}\,\nu(\D{z})<\infty$,
it is clear that \cref{A5.1} holds for the limiting controlled diffusion
in \cref{S3}.
\end{remark}

We have the following theorem.

%%%%%%%%%%%%%%%%%%%%%%%%%%%%%%%%%%%%%%%%%%%%%%%%%%%%%%%%%%%%%%%%%%%%%%%%%%%%%%%%
\begin{theorem}\label{T5.3}
Grant \cref{A5.1}. The solutions $V^\epsilon_\alpha$ 
of \cref{ET5.1A}, and $V_*$ of \cref{ET5.2A}  are in $\Cc^{2,r}(\RR^d)$
for any $r\in (0,1)$. 
\end{theorem}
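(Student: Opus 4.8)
The plan is to bootstrap regularity for the HJB solutions using the gradient estimate from \cref{L5.3}, together with the polynomial growth bounds already established in \cref{T5.1,T5.2}. First I would treat the $\alpha$-discounted case. Fix $u=v^\epsilon_\alpha$, a measurable selector from the minimizer of \cref{ET5.1A}; then $V^\epsilon_\alpha$ is a strong solution in $\Sobl^{2,p}(\RR^d)$ (any $p>1$) of the \emph{linear} nonlocal equation
\begin{equation*}
a^{ij}\partial_{ij}V^\epsilon_\alpha + \widetilde b^i_{v^\epsilon_\alpha}\partial_i V^\epsilon_\alpha - \alpha V^\epsilon_\alpha + \widetilde\cI V^\epsilon_\alpha \,=\, -\rc^\epsilon_{v^\epsilon_\alpha}\,,
\end{equation*}
which has the form \cref{EL5.3A} with $c\equiv -\alpha$ and $f = -\rc^\epsilon_{v^\epsilon_\alpha}$. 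Under \cref{A5.1}(i)--(ii), $\rc^\epsilon = \rc + \epsilon\widetilde F$ has at most polynomial growth of degree $m_\circ$, and by \cref{PT5.1A} so does $V^\epsilon_\alpha$ (with degree $m_\circ$ as well, since $\Vo,\sV$ do). Hypotheses (i)--(iv) of \cref{L5.3} are then met with $m = m_\circ$ and $\kappa = m_\circ \le m_\circ + 2$ (using \cref{A5.1}(iii) for the moment condition on $\nu$), so \cref{L5.3} yields $\abs{\grad V^\epsilon_\alpha(x)}\in\order(\abs{x}^{m_\circ+1})$.

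The key consequence is that the nonlocal term becomes H\"older continuous: writing
\begin{equation*}
\widetilde\cI V^\epsilon_\alpha(x) - \widetilde\cI V^\epsilon_\alpha(x') \,=\, \int_{\RR^d}\bigl(V^\epsilon_\alpha(x+z) - V^\epsilon_\alpha(x'+z)\bigr)\,\nu(\D z)\,,
\end{equation*}
the gradient bound gives $\abs{V^\epsilon_\alpha(x+z)-V^\epsilon_\alpha(x'+z)} \le C\abs{x-x'}\bigl(1 + (\abs{x}\vee\abs{x'})^{m_\circ+1} + \abs{z}^{m_\circ+1}\bigr)$ on any bounded set, so by dominated convergence (using $\int\abs{z}^{m_\circ+1}\,\nu(\D z)<\infty$ from \cref{A5.1}(iii)) the map $x\mapsto\widetilde\cI V^\epsilon_\alpha(x)$ is locally Lipschitz, hence locally H\"older. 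The running cost $\rc^\epsilon(\cdot,u)$ is locally Lipschitz uniformly in $u$, and then, since $V^\epsilon_\alpha$ solves the fully nonlinear local equation $\min_{u}[\widetilde\Lg_u V^\epsilon_\alpha + \rc^\epsilon(\cdot,u) + \widetilde\cI V^\epsilon_\alpha] = \alpha V^\epsilon_\alpha$ with the inhomogeneous term $\widetilde\cI V^\epsilon_\alpha$ now locally H\"older, we are in the setting of interior Schauder / elliptic regularity for HJB equations: \cite{GilTru}*{Theorem~9.19} (or the fully nonlinear Evans--Krylov theory, noting the operator is already linear once the selector is fixed) upgrades $V^\epsilon_\alpha$ to $\Cc^{2,r}(\RR^d)$ for every $r\in(0,1)$.

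For $V_*$ of \cref{ET5.2A} the argument is identical, applied to the limit equation. Fix a selector $v_0$; then $V_*$ solves $a^{ij}\partial_{ij}V_* + \widetilde b^i_{v_0}\partial_i V_* + \widetilde\cI V_* = \varrho_* - \rc_{v_0}$, which has the form \cref{EL5.3A} with $c\equiv 0$ and $f = \varrho_* - \rc_{v_0}$ of polynomial growth degree $m_\circ$. The growth of $V_*$ is controlled by \cref{PT5.2B} together with \cref{A5.1}(ii), giving polynomial growth of degree $m_\circ$. Applying \cref{L5.3} with $m = m_\circ$ gives $\abs{\grad V_*}\in\order(\abs{x}^{m_\circ+1})$, hence $\widetilde\cI V_*$ is locally H\"older by the same dominated-convergence argument, and the elliptic regularity result then gives $V_*\in\Cc^{2,r}(\RR^d)$ for all $r\in(0,1)$. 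The main obstacle I anticipate is purely bookkeeping: verifying carefully that the growth degrees line up so that \cref{L5.3} applies with the \emph{same} $m_\circ$ throughout (in particular that $\widetilde\cI\varphi$ does not pick up extra growth, which is exactly hypothesis (iv) of \cref{L5.3} combined with \cref{A5.1}(iii)), and checking that after fixing a measurable selector the resulting linear operator, although only measurable in the lower-order coefficients, still admits the $\Cc^{2,r}$ estimate once the inhomogeneous data $\rc$, $\widetilde\cI\varphi$ are H\"older — this is standard for operators in non-divergence form with $\Cc^{0,r}$ right-hand side and continuous (indeed Lipschitz) leading coefficients.
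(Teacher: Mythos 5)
Your argument follows the paper's proof essentially step for step: invoke \cref{L5.3} to obtain the gradient bound $\abs{\grad V}\in\order(\abs{x}^{m_\circ+1})$, use this together with \cref{A5.1}\,(iii) to conclude that $\widetilde\cI V$ is locally Lipschitz, and then bootstrap to $\Cc^{2,r}(\RR^d)$ via the elliptic regularity of \cite{GilTru}*{Theorem~9.19}. The only cosmetic slip is that after fixing a selector the zero-order coefficient in the linearized equation should be $-\bm\nu-\alpha$ rather than $-\alpha$, but this is harmless since \cref{L5.3} only requires $c$ to have at most quadratic growth.
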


\begin{proof}
Consider $V_*$. 
Since $V_*\in\order(\Vo + 3\sV)$ by \cref{PT5.2B},
then $\widetilde{\cI}V_*\in\Lpl^{\infty}(\RR^d)$ by \cref{EA2.1A,EA2.2A},  and 
$V_*$ has at most polynomial growth of degree $m_\circ$ by \cref{A5.1}.
We claim that $\widetilde{\cI}V_*$ is locally Lipschitz continuous.
To prove the claim, we fix some $x_0\in\Rd$, and write
\begin{equation}\label{PT5.3B}
\begin{aligned}
\babs{\widetilde\cI V_*(x_0 + x^{\prime}) - \widetilde{\cI} V_*(x_0 + x^{\prime\prime})} 
&\,\le\,\int_{\RR^d}\int_0^{1} 
 \babs{\bigl\langle\grad V_*(x_0 + \theta(x^{\prime} - x^{\prime\prime}) + y), 
x^{\prime} - x^{\prime\prime}\bigr\rangle} \,\D{\theta}\,\nu(\D y) \\
&\,\le\, \babs{x^{\prime} - x^{\prime\prime}}
\int_{\RR^d}\norm{\grad V_{*}}_{\Lp^{\infty}(B_2(x_0+y))} \,\nu(\D{y})
\end{aligned}
\end{equation}
for all $x',x''\in B_1$.
By \cref{ET5.2A} and \cref{L5.3} we obtain
\begin{equation*}
\norm{\grad V_{*}}_{\Lp^{\infty}(B_2(z))} \,\in\,
\order(\abs{z}^{m_\circ+1})\,,
\end{equation*}
which together with \cref{A5.1}\,(iii) and \cref{PT5.3B}
proves the claim.
It then follows by \cref{ET5.2A} and elliptic regularity
(see \cite{GilTru}*{Theorem~9.19}) that
$V_*$ is in $\Cc^{2,r}(\RR^d)$ for any $r\in (0,1)$.
The proof of the same property for $V^\epsilon_\alpha$ is completely analogous.
\end{proof}

%%%%%%%%%%%%%%%%%%%%%%%%%%%%%%%%%%%%%%%%%%%%%%%%%%%%%%%%%%%%%%%%%%%%%%%%%%%%%%%
\section{Pathwise optimality}\label{S6}

The pathwise formulation of the ergodic control problem
seeks to a.s. minimize over $U\in\Uadm$ 
\begin{equation*}
\limsup_{t\rightarrow\infty} \frac{1}{t}\int_0^{t}\rc(X_s^U,U_s)\,\D{s}\,,
\end{equation*}
where $\process{X^U}$ denotes the process governed by \cref{E-sde} under the control
$U$.
If the running cost is near-monotone
or a uniform stability condition holds,
it follows by \cite[Theorem~4.4]{ACPZ19} that  
every average cost optimal stationary Markov control is also optimal 
with respect to the pathwise ergodic criterion.
In this section, we extend the results of the diffusion model in \cite{AA17} to
the jump-diffusion model.
We modify \cref{A2.1} as follows.

%%%%%%%%%%%%%%%%%%%%%%%%%%%%%%%%%%%%%%%%%%%%%%%%%%%%%%%%%%%%%%%%%%%%%%%%%%%%%%%%%
\begin{assumption}\label{A6.1}
\Cref{A2.1} holds with $F=\phi\comp\Vo$, where
$\phi\colon\RR_+\to\RR_+$ is smooth increasing concave function,
satisfying $\phi(z)\to\infty$ as $z\to\infty$.
In addition,
the functions $\upsigma$ and $\frac{\grad\Vo}{1+\phi\comp\Vo}$
are bounded on $\Rd$.
\end{assumption}

%%%%%%%%%%%%%%%%%%%%%%%%%%%%%%%%%%%%%%%%%%%%%%%%%%%%%%%%%%%%%%%%%%%%%%%%%%%%%%%%
\begin{remark}
For the examples in \cref{S3}, 
we may rescale $F(x)$ in \cref{L3.1} 
so that $F(x) = C \sV_{Q,m}(x)$ for some positive constant $C$.
Thus, if we choose $\phi(x) = C x$, then \cref{A6.1} holds. 
\end{remark}

%%%%%%%%%%%%%%%%%%%%%%%%%%%%%%%%%%%%%%%%%%%%%%%%%%%%%%%%%%%%%%%%%%%%%%%%%%%%%%%%
\begin{theorem}%\label{T-path}
Grant \cref{A6.1}.
Then, if $U\in\Uadm$ is such that $\varrho_U(x)<\infty$,
the family of random empirical measures
$\{\zeta^U_{t}\,\colon t>0\}$ in \cref{D4.1} is tight a.s.
In particular,
every average cost optimal stationary Markov control is
pathwise optimal.
\end{theorem}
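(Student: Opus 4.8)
The plan is twofold: first to establish a.s.\ tightness of the random empirical measures $\{\zeta^{U}_{t}\}$ by a pathwise Foster--Lyapunov argument in the spirit of \cite{AA17}, and then to read off pathwise optimality by combining this with the occupation-measure description of $\varrho_*$ in \cref{T4.1} and with the decomposition of subsequential limits of empirical measures in \cite[Lemma~4.3]{ACPZ19}.

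For the tightness I would set $h(s)\df\int_{0}^{s}\frac{\D{r}}{1+\phi(r)}$ and $\Phi\df h\comp\Vo$. Then $\Phi\ge0$, $\Phi$ is coercive (a concave $\phi$ satisfies $\phi(r)\le C(1+r)$, so $h(s)\to\infty$), and $\grad\Phi=\frac{\grad\Vo}{1+\phi\comp\Vo}$ is bounded by \cref{A6.1}; in particular $\Phi$ has linear growth. Since $h$ is increasing and concave, using $h''\le0$ together with $\langle a\grad\Vo,\grad\Vo\rangle\ge0$ for the second-order term and the concavity inequality $h(\Vo(x+y))-h(\Vo(x))\le h'(\Vo(x))\bigl(\Vo(x+y)-\Vo(x)\bigr)$ for the nonlocal term, one gets $\Ag_{u}\Phi(x)\le h'(\Vo(x))\,\Ag_{u}\Vo(x)$ for all $(x,u)$; combined with \cref{EA2.1A} in the form \cref{E4.1A} and $F=\phi\comp\Vo$ this yields, on $\Rd\times\Act$,
\begin{equation*}
\Ag_{u}\Phi(x)\,\le\,\Ind_{\sB_{\circ}}(x)\,-\,\frac{\phi(\Vo(x))}{1+\phi(\Vo(x))}\,\Ind_{\widetilde{\cK}^{c}}(x,u)\,+\,\rc(x,u)\,\Ind_{\widetilde{\cK}}(x,u)\,.
\end{equation*}
Next, apply the It\^o formula \cref{E-Ito} (extended to $\Rd$; valid since $\Phi$ has bounded gradient, so $\widetilde{\cI}\Phi$ is finite and locally bounded) to $\Phi(X_{t})$ under $U$. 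The martingale part $M$ has a continuous component with integrand $\upsigma\transp\grad\Phi$, bounded because $\upsigma$ and $\grad\Phi$ are bounded, and a compensated-jump component whose jumps are dominated by $\norm{\grad\Phi}_{\infty}\abs{g(z)}$. Since $\int_{\RR^{l}_{*}}\abs{g(z)}^{2}\,\Pi(\D{z})<\infty$, the predictable quadratic variation of $M$ grows at most linearly in $t$ --- this is the crucial estimate on the nonlocal term --- and hence $M_{t}/t\to0$ a.s.\ by a dyadic maximal-inequality plus Borel--Cantelli argument. Dividing the It\^o identity by $t$, discarding $\Phi(X_{t})/t\ge0$, and using that $\varrho_{U}(x)<\infty$ forces $\liminf_{t\to\infty}\frac1t\int_{0}^{t}\rc(X_{s},U_{s})\,\D{s}<\infty$ a.s.\ (Fatou), one obtains an a.s.\ bound along a subsequence $t_{n}\to\infty$ realizing this $\liminf$.

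The delicate point is that, for the transformed function $\Phi$, the negative part of the drift is only $\tfrac{\phi\comp\Vo}{1+\phi\comp\Vo}\le1$, which is bounded and thus does not by itself control far-out mass, whereas on $\widetilde{\cK}\supset\cK\times\Act$ the drift merely costs $\rc$. By \cite[Lemma~4.3]{ACPZ19} it suffices to prove that every subsequential limit $\widehat{\zeta}=\delta\zeta'+(1-\delta)\zeta''$ in $\cP(\overline{\RR}^{d}\times\Act)$ of the sequence above, with $\zeta'\in\eom$ and $\zeta''$ concentrated on $\{\infty\}\times\Act$, has $\delta=1$ a.s. Escape of mass through $\cK$ is excluded because $\rc$ is coercive on $\cK$ (\cref{A2.1}) while $\liminf_{t}\frac1t\int_{0}^{t}\rc\,\D{s}<\infty$ a.s.; escape through $\cK^{c}$ is excluded using the uncompromised coercive drift $-\phi\comp\Vo$ of $\Vo$ itself on $\cK^{c}$, exploited through exit-time estimates on dyadic annuli in $\cK^{c}$ (together with the control of time spent in $\cK$ just obtained). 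Hence $\delta=1$ a.s., which is the asserted a.s.\ tightness. I expect this last step --- reconciling the bounded-gradient transformation needed for $M_{t}/t\to0$ with the coercive negative drift needed to turn a drift inequality into a Chebyshev bound on far-out mass, all while keeping control of the nonlocal contributions --- to be the main obstacle.

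For the ``in particular'', an average cost optimal stationary Markov control $v$ lies in $\Ussm$ by \cref{T4.1}, so the process is positive recurrent with invariant probability measure $\mu_{v}$ and, by the ergodic theorem, $\frac1t\int_{0}^{t}\rc_{v}(X_{s})\,\D{s}\to\mu_{v}(\rc_{v})=\varrho_*$ a.s.; in particular the conclusion of \cref{D4.1}-tightness is immediate for $v$ since $\zeta^{v}_{t}$ converges to a probability measure on $\Rd\times\Act$. For an arbitrary $U\in\Uadm$ with $\varrho_{U}(x)<\infty$, the a.s.\ tightness just established, together with \cite[Lemma~4.3]{ACPZ19} (each subsequential weak limit of $\zeta^{U}_{t}$ then lies in $\eom$), the lower semicontinuity of $\uppi\mapsto\uppi(\rc)$ under weak convergence, and the identity $\inf_{\uppi\in\eom}\uppi(\rc)=\varrho_*$ from \cref{T4.1}, give $\liminf_{t\to\infty}\frac1t\int_{0}^{t}\rc(X^{U}_{s},U_{s})\,\D{s}\ge\varrho_*$ a.s. Therefore $v$ a.s.\ minimizes $\limsup_{t\to\infty}\frac1t\int_{0}^{t}\rc(X^{U}_{s},U_{s})\,\D{s}$ over $U\in\Uadm$, i.e., it is pathwise optimal.
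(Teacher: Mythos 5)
Your overall framework matches the paper's: transform the Lyapunov function $\Vo$ by an increasing concave primitive $h$ to get a function with bounded gradient, apply the It\^o formula, kill the martingale parts (Brownian and compensated Poisson) using the bounded gradient plus $\int_{\RR^l_*}\abs{g(z)}^2\,\Pi(\D z)<\infty$, handle the region $\cK$ by coercivity of $\rc$ there, and handle $\cK^c$ via the residual negative drift. The concavity argument giving $\Ag_u(h\comp\Vo)\le h'(\Vo)\Ag_u\Vo$ is correct, both for the second-order term (via $h''\le 0$) and the nonlocal term (via the supporting-line inequality); it is also exactly what the paper's $F_{1,N},F_{2,N}\le 0$ express in a more explicit form.

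The genuine gap is that you work with a single transform $\Phi=h\comp\Vo$ (i.e.\ $h(s)=\int_0^s\frac{\D y}{1+\phi(y)}$), whereas the paper uses the one-parameter family $\varphi_N=\psi_N\comp\Vo$ with $\psi_N(z)=\int_0^z\frac{\D y}{N+\phi(y)}$, $N\in\NN$, and sends $N\to\infty$. With your single $\Phi$, after combining the It\^o identity with $\liminf_n\int\Ag_z\Phi\,\D\zeta^U_{t_n}\ge 0$ and the pointwise bound $\Ag_u\Phi\le h'(\Vo)\bigl[\Ind_{\sB_\circ}-\phi(\Vo)\Ind_{\widetilde\cK^c}+\rc\Ind_{\widetilde\cK}\bigr]$, the best you can extract is
\begin{equation*}
\limsup_{n}\,\int_{\cK^c\times\Act}\frac{\phi\bigl(\Vo(x)\bigr)}{1+\phi\bigl(\Vo(x)\bigr)}\,
\zeta^U_{t_n}(\D x,\D u)\,\le\,C
\end{equation*}
for some a.s.\ finite $C$, which is vacuous: the integrand is $\le 1$, so the left side is automatically $\le 1$, and the bound contains no smallness. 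You correctly flag this as the ``delicate point'' and the ``main obstacle,'' but the proposed remedy --- exit-time estimates on dyadic annuli in $\cK^c$ --- is left entirely undeveloped and is not how the paper proceeds. What makes the paper's argument work is precisely the extra parameter: since $\psi_N'(\Vo)=(N+\phi(\Vo))^{-1}\le 1/N$, the contributions of $\Ind_{\sB_\circ}$ and of $\rc\Ind_{\cK}$ are deflated by $1/N$, and the correction terms $F_{1,N}$ and $F_{2,N}$ (the latter estimated via $\norm{\grad\Vo/(N+\phi\comp\Vo)}_\infty$, which by \cref{A6.1} is bounded uniformly in $N$) are controlled uniformly. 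One then obtains $\limsup_n\int_{\cK^c\times\Act}\frac{\phi(\Vo)}{N+\phi(\Vo)}\,\D\zeta^U_{t_n}\le C/N$ with $C$ independent of $N$, and a.s.\ tightness on $\cK^c$ follows by sending $N\to\infty$ and applying Chebyshev on the region where $\phi(\Vo)\ge N$. Your ``in particular'' paragraph (tightness plus \cite[Lemma~4.3]{ACPZ19} plus lower semicontinuity plus $\inf_{\uppi\in\eom}\uppi(\rc)=\varrho_*$ from \cref{T4.1}) is correct and is the standard way to finish.
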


\begin{proof}
The technique is similar to that of \cite{AA17}*{Theorem~3.1},
but here we need to account for the nonlocal term.
We define $\psi_{N}\colon\RR_+\to\RR_+$,
for $N\in\NN$, by
\begin{equation*}
\psi_{N}(z) \,\df\, \int_{0}^{z} \frac{1}{N+\phi(y)}\,\D{y}\,,
\end{equation*}
and $\varphi_{N}\colon\Rd\to\RR_+$ by
$\varphi_{N}\df\psi_{N}\comp\Vo$, where `$\comp$'
denotes composition of functions.
Let $U\in\Uadm$ be some admissible control such that
\begin{equation*}
\limsup_{n\to\infty}\,\int_{\Rd\times\Act} \rc(x,z)\,\zeta^U_{t_n}(\D{x},\D{z})
\,<\,\infty\,,
\end{equation*}
for some increasing divergent sequence $\{t_n\}$.
Since $\rc$ is coercive on $\cK\times\Act$,
it follows that $\zeta^U_{t_n}$ is a.s.\ tight
when restricted to $\Bor(\cK\times\Act)$, the Borel $\sigma$-algebra
of $\cK\times\Act$.

Since $\grad\varphi_N$ and $\upsigma$ are bounded,
by It\^o's formula
and \cref{EA2.1A} we obtain
\begin{equation}\label{PT6.1A}
\begin{aligned}
\frac{\varphi_N(X_t) - \varphi_N(X_0)}{t}
&\,=\, \frac{1}{t}\,\int_0^t \Ag_{U_s} \varphi_N(X_s)\,\D{s} +
\frac{1}{t}\,\int_0^t \langle \grad\varphi_N(X_s),\upsigma(X_s)\, \D W_s\rangle   \\
&\mspace{30mu} +
\frac{1}{t}\,\int_{0}^{t}\int_{R^m_*}
\Bigl(\varphi_N\bigl(X_{s-} + g(\xi)\bigr)- \varphi_N(X_{s-})\Bigr)
\widetilde\cN(\D s, \D \xi)\,.
\end{aligned}
\end{equation}
Then, the second and third terms on the right hand side of \cref{PT6.1A}
converge to $0$ a.s.\ as $t\to\infty$,
by the proof of Lemma~4.2 in \cite{ACPZ19}.

An easy computation shows that
\begin{equation}\label{PT6.1B}
\Ag_{z} \varphi_N(x) \,\le\, \begin{cases}
\frac{1-\phi(\Vo(x))}{N+\phi(\Vo(x))}
+ F_{1,N}(x)+F_{2,N}(x)\qquad\text{on~}\cK^c\,,\\[5pt]
\frac{1 + \rc(x,z)}{N+\phi(\Vo(x))}
+ F_{1,N}(x)+F_{2,N}(x)\qquad\text{on~}\cK\,,
\end{cases}
\end{equation}
where
\begin{equation}\label{PT6.1C}
F_{1,N}(x)\,\df\,-\phi'\bigl(\Vo(x)\bigr)\,
\frac{\babs{\upsigma\transp(x)\nabla\Vo(x)}^2}{\bigl(N+\phi(\Vo(x))\bigr)^2}\,,
\qquad x\in\Rd\,,
\end{equation}
and
\begin{equation}\label{PT6.1D}
F_{2,N}(x) \,\df\, 
\frac{1}{N+\phi\bigl(\Vo(x)\bigr)}
\int_{\Rd} \int_{\Vo(x)}^{\Vo(x+\xi)}\frac{\phi\bigl(\Vo(x)\bigr)-\phi(y)}{N+\phi(y)}\,
\D y\,\nu(\D\xi)\,.
\end{equation}
To estimate \cref{PT6.1D}, we use
\begin{equation*}
\babss{\int_{\Rd} \int_{\Vo(x)}^{\Vo(x+\xi)}
\frac{\phi\bigl(\Vo(x)\bigr)-\phi(y)}{N+\phi(y)}\,\D y\,\nu(\D\xi)}
\,\le\, \nu(\Rd)\,\norm{\phi'}_{\Lp^\infty(\RR)}\,
\bnormm{\frac{\grad\Vo}{N+ \varphi\comp\Vo}}_{\Lp^\infty(\Rd)}\,.
\end{equation*}
Then combining this with \cref{PT6.1A,PT6.1B,PT6.1C,PT6.1D},
we obtain
\begin{equation*}
\limsup_{n\to\infty}\,
\int_{\cK^c\times\Act} \frac{\phi\bigl(\Vo(x)\bigr)}
{N+\phi\bigl(\Vo(x)\bigr)}\,\zeta^U_{t_n}(\D{x},\D{z})
\,\le\, \frac{C}{N}
\end{equation*}
for some constant $C$,
from which it follows that $\zeta^U_{t_n}$ is a.s.\ tight
when restricted to $\Bor(\cK^c\times\Act)$.
Therefore, it is a.s.\ tight in  $\cP(\Rd\times\Act)$.
This completes the proof.
\end{proof}

%%%%%%%%%%%%%%%%%%%%%%%%%%%%%%%%%%%%%%%%%%%%%%%%%%%%%%%%%%%%%%%%%%%%%%%%%%%%%%%
\section{An approximate HJB equation}\label{S7}

In this section, we use an approximate HJB equation to 
construct $\epsilon$-optimal controls.
Its purpose is twofold.
First, it is used to establish asymptotic optimality in \cite{APZ19b}.
Second, the approximating HJB equation is a semilinear equation on
a sufficiently large ball, and a linear equation on its complement, which
is beneficial to numerical methods.
This result was first reported in \cite[Section~4]{ABP15} for a continuous diffusion.
The proof in that paper crucially relied on the the following property of a
positive recurrent nondegenerate diffusion of the form
$\D X_t = b(X_t)\D t + \upsigma(X_t)\D W_t$:
If a function $f\colon\Rd\to\RR$ is integrable under the invariant probability
measure $\mu$ of the diffusion, then $\Exp[f(X_t)]\to \mu(f)$ as $t\to\infty$.
The proof of this property in \cite[Proposition~2.6]{Ichihara-13} relies on
the Harnack property which we do not have for the model in this paper.
Thus, a different approach is adopted.

We first consider the ergodic control problem with a suitable control
which satisfies \cref{A2.2} fixed outside a ball of arbitrarily large radius.
Then, we show that $\epsilon$-optimal controls are obtained by choosing the radius of the
ball sufficiently large. 
 \Cref{A7.1new} replaces \cref{A2.2} in this section.

%%%%%%%%%%%%%%%%%%%%%%%%%%%%%%%%%%%%%%%%%%%%%%%%%%%%%%%%%%%%%%%%%%%%%%%%%%%%%%%%
\begin{assumption}\label{A7.1new}
The following hold:
\begin{enumerate}
\item[\ttup{i}]
The function
$\widetilde{F}$ in \cref{E-tildF} is in
$\Cc^2(\Rd)$, has at most polynomial growth of degree $\widetilde{m} \ge 1$,
and satisfies
\begin{equation*}
\abs{x}\abs{\grad \widetilde{F}}
+ \abs{x}^2\norm{\grad^2 \widetilde{F}}\,\in\,\order(\widetilde{F})\,.
\end{equation*}
\item[\ttup{ii}]
There exist $\Tilde{v}\in\Ussm$ and $\widetilde\sV\in\Cc^2(\Rd)$, with
$\widetilde\sV\in\order(\widetilde{F})$, satisfying
\begin{equation*}
\Ag_{\Tilde{v}} \widetilde\sV \,\le\, \widetilde{C} - \widetilde{F}
\end{equation*}
for some positive constant $\widetilde{C}$;
\item[\ttup{iii}]
The L\'evy measure $\nu$ satisfies
$\int_{\Rd}\abs{z}^{\widetilde{m}}\,\nu(\D{z}) < \infty$.
\end{enumerate}
\end{assumption}

The jump diffusion with a `truncated' control space is defined as following.
%%%%%%%%%%%%%%%%%%%%%%%%%%%%%%%%%%%%%%%%%%%%%%%%%%%%%%%%%%%%%%%%%%%%%%%%%%%%%%%%
\begin{definition}%\label{D7.1}
With $\Tilde{v}\in\Ussm$ as in \cref{A7.1new} and each $R>0$, we define
\begin{align*}
b^{R}(x,u) &\,\df\, 
\begin{cases}b(x,u)\,, \qquad &\text{if\ } (x,u)\in B_R\times\Act\,, \\
b\bigl(x,\Tilde{v}(x)\bigr)\,, \qquad &\text{if\ } x\in B^c_R\,,
\end{cases}  \\
\rc^{R}(x,u) &\,\df\, 
\begin{cases}\rc(x,u)\,, \qquad &\text{if\ } (x,u)\in B_R\times\Act\,, \\
\rc\bigl(x,\Tilde{v}(x)\bigr)\,, \qquad &\text{if\ } x\in B^c_R\,.
\end{cases} 
\end{align*}
Let $\Ag^R_u$ denote the operator associated with the controlled jump diffusion
\begin{equation*}
\D{X_t} \,\df\,
b^R(X_t,U_t)\,\D{t} + \upsigma(X_t)\,\D{W_t} + \D{L_t}\,,
\end{equation*} 
with $X_0 = x\in\RR^d$,
and define
\begin{equation*}
\rc^{\epsilon,R}\,\df\, \rc^R + \epsilon\widetilde{F}\,,
\quad\text{and}\quad
\varrho^{\epsilon,R}_{*} \,=\, \inf_{v\in\Usm(\Tilde{v},R)} \uppi_v(\rc^{\epsilon})\,,
\end{equation*}
where $\Usm(\Tilde{v},R)$ denotes the class of stationary Markov
controls which agree with $\Tilde{v}\in\Ussm$ on $B_R^c$.
\end{definition}

By \cref{T5.2}, for each $R>0$ and $\epsilon\in(0,1)$, 
there exists a unique $V^R_\epsilon\in\Sobl^{2,p}(\RR^d)$, for any
$p>1$, which is bounded from below in $\RR^d$ and satisfies $V^R_\epsilon(0)=0$, and
\begin{equation*} 
\min_{u\in\Act}\,\bigl[\Ag^R_u V^R_\epsilon(x) + \rc^{\epsilon,R}(x,u)\bigr] 
\,=\, \varrho^{\epsilon,R}_{*}
\quad\text{a.e.\ in\ }\Rd\,.
\end{equation*}
In addition, there exists a constant
$C$ such that
\begin{equation}\label{ES7.1B}
V^R_\epsilon \,\le\, C(1+2\widetilde{\sV})\qquad\forall\,R>0\,.
\end{equation}
It is clear that $\varrho^{\epsilon,R}_{*}$ is nonincreasing.
Let $\Hat{\varrho}^\epsilon \df \lim_{R\to\infty} \varrho^{\epsilon,R}_{*}$.
As in  the proof of \cref{T5.2},
$V^R_\epsilon \rightarrow \widehat{V}_\epsilon\in\Sobl^{2,p}(\RR^d)$, for any
$p>1$, which satisfies
\begin{equation}\label{ES7.1C}
\min_{u\in\Act}\,\bigl[\Ag_u \widehat{V}_\epsilon(x) + \rc^\epsilon(x,u)\bigr] \,=\,
\Hat{\varrho}^\epsilon 
\quad\text{a.e.\ in\ }\Rd\,.
\end{equation}

Recall that $\rc^\epsilon$, defined in the proof of \cref{T4.1},
is the optimal ergodic value for the controlled diffusion in \cref{E-sde}
with running cost $\rc^\epsilon$.
We wish to show that $\Hat{\varrho}^\epsilon=\varrho^\epsilon_*$.
To establish this we need the following lemma,
which provides a lower bound for supersolutions of a general class
of integro-differential equations.

%%%%%%%%%%%%%%%%%%%%%%%%%%%%%%%%%%%%%%%%%%%%%%%%%%%%%%%%%%%%%%%%%%%%%%%%%%%%%%%%
\begin{lemma}\label{L7.1}
Let $\varphi \in \Sobl^{2,d}(\RR^d)$, be a supersolution of the equation
\begin{equation*}
a^{ij}(x)\partial_{ij}\varphi(x) + b^{i}(x)\partial_i\varphi(x)
+ {\cI}\varphi(x) + \widetilde{F}(x) \,=\, 0 \quad \text{on } \RR^d \,, 
\end{equation*}
which is bounded below in $\Rd$.
Assume the following:
\begin{enumerate}
\item[\ttup{i}] the matrix $a$ is nonsingular and satisfies
\begin{equation*}
\limsup_{\abs{x}\rightarrow\infty} \frac{\norm{a(x)}}{\abs{x}^2} \,<\,\infty\,;
\end{equation*}

\item[\ttup{ii}] 
the drift $b$ is measurable and has at most linear growth;

\item[\ttup{iii}] the function $\widetilde{F}$ is as in
\cref{A7.1new};

\item[\ttup{iv}] 
the operator $\cI\colon \Cc^1(\RR^d)\mapsto\Cc(\RR^d)$ is given by
\begin{equation*}
\cI h(x) \,=\, \int_{\RR^d_*} 
h(x+y) - h(x) - \langle y,\grad{h(x)} \rangle\,\nu(\D{y})
\end{equation*}
 for $h\in\Cc^1(\RR^d)$
and the L\'evy measure $\nu$ satisfies 
\begin{equation*}
\int_{\RR^d_*}\abs{z}^{\widetilde{m}}\,\nu(\D{z}) 
+ \int_{B_1\setminus\{0\}}\abs{z}^{2}\,\nu(\D{z})  < \infty\,.
\end{equation*}
\end{enumerate}
Then, $\widetilde{F}\in\order(\varphi)$.
\end{lemma}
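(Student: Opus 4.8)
The plan is to establish the pointwise lower bound $\varphi(x)\ge c\,\widetilde{F}(x)-C$, from which $\widetilde{F}\in\order(\varphi)$ follows immediately (after adding a constant to $\varphi$ so that $\varphi\ge0$; both the equation and the conclusion are insensitive to this, since the operator $\Lg\df a^{ij}\partial_{ij}+b^{i}\partial_i+\cI$ annihilates constants and $\varphi$ is bounded below). The argument rests on the It\^{o}--Krylov formula for the jump diffusion $\process{X}$ generated by $\Lg$, together with a lower bound for the mean exit time from the balls $D_x\df B_{\abs{x}/2}(x)$ which is \emph{uniform in}~$x$. As a preliminary step, note that \cref{A7.1new}\,(i) makes $\widetilde{F}$ almost constant on each $D_x$: from $\abs{x}\,\abs{\grad\widetilde{F}(x)}\le C_1(1+\widetilde{F}(x))$ we get $\babs{\grad\log(1+\widetilde{F})(x)}\le C_1/\abs{x}$ for $\abs{x}\ge1$, and integrating along segments inside $D_x$ (all of whose points have modulus at least $\abs{x}/2$ and which have length at most $\abs{x}$) yields constants $c_0\in(0,1]$ and $R_0>0$ with $\inf_{D_x}\widetilde{F}\ge c_0\widetilde{F}(x)-1$ for $\abs{x}\ge R_0$.

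A weak solution of the martingale problem for $\Lg$ exists, since the diffusion matrix is locally nondegenerate with at most quadratic growth, the drift is measurable with at most linear growth (which is covered by Krylov's theory of It\^{o} processes), and the jump part is a compensated integral with $\int_{\Rd}\abs{z}^{\widetilde{m}}\,\nu(\D z)<\infty$. Applying the It\^{o}--Krylov formula to the supersolution $\varphi\in\Sobl^{2,d}(\RR^d)$ on $D_x$, using $\Lg\varphi\le-\widetilde{F}$ a.e.\ (the occupation measure of $X$ is absolutely continuous, so the a.e.\ inequality suffices), $\varphi\ge0$, $\widetilde{F}\ge0$, and letting the time horizon tend to infinity, we obtain
\begin{equation*}
\varphi(x)\,\ge\,\Exp_x\biggl[\int_0^{\uptau(D_x)}\widetilde{F}(X_s)\,\D s\biggr]
\,\ge\,\Bigl(\inf_{D_x}\widetilde{F}\Bigr)\,\Exp_x\bigl[\uptau(D_x)\bigr]\,.
\end{equation*}

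The crux is the uniform bound $\Exp_x[\uptau(D_x)]\ge c>0$ for $\abs{x}$ large. With $\rho\df\abs{x}/2$, put $h(y)\df\rho^{-2}\bigl((\rho^2-\abs{y-x}^2)^{+}\bigr)^2$; then $h\in\Cc^{1,1}(\RR^d)$ with $\norm{\grad^2 h}_{\Lp^\infty}$ and $\rho^{-1}\norm{\grad h}_{\Lp^\infty}$ bounded independently of $\rho$, $h\equiv0$ on $D_x^c$, and $h(x)=\rho^2$. On $D_x$ one has $\abs{y}\le3\rho$, so hypotheses (i)--(ii) give $a^{ij}\partial_{ij}h+b^{i}\partial_i h\ge-C\rho^2$; and splitting the integral defining $\cI h(y)$ at $\abs{z}=1$ — using the $\Cc^{1,1}$ Taylor bound $\babs{h(y+z)-h(y)-\langle z,\grad h(y)\rangle}\le C\abs{z}^2$ with $\int_{B_1\setminus\{0\}}\abs{z}^2\,\nu(\D z)<\infty$ for the small jumps, and $0\le h\le\rho^2$, $\abs{\grad h}\le C\rho$, $\int_{\abs{z}>1}\abs{z}^{\widetilde{m}}\,\nu(\D z)<\infty$ (recall $\widetilde{m}\ge1$) for the large ones — gives $\cI h(y)\ge-C\rho^2$. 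Hence $\Lg h\ge-C\rho^2$ a.e.\ on $\RR^d$, and since $h(X_{\uptau(D_x)})=0$ with $\Exp_x[h(X_{t\wedge\uptau(D_x)})]\to0$, Dynkin's formula gives (the case $\Exp_x[\uptau(D_x)]=\infty$ being trivial)
\begin{equation*}
\rho^2\,=\,h(x)\,=\,-\Exp_x\biggl[\int_0^{\uptau(D_x)}\Lg h(X_s)\,\D s\biggr]\,\le\,C\rho^2\,\Exp_x\bigl[\uptau(D_x)\bigr]\,,
\end{equation*}
so $\Exp_x[\uptau(D_x)]\ge1/C$ for all $\abs{x}\ge R_0$.

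Combining the two displays, $\varphi(x)\ge C^{-1}\bigl(c_0\widetilde{F}(x)-1\bigr)$ for $\abs{x}\ge R_0$, while $\varphi$ and $\widetilde{F}$ are bounded on $\Bar{B}_{R_0}$; hence $\widetilde{F}(x)\le C'(1+\varphi(x))$ on $\RR^d$, i.e.\ $\widetilde{F}\in\order(\varphi)$. The main obstacle is the uniform exit-time estimate of the preceding paragraph: one must rule out that the possibly large jumps of $X$ eject it from $B_{\abs{x}/2}(x)$ in a time shrinking with $\abs{x}$, and this is precisely where the moment bound on $\nu$ with exponent $\widetilde{m}\ge1$ and the quadratic, respectively linear, growth of $a$ and $b$ enter, through the $\rho$-dependent scaling of the test function $h$ on the ball of radius $\abs{x}/2$. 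A secondary technical point is the validity of the It\^{o}--Krylov formula for $\Lg$ with a merely measurable drift, handled via Krylov's a priori estimates.
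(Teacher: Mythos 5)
Your proof is correct in its essentials, but it takes a genuinely different, probabilistic route from the paper's analytic one. The paper first shows $\sA\widetilde{F}\in\order(\widetilde{F})$ --- using \cref{A7.1new}\,(i) for the local part $a^{ij}\partial_{ij}\widetilde{F}+b^{i}\partial_i\widetilde{F}$ and \cite{APS19}*{Lemma~5.1} for $\cI\widetilde{F}$ --- then builds the comparison function $\Hat\varphi^n_\epsilon=\varphi-\epsilon\widetilde{F}\phi_n-(M-\sup_{B_r}\widetilde F)$ with a smooth cutoff $\phi_n$, shows $\sA\Hat\varphi^n_\epsilon\le0$ on $B_{n+1}\setminus B_r$ and $\Hat\varphi^n_\epsilon\ge0$ on $B_r\cup B_{n+1}^c$, and invokes the (nonlocal) strong maximum principle before sending $n\to\infty$. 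You instead interpret $\Lg=a^{ij}\partial_{ij}+b^{i}\partial_i+\cI$ as the generator of a jump diffusion, apply It\^o--Krylov to the supersolution on $D_x=B_{\abs{x}/2}(x)$, and combine two estimates: a uniform-in-$x$ lower bound for $\Exp_x[\uptau(D_x)]$ coming from Dynkin's formula applied to the scaled barrier $h(y)=\rho^{-2}\bigl((\rho^2-\abs{y-x}^2)^+\bigr)^2$ (with the split of $\cI h$ at $\abs{z}=1$ using the two moment conditions in \ttup{iv}), and the near-constancy $\inf_{D_x}\widetilde F\ge c_0\widetilde F(x)-1$ obtained by integrating the logarithmic-gradient bound furnished by \cref{A7.1new}\,(i).

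The trade-off is as follows. Your route gives an explicit constant in the bound $\widetilde F\le C(1+\varphi)$, is localized to a single self-similar family of balls, and entirely avoids the cutoff's interaction with the nonlocal term: your test function $h$ has compact support by construction, whereas the paper's step from $\sA\Hat\varphi^n_\epsilon$ to $-\widetilde F-\epsilon\,\sA\widetilde F\cdot\phi_n$ silently passes between $\sA(\widetilde F\phi_n)$ and $(\sA\widetilde F)\phi_n$, and the difference is a genuinely nonlocal ``commutator'' that has to be controlled on the annulus. The price of your approach is that you must invoke weak existence of the martingale problem for $\Lg$ with merely measurable drift, diffusion matrix of at most quadratic growth, and a possibly infinite L\'evy measure (hypothesis \ttup{iv} permits a non-integrable singularity at the origin), plus the It\^o--Krylov formula in that generality; these are heavier tools than a maximum-principle comparison, though the paper already relies on a version of the It\^o--Krylov formula via \cite{ACPZ19}*{Lemma~4.1}. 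Two small points worth tightening in a final write-up: to pass to the limit $t\to\infty$ in Dynkin's formula for $h$ when $\Exp_x[\uptau(D_x)]$ is not known \emph{a priori} to be finite you should argue by contradiction from the assumption $\Exp_x[\uptau(D_x)]<1/C$ (as you indeed indicate); and the It\^o--Krylov formula applied to $\varphi\in\Sobl^{2,d}$ needs $\cI\varphi\in\Lpl^{d}$, which follows from the supersolution inequality (upper bound) together with $\varphi$ being bounded below (lower bound), but this merits a sentence.
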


\begin{proof}
We have
\begin{equation}\label{PL7.1A}
\sA \varphi(x)
\,\df\, a^{ij}(x)\partial_{ij}\varphi(x) + b^{i}(x)\partial_i\varphi(x)
+ {\cI}\varphi(x) \,\le\, -\widetilde{F}(x) \qquad \forall\,x\in \RR^d\,.
\end{equation}
By using \ttup{i}--\ttup{iii}, 
it is clear that $\sA \widetilde{F} - {\cI}\widetilde{F}\in \order(\widetilde{F})$.
By \ttup{iii} and \ttup{iv}, 
it follows by \cite{APS19}*{Lemma 5.1} that ${\cI}\widetilde{F} \in \order(\widetilde{F})$.
Thus, there exists $\Hat{r} > 0$ such that 
\begin{equation}\label{PL7.1B}
\abs{\sA \widetilde{F}(x)} \,\le\, C(1 + \widetilde{F}(x)) \qquad \forall\, x\in B^c_{\Hat{r}} \,,  
\end{equation}
for some positive constant $C$.
Let $\phi_n(x)$ be a smooth cutoff function satisfying $\phi_n(x) = 1$ on $B_n$ 
and $\phi_n(x) = 0$ on $B^c_{n+1}$, for $n\in\NN$.
By \cref{PL7.1B} and \ttup{iii},
we can choose $r > \Hat{r}$ large enough and $\epsilon\in(0,1)$ sufficiently small so that
for any $n\in\NN$,
\begin{equation}\label{PL7.1C}
-\widetilde{F}(x) - \epsilon\,\sA \widetilde{F}(x)\phi_n(x) 
\,\le\, 0 \qquad \forall\,x\in B^{c}_r\,.
\end{equation}
Let $M$ be a lower bound for $\varphi$, and $n > r$. 
We define the function 
$\Hat{\varphi}^n_\epsilon(x) \df \varphi(x) - \epsilon \widetilde{F}(x) \phi_n(x) 
- (M - \sup_{B_r} \widetilde{F})$.
Then, applying \cref{PL7.1A,PL7.1C}, we have 
\begin{equation*}
\sA \Hat{\varphi}^n_\epsilon(x) 
\,\le\, -\widetilde{F}(x) - \epsilon\,\sA \widetilde{F}(x)\phi_n(x) \,\le\, 0
\qquad \forall\,x\in B_{n+1}\setminus B_r\,.
\end{equation*}
It is evident that $\varphi(x) - (M - \sup_{B_r} \widetilde{F}) \ge 0$ on $B^c_{n+1}$, and 
$\varphi(x) - \epsilon \widetilde{F}(x) - (M - \sup_{B_r} \widetilde{F}) \ge 0$ on $B_r$.
Thus, applying the strong maximum principle, 
we obtain $\Hat{\varphi}^n_\epsilon(x) \ge 0$ in $\RR^d$.
It follows that $\varphi(x) \ge \epsilon \widetilde{F}(x)\phi_n(x) 
+ (M - \sup_{B_r} \widetilde{F})$ in $\RR^d$ 
for all $n$ large enough. This completes the proof.
\end{proof}

The main result of this section is the following.

%%%%%%%%%%%%%%%%%%%%%%%%%%%%%%%%%%%%%%%%%%%%%%%%%%%%%%%%%%%%%%%%%%%%%%%%%%%%%%%%%%
\begin{theorem}\label{T7main}
Grant \cref{A7.1new}. Then, $\Hat{\varrho}^\epsilon=\varrho^\epsilon_*$.
\end{theorem}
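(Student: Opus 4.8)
The plan is to prove the two inequalities $\Hat\varrho^\epsilon\ge\varrho^\epsilon_*$ and $\Hat\varrho^\epsilon\le\varrho^\epsilon_*$ separately. The first is immediate: every $v\in\Usm(\Tilde v,R)$ agrees with $\Tilde v\in\Ussm$ on $B_R^c$, and since $\upsigma$ is nondegenerate, modifying a stable control on the compact set $B_R$ keeps it in $\Ussm$ (the function $\widetilde\sV$ of \cref{A7.1new}\,(ii) serves as a Lyapunov function, since $\Ag_v\widetilde\sV=\Ag_{\Tilde v}\widetilde\sV\le\widetilde{C}-\widetilde{F}$ on $B_R^c$, $\widetilde{F}$ is coercive, and $B_R$ is compact). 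Hence $\uppi_v\in\eom$ for each such $v$, so $\varrho^{\epsilon,R}_*\ge\inf_{\uppi\in\eom}\uppi(\rc^\epsilon)=\varrho^\epsilon_*$, the last equality being \cref{T4.1} applied to the coercive running cost $\rc^\epsilon$; letting $R\to\infty$ gives $\Hat\varrho^\epsilon\ge\varrho^\epsilon_*$.

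For the reverse inequality I would analyse the limit $\widehat{V}_\epsilon$ of \cref{ES7.1C}, which solves $\min_{u\in\Act}[\Ag_u\widehat{V}_\epsilon+\rc^\epsilon(\cdot,u)]=\Hat\varrho^\epsilon$ with $\widehat{V}_\epsilon(0)=0$ and, by \cref{ES7.1B}, $\widehat{V}_\epsilon\le C(1+2\widetilde\sV)$. The key preliminary is a two-sided growth bound for $\widehat{V}_\epsilon$, uniform in the truncation radius $R$. For the lower bound, note that each $\rc^{\epsilon,R}\ge\epsilon\widetilde{F}$ is coercive with a fixed coercivity rate and $\varrho^{\epsilon,R}_*$ is uniformly bounded, so the minimum of $V^R_\epsilon$ is attained in a fixed ball $B_{\rho}$; combining this with a \cref{L5.2}-type oscillation estimate (checked to be uniform in $R$) and $V^R_\epsilon(0)=0$ yields $V^R_\epsilon\ge-\kappa_{\rho}$ on $\RR^d$ for all $R$, whence $\widehat{V}_\epsilon\ge-\kappa_{\rho}$. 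In particular, by \cref{ES7.1B}, \cref{EA2.1A,EA2.2A} and \cref{A7.1new}\,(iii), $\widetilde\cI\widehat{V}_\epsilon\in\Lpl^\infty(\RR^d)$, so the It\^o formula \cref{E-Ito} applies to $\widehat{V}_\epsilon$. Taking an a.e.\ measurable selector $\widehat{v}$ from the minimizer and using $\rc^\epsilon\ge\epsilon\widetilde{F}$, one checks that $\widehat{V}_\epsilon$ is a bounded-below supersolution of $\sA_{\widehat{v}}\varphi+\widetilde\cI\varphi+\tfrac{\epsilon}{2}\widetilde{F}=0$ outside a large ball (the constant $\Hat\varrho^\epsilon$ absorbed into the coercive $\widetilde{F}$); \cref{L7.1} then gives $\widetilde{F}\in\order(\widehat{V}_\epsilon)$, so $\widehat{V}_\epsilon$ is coercive, and since $\Ag_{\widehat{v}}\widehat{V}_\epsilon\le\Hat\varrho^\epsilon-\epsilon\widetilde{F}$ is a Foster--Lyapunov inequality, $\widehat{v}\in\Ussm$.

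With these properties the conclusion follows by repeating the stochastic-representation argument in the proof of \cref{T5.2}\,(c) for the running cost $\rc^\epsilon$ (which, being coercive, is a fortiori near-monotone, so that theorem and its proof carry over). Since $\widehat{V}_\epsilon$ solves the ergodic HJB with value $\Hat\varrho^\epsilon$, satisfies $\widehat{V}^-_\epsilon\in\sorder(\Vo)$ and $\widehat{V}_\epsilon(0)=0$, the computation behind \cref{PT5.2D,PT5.2E,PT5.2F,PT5.2G,PT5.2H} (with $\varrho_*$ replaced by $\Hat\varrho^\epsilon$ and $\rc$ by $\rc^\epsilon$) gives
\[
\widehat{V}_\epsilon(x)\,=\,\lim_{r\searrow0}\,\inf_{v\in\Ussmb}\,
\Exp_x^{v}\biggl[\int_0^{\uuptau_r}\bigl(\rc^{\epsilon}_{v}(X_s)-\Hat\varrho^\epsilon\bigr)\,\D s\biggr]\,,
\qquad x\in\RR^d\,.
\]
If $\Hat\varrho^\epsilon>\varrho^\epsilon_*$, then since $\inf_{v\in\Ussm}\uppi_v(\rc^\epsilon)=\varrho^\epsilon_*$ we may choose $v\in\Ussmb$ with $\varrho^{\epsilon}_v<\Hat\varrho^\epsilon$; expressing the inner expectation through the relative value function of $v$ and using $\Exp_x^{v}[\uuptau_r]\to\infty$ as $r\searrow0$, the right-hand side would tend to $-\infty$, contradicting finiteness of $\widehat{V}_\epsilon$. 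Hence $\Hat\varrho^\epsilon\le\varrho^\epsilon_*$, and together with the first part, $\Hat\varrho^\epsilon=\varrho^\epsilon_*$.

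The main obstacle is the uniformity in $R$ required in the second paragraph: one must show that the lower-bound and oscillation estimates for the \emph{truncated} HJB equations hold with constants independent of the truncation radius, so that the growth control on $\widehat{V}_\epsilon$ — and hence its coercivity via \cref{L7.1} — survives the limit $R\to\infty$. This is precisely the place where the nonlocal compound-Poisson term has to be controlled, via \cref{L7.1} and the polynomial-growth hypotheses of \cref{A7.1new}, in lieu of the Harnack/ergodic-semigroup property available for continuous diffusions in \cite{ABP15}.
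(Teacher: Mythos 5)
Your easy direction ($\Hat\varrho^\epsilon\ge\varrho^\epsilon_*$) is correct and in fact cleaner than the paper's, which handles it by a terse appeal to the non-existence of bounded-below solutions of \cref{ES7.1C} when $\Hat{\varrho}^\epsilon<\varrho^\epsilon_*$; the observation that $\Usm(\Tilde v,R)\subset\Ussm$, so that $\varrho^{\epsilon,R}_*\ge\inf_{\uppi\in\eom}\uppi(\rc^\epsilon)=\varrho^\epsilon_*$, is more direct.

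For the hard direction, however, you have taken a genuinely different route from the paper, and it has a gap that the paper is specifically designed to sidestep. Your plan requires \emph{intrinsic} lower-bound and growth control on $\widehat{V}_\epsilon$ — you need $\widehat{V}_\epsilon$ bounded below (via uniform-in-$R$ oscillation estimates) before \cref{L7.1} is even applicable to it, and then a stochastic representation à la \cref{T5.2}\,(c) with a relative-value-function argument for the contradiction. The paper avoids all of this. It applies \cref{L7.1} instead to $V_\epsilon$, the solution of the \emph{untruncated} HJB \cref{PT7mainA} at value $\varrho^\epsilon_*$, which is \emph{already known} to be bounded below, concluding $\widetilde{F}\in\order(V_\epsilon)$ and hence $\rc^\epsilon_{v^\epsilon_*}\in\order(V_\epsilon)$. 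Combined with $\widehat{V}_\epsilon\le C(1+2\widetilde\sV)$ from \cref{ES7.1B} and $\widetilde\sV\in\order(\widetilde F)$ from \cref{A7.1new}\,(ii), this gives $\widehat{V}_\epsilon\in\order(V_\epsilon)$ — a two-sided \emph{comparison} of $\widehat{V}_\epsilon$ against the well-understood $V_\epsilon$ that never requires $\widehat{V}_\epsilon$ to be bounded below. With this, the paper applies It\^o's formula to \cref{PT7mainA} under the selector $v^\epsilon_*$, uses Birkhoff to see that $\frac1T\Exp^{v^\epsilon_*}_x[V_\epsilon(X_{T\wedge\uptau_r})]\to0$, transfers this to $\widehat{V}_\epsilon$ via the growth comparison, and finally applies It\^o to \cref{ES7.1C} evaluated at $v^\epsilon_*$ to conclude $\varrho^\epsilon_*\ge\Hat\varrho^\epsilon$ directly from the long-time averages — no stochastic representation and no uniform-in-$R$ analysis of $V^R_\epsilon$ is needed.

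So the concrete gap in your argument is exactly the one you flag: establishing a lower bound on $\widehat{V}_\epsilon$ and oscillation control uniformly in $R$. You sketch a near-monotone argument (minimum of $V^R_\epsilon$ in a fixed ball plus an oscillation bound), but do not carry it out, and verifying it requires re-deriving \cref{L5.1,L5.2} for the $R$-truncated operators $\Ag^R_u$ with constants independent of $R$; while plausible, this is substantial extra work and must in particular re-handle the nonlocal term with uniform bounds. Your final contradiction also invokes the existence of a Poisson-type relative value function $h_v$ for an arbitrary $v\in\Ussmb$, which is not part of the paper's toolkit as stated. Neither issue is necessarily fatal, but both are nontrivial, and the paper's comparison-of-growth trick renders them unnecessary. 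If you want to salvage your approach, you should either carry out the uniform-in-$R$ estimates in detail, or — better — replace your steps~2--4 with the paper's comparison of $\widehat{V}_\epsilon$ against $V_\epsilon$ via \cref{L7.1} applied to $V_\epsilon$, which is where \cref{A7.1new}\,(ii) and \cref{ES7.1B} earn their keep.
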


\begin{proof}
Let $V_\epsilon\in\Sobl^{2,p}(\RR^d)$, $p>1$,
be the unique solution of the
equation
\begin{equation}\label{PT7mainA} 
\min_{u\in\Act}\,\bigl[\Ag_u V_\epsilon(x) + \rc^\epsilon(x,u)\bigr] \,=\,
\varrho^\epsilon_*
\quad\text{a.e.\ in\ }\Rd\,,
\end{equation}
which is bounded below in $\Rd$ and satisfies $V_\epsilon(0)=0$.
Applying It\^o's formula to \cref{PT7mainA}, we obtain
\begin{equation}\label{PT7mainB}
\Exp^{v^\epsilon_*}_x\bigl[V_\epsilon(X_{T\wedge\uptau_r})\bigr] \,=\,
V_{\epsilon}(x) - 
\Exp^{v^\epsilon_*}_x\biggl[\int_0^{T\wedge\uptau_r}\rc^\epsilon_{v^\epsilon_*}
\bigl(X_s\bigr) 
\,\D{s}\biggr]  + \varrho^\epsilon_*\Exp^{v^\epsilon_*}_x[T\wedge\uptau_r]\,,
\end{equation}
with $v^\epsilon_*$ a measurable selector from the minimizer
of \cref{PT7mainB}.
It is clear from \cref{PT7mainB} that
$G(T)\df\lim_{r\to\infty} \Exp^{v^\epsilon_*}_x\bigl[V_{\epsilon}(X_{T\wedge\uptau_r})\bigr]$
exists and satisfies $\limsup\frac{1}{T} G(T)\to0$ by Birkhoff's ergodic theorem.
By \cref{L7.1}, we have
$\rc^\epsilon_{v^\epsilon_*}\in\order(V_{\epsilon})$, and this
implies  that
$\widehat{V}_\epsilon\in\order(V_{\epsilon})$
by \cref{A7.1new}\,(ii) and \cref{ES7.1B}.
Therefore, if
$\widehat{G}(T) \df \limsup_{r\to\infty}\Exp^{v^\epsilon_*}_x\bigl[\widehat{V}_\epsilon(X_{T\wedge\uptau_r})\bigr]$,
then $\limsup_{T\to\infty}\frac{1}{T} \widehat{G}(T)\to0$.
Thus, evaluating \cref{ES7.1C} at $v^\epsilon_*$, and applying It\^o's formula,
we obtain
\begin{equation*}
\limsup_{T\to\infty}\,\frac{1}{T}\,
\Exp^{v^\epsilon_*}_x\biggl[\int_0^{T}\rc^\epsilon_{v^\epsilon_*}
\bigl(X_s\bigr) \,\D{s}\biggr] \,\ge\, \Hat{\varrho}^\epsilon\,,
\end{equation*}
from which it follows that $\varrho^\epsilon_*\ge\Hat{\varrho}^\epsilon$.
This of course implies that $\varrho^\epsilon_*=\Hat{\varrho}^\epsilon$,
since \cref{ES7.1C} has no bounded from
below solutions for $\Hat{\varrho}^\epsilon<\varrho^\epsilon_*$.
\end{proof}

The following corollary concerns the construction 
of continuous precise $\epsilon$-optimal controls.
It follows directly from \cref{T7main} and the method
in \cite[Theorem~5.5]{AP16a}.

%%%%%%%%%%%%%%%%%%%%%%%%%%%%%%%%%%%%%%%%%%%%%%%%%%%%%%%%%%%%%%%%%%%%%%%%%%%%%%%%
\begin{corollary}\label{C7.1}
For any given $\epsilon>0$ and $\Tilde{v}$ satisfying \cref{A2.2},
there exist
$R = R(\epsilon)>0$ and a continuous precise control 
$v_\epsilon \in \Ussm$ such that $v_\epsilon \equiv \tilde{v}$ on $\Bar{B}^c_R$,  
and
\begin{equation*}%\label{ET7.2A}
\int_{\RR^d\times\Act}\rc(x,u)\,\uppi_{v_{\epsilon}}(\D{x},\D{u})\,\le\,
\varrho_* + \epsilon\,.
\end{equation*}
\end{corollary}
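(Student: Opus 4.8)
The plan is to feed the quantitative statement $\Hat\varrho^{\epsilon}=\varrho^{\epsilon}_{*}$ of \cref{T7main} into the bound $\varrho^{\epsilon}_{*}\le\varrho_{*}+\epsilon\Tilde\kappa(1+2\varrho_{*})$ obtained in the proof of \cref{T4.1} (note $\varrho_{*}\le\widetilde C<\infty$, since $\Ag_{\Tilde v}\widetilde\sV\le\widetilde C-\widetilde F\le\widetilde C-\rc$ by \cref{A7.1new}\,(ii)), so as to bring the ergodic cost to within $\epsilon$ of $\varrho_{*}$ over the restricted class $\Usm(\Tilde v,R)$ for $R$ large, and then to upgrade the resulting near-optimal Markov control to a continuous precise one following the proof of \cite{AP16a}*{Theorem~5.5}. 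Throughout, $\Tilde v$ is taken to be continuous on $\Bar B_{R}^{c}$ (as it is in the applications, where it is a constant there), so that the conclusion makes sense.

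First, given $\epsilon>0$, I would choose $\epsilon_{0}\in(0,1)$ with $\epsilon_{0}\Tilde\kappa(1+2\varrho_{*})<\epsilon/3$, so that $\varrho^{\epsilon_{0}}_{*}\le\varrho_{*}+\epsilon/3$ by the proof of \cref{T4.1}. Since $R\mapsto\varrho^{\epsilon_{0},R}_{*}$ is nonincreasing and converges to $\Hat\varrho^{\epsilon_{0}}=\varrho^{\epsilon_{0}}_{*}$ by \cref{T7main}, I would then fix $R=R(\epsilon)$ with $B_{R}\supset\sB_\circ$ and $\varrho^{\epsilon_{0},R}_{*}\le\varrho^{\epsilon_{0}}_{*}+\epsilon/3\le\varrho_{*}+2\epsilon/3$. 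Here one should record that $\Usm(\Tilde v,R)\subset\Ussm$, and more: there is a constant $C_{R}$ independent of $v$ with $\Ag_{v}\widetilde\sV\le C_{R}\Ind_{B_{R}}-\widetilde F$ for every $v\in\Usm(\Tilde v,R)$ --- outside $B_{R}$ the coefficients coincide with those of the positive recurrent $\Tilde v$-dynamics, so \cref{A7.1new}\,(ii) applies, while on $B_{R}$ one uses $\widetilde\sV\in\Cc^{2}(\Rd)$ and $\widetilde\cI\widetilde\sV\in\Lpl^{\infty}(\Rd)$ (valid since $\widetilde\sV\in\order(\widetilde F)$ and $\int_{\Rd}\abs{z}^{\widetilde m}\,\nu(\D z)<\infty$). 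In particular $\sup_{v\in\Usm(\Tilde v,R)}\uppi_{v}(\widetilde F)<\infty$, and $\uppi_{v}$ is unambiguous for such $v$ since $b$ and $b^{R}$ coincide along it.

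It would then remain to exhibit a continuous precise $v_{\epsilon}\in\Usm(\Tilde v,R)$ with $\uppi_{v_{\epsilon}}(\rc^{\epsilon_{0}})<\varrho^{\epsilon_{0},R}_{*}+\epsilon/3$, for then $\uppi_{v_{\epsilon}}(\rc)\le\uppi_{v_{\epsilon}}(\rc^{\epsilon_{0}})\le\varrho_{*}+\epsilon$, and $v_{\epsilon}\in\Ussm$. This is where I would invoke the method of \cite{AP16a}*{Theorem~5.5}. On one hand, continuous precise controls are dense in $\Usm(\Tilde v,R)$ for the topology of Markov controls: an arbitrary element is first approximated by precise controls in the standard way, and a precise control is then approximated, via Lusin's theorem, by extending (through the Dugundji extension theorem, into the compact metrizable space $\Act$) its restriction to a large closed subset of $B_{R}$ on which it is continuous, keeping the value $\Tilde v$ on $\Bar B_{R}^{c}$, with a harmless modification near $\partial B_{R}$ to make the patched map continuous. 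On the other hand, $v\mapsto\uppi_{v}(\rc^{\epsilon_{0}})$ is continuous on $\Usm(\Tilde v,R)$: the invariant measures $\mu_{v}$ of the truncated jump diffusion are absolutely continuous with respect to Lebesgue measure with densities bounded, uniformly in $v$, on each compact set --- this follows from \cite{ABG12}*{Theorem~A.3.5} for the associated nondegenerate diffusion and transfers to the jump diffusion via \cite{Li03}*{Lemma~2.1} since $\nu$ is finite, exactly as in the proof of \cref{T5.1} --- so $v_{n}\to v$ in the Markov control topology yields $\uppi_{v_{n}}\to\uppi_{v}$ in $\cP(\Rd\times\Act)$ (no mass escapes to infinity, by the uniform bound on $\uppi_{v}(\widetilde F)$ and the coerciveness of $\widetilde F$), and the uniform integrability of $\rc^{\epsilon_{0}}\le(1+\epsilon_{0})\widetilde F$ needed to pass to the limit comes from the uniform Foster--Lyapunov bound above --- on $\Bar B_{R}^{c}$ this is a property of the fixed $\Tilde v$-dynamics --- together with \cref{A7.1new}. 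Since a continuous functional has the same infimum over a dense subset as over the whole space, $\inf\bigl\{\uppi_{v}(\rc^{\epsilon_{0}})\colon v\in\Usm(\Tilde v,R)\text{ continuous precise}\bigr\}=\varrho^{\epsilon_{0},R}_{*}$, and any sufficiently good near-minimizer is the desired $v_{\epsilon}$.

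The main obstacle is the last paragraph --- the density of continuous precise controls in $\Usm(\Tilde v,R)$ together with the continuity of $v\mapsto\uppi_{v}(\rc^{\epsilon_{0}})$ on it --- which is the content of \cite{AP16a}*{Theorem~5.5} for pure diffusions. The new point to verify is only that the nonlocal term is benign, which it is: finiteness of $\nu$ is what lets the absolute continuity (with locally uniform density bounds) of the invariant measures transfer from the diffusion, and the control of $\widetilde\cI\widetilde F$ and $\widetilde\cI\widetilde\sV$ afforded by \cref{A7.1new} is precisely what keeps the uniform Foster--Lyapunov and uniform integrability estimates intact.
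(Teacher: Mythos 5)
Your proposal is correct and takes essentially the same route the paper indicates: invoke \cref{T7main} together with the quantitative bound $\varrho^{\epsilon}_{*}\le\varrho_{*}+\epsilon\Tilde\kappa(1+2\varrho_{*})$ from the proof of \cref{T4.1} to pick $\epsilon_{0}$ and then $R$ so that $\varrho^{\epsilon_{0},R}_{*}\le\varrho_{*}+2\epsilon/3$, and then carry over the density-of-continuous-precise-controls plus continuity-of-$v\mapsto\uppi_{v}(\rc^{\epsilon_{0}})$ argument of \cite[Theorem~5.5]{AP16a}, which is precisely what the paper's one-line proof refers to. You have supplied the details the paper omits, and your remarks on why the nonlocal term is benign (finiteness of $\nu$, the uniform Foster--Lyapunov bound on $\Usm(\Tilde v,R)$, and the control of $\widetilde\cI\widetilde\sV$ and $\widetilde\cI\widetilde F$ via \cref{A7.1new}) are exactly the points that need checking in transferring the diffusion argument.
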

%\begin{proof}
%We start with the running cost function 
%$\check{\rc}_\epsilon \df \rc^\epsilon + \frac{\epsilon}{2}f(u)$, where
%$f\colon \Act\mapsto [0,1]$ is some strictly convex continuous function. 
%Then, we consider the HJB equation
%\begin{equation*}
%\min_{u\in\Act}\,\bigl[\Ag^R_u \check{V}^{R}_\epsilon(x) 
%+ \check{\rc}^{R}_\epsilon(x,u)\bigr] 
%\,=\, \check{\varrho}_{R,\epsilon}
%\quad\text{a.e.\ in\ }\Rd\,,
%\end{equation*}
%where $\check{\rc}^{R}_\epsilon$ and 
%$\check{\varrho}_{R,\epsilon}$
%associated with the running cost function $\check{\rc}_\epsilon$
%are defined similarly to those in \cref{T7.1}.
%By using \cref{T7.1}, we choose $R$ large enough such that
%\begin{equation*}
%\check{\varrho}_{R,\epsilon} \,\le\, \varrho_{*,\epsilon} + \frac{\epsilon}{2}\,.
%\end{equation*}
%Therefore, similarly as in \cite[Theorem~5.5]{AP16a},
%we may construct a continuous precise control $v_\epsilon\in\Ussm$
%such that \cref{ET7.2A} holds. This completes the proof.
%\end{proof}

%%%%%%%%%%%%%%%%%%%%%%%%%%%%%%%%%%%%%%%%%%%%%%%%%%%%%%%%%%%%%%%%%%%%%%%%%%%%%%%%
\section*{Acknowledgments}
This research was supported in part by 
the Army Research Office through grant W911NF-17-1-001, and
in part by the National Science Foundation through grants DMS-1715210,
CMMI-1538149 and DMS-1715875,
and in part by Office of Naval Research through grant N00014-16-1-2956
and was approved for public release under DCN \#43-5439-19.

%%%%%%%%%%%%%%%%%%%%%%%%%%%%%%%%%%%%%%%%%%%%%%%%%%%%%%%%%%%%%%%%%%%%%%%%%%%%%%%%
%\bibliographystyle{plain}

\def\polhk#1{\setbox0=\hbox{#1}{\ooalign{\hidewidth
  \lower1.5ex\hbox{`}\hidewidth\crcr\unhbox0}}}
% \bib, bibdiv, biblist are defined by the amsrefs package.
\begin{bibdiv}
\begin{biblist}

\bib{BenLi-84}{book}{
      author={Bensoussan, A.},
      author={Lions, J.-L.},
       title={Impulse control and quasivariational inequalities},
      series={$\mu $},
   publisher={Gauthier-Villars, Montrouge; Heyden \& Son, Inc., Philadelphia,
  PA},
        date={1984},
        ISBN={2-04-015577-5},
        note={Translated from the French by J. M. Cole},
      review={\MR{756234}},
}

\bib{Menaldi-99}{incollection}{
      author={Menaldi, Jose-Luis},
      author={Robin, Maurice},
       title={On optimal ergodic control of diffusions with jumps},
        date={1999},
   booktitle={Stochastic analysis, control, optimization and applications},
      series={Systems Control Found. Appl.},
   publisher={Birkh\"{a}user Boston, Boston, MA},
       pages={439\ndash 456},
      review={\MR{1702974}},
}

\bib{ACPZ19}{article}{
      author={Arapostathis, Ari},
      author={Caffarelli, Luis},
      author={Pang, Guodong},
      author={Zheng, Yi},
       title={Ergodic control of a class of jump diffusions with finite
  {L}\'{e}vy measures and rough kernels},
        date={2019},
     journal={SIAM J. Control Optim.},
      volume={57},
      number={2},
       pages={1516\ndash 1540},
      review={\MR{3942851}},
}

\bib{ABP15}{article}{
      author={Arapostathis, Ari},
      author={Biswas, Anup},
      author={Pang, Guodong},
       title={Ergodic control of multi-class {$M/M/N+M$} queues in the
  {H}alfin-{W}hitt regime},
        date={2015},
        ISSN={1050-5164},
     journal={Ann. Appl. Probab.},
      volume={25},
      number={6},
       pages={3511\ndash 3570},
      review={\MR{3404643}},
}

\bib{AP16a}{article}{
      author={Arapostathis, Ari},
      author={Pang, Guodong},
       title={Ergodic diffusion control of multiclass multi-pool networks in
  the {H}alfin-{W}hitt regime},
        date={2016},
     journal={Ann. Appl. Probab.},
      volume={26},
      number={5},
       pages={3110\ndash 3153},
      review={\MR{3563203}},
}

\bib{APS19}{article}{
      author={Arapostathis, Ari},
      author={Pang, Guodong},
      author={Sandri\'{c}, Nikola},
       title={Ergodicity of a {L}\'{e}vy-driven {SDE} arising from multiclass
  many-server queues},
        date={2019},
     journal={Ann. Appl. Probab.},
      volume={29},
      number={2},
       pages={1070\ndash 1126},
      review={\MR{3910024}},
}

\bib{ABG12}{book}{
      author={Arapostathis, A.},
      author={Borkar, V.~S.},
      author={Ghosh, M.~K.},
       title={Ergodic control of diffusion processes},
      series={Encyclopedia of Mathematics and its Applications},
   publisher={Cambridge University Press},
     address={Cambridge},
        date={2012},
      volume={143},
      review={\MR{2884272}},
}

\bib{PMB00}{article}{
      author={Dai~Pra, P.},
      author={Di~Masi, G.~B.},
      author={Trivellato, B.},
       title={Pathwise optimality in stochastic control},
        date={2000},
        ISSN={0363-0129},
     journal={SIAM J. Control Optim.},
      volume={39},
      number={5},
       pages={1540\ndash 1557},
         url={https://doi.org/10.1137/S0363012998334778},
      review={\MR{1825592}},
}

\bib{PWM04}{article}{
      author={Dai~Pra, P.},
      author={Runggaldier, W.~J.},
      author={Tolotti, M.},
       title={Pathwise optimality for benchmark tracking},
        date={2004},
        ISSN={0018-9286},
     journal={IEEE Trans. Automat. Control},
      volume={49},
      number={3},
       pages={386\ndash 395},
         url={https://doi.org/10.1109/TAC.2004.824467},
      review={\MR{2062251}},
}

\bib{AA17}{article}{
      author={Arapostathis, Ari},
       title={Some new results on sample path optimality in ergodic control of
  diffusions},
        date={2017},
     journal={IEEE Trans. Automat. Control},
      volume={62},
      number={10},
       pages={5351\ndash 5356},
      review={\MR{3708911}},
}

\bib{Ghosh-97}{article}{
      author={Ghosh, M.~K.},
      author={Arapostathis, A.},
      author={Marcus, S.~I.},
       title={Ergodic control of switching diffusions},
        date={1997},
        ISSN={0363-0129},
     journal={SIAM J. Control Optim.},
      volume={35},
      number={6},
       pages={1952\ndash 1988},
      review={\MR{1478649}},
}

\bib{AP18}{article}{
      author={Arapostathis, Ari},
      author={Pang, Guodong},
       title={Infinite-horizon average optimality of the {N}-network in the
  {H}alfin-{W}hitt regime},
        date={2018},
     journal={Math. Oper. Res.},
      volume={43},
      number={3},
       pages={838\ndash 866},
      review={\MR{3846075}},
}

\bib{AP19}{article}{
      author={Arapostathis, Ari},
      author={Pang, Guodong},
       title={Infinite horizon asymptotic average optimality for large-scale
  parallel server networks},
        date={2019},
     journal={Stochastic Process. Appl.},
      volume={129},
      number={1},
       pages={283\ndash 322},
      review={\MR{3906999}},
}

\bib{ADPZ19}{article}{
      author={Arapostathis, Ari},
      author={Das, Anirban},
      author={Pang, Guodong},
      author={Zheng, Yi},
       title={Optimal control of {M}arkov-modulated multiclass many-server
  queues},
        date={2019},
     journal={Stochastic Systems},
      volume={9},
      number={2},
       pages={155\ndash 181},
}

\bib{APZ19b}{article}{
      author={Arapostathis, A.},
      author={Pang, G.},
      author={Zheng, Y.},
       title={Optimal scheduling of critically loaded multiclass many-server
  queues with service interruptions},
        date={to appear},
     journal={ArXiv e-prints},
}

\bib{GS72}{book}{
      author={G\={i}hman, \u{I}.~\={I}.},
      author={Skorohod, A.~V.},
       title={Stochastic differential equations},
   publisher={Springer-Verlag, New York-Heidelberg},
        date={1972},
        note={Translated from the Russian by Kenneth Wickwire, Ergebnisse der
  Mathematik und ihrer Grenzgebiete, Band 72},
      review={\MR{0346904}},
}

\bib{Gyongy-96}{article}{
      author={Gy\"{o}ngy, Istv\'{a}n},
      author={Krylov, Nicolai},
       title={Existence of strong solutions for {I}t\^{o}'s stochastic
  equations via approximations},
        date={1996},
        ISSN={0178-8051},
     journal={Probab. Theory Related Fields},
      volume={105},
      number={2},
       pages={143\ndash 158},
         url={https://doi-org.ezproxy.lib.utexas.edu/10.1007/BF01203833},
      review={\MR{1392450}},
}

\bib{Skorokhod-89}{book}{
      author={Skorokhod, A.~V.},
       title={Asymptotic methods in the theory of stochastic differential
  equations},
      series={Translations of Mathematical Monographs},
   publisher={American Mathematical Society, Providence, RI},
        date={1989},
      volume={78},
        note={Translated from the Russian by H. H. McFaden},
      review={\MR{1020057}},
}

\bib{AHP19}{article}{
      author={Hmedi, Hassan},
      author={{Arapostathis}, Ari},
      author={{Pang}, Guodong},
       title={On uniform stability of certain parallel server networks with no
  abandonment in the {H}alfin--{W}hitt regime},
        date={2019},
     journal={ArXiv e-prints},
      volume={1907.04793},
      eprint={https://arxiv.org/abs/1907.04793},
}

\bib{AHP18}{article}{
      author={Arapostathis, A.},
      author={Hmedi, H.},
      author={Pang, G.},
       title={On uniform exponential ergodicity of {M}arkovian multiclass
  many-server queues in the {H}alfin-{W}hitt regime},
        date={2018},
     journal={ArXiv e-prints},
      volume={1812.03528v1},
      eprint={https://arxiv.org/abs/1812.03528v1},
}

\bib{AHPS19}{collection}{
      author={Arapostathis, Ari},
      author={Hmedi, Hassan},
      author={Pang, Guodong},
      author={Sandri\'{c}, Nikola},
      editor={Yin, George},
      editor={Zhang, Qing},
       title={Uniform polynomial rates of convergence for a class of
  {L}\'evy-driven controlled {SDE}s arising in multiclass many-server queues},
      series={{M}odeling, {S}tochastic {C}ontrol, {O}ptimization, and
  {A}pplications. The IMA Volumes in Mathematics and its Applications},
   publisher={Springer, Cham},
        date={2019},
      volume={164},
}

\bib{PE82}{article}{
      author={Echeverr\'{i}a, Pedro},
       title={A criterion for invariant measures of {M}arkov processes},
        date={1982},
        ISSN={0044-3719},
     journal={Z. Wahrsch. Verw. Gebiete},
      volume={61},
      number={1},
       pages={1\ndash 16},
         url={https://doi-org.ezproxy.lib.utexas.edu/10.1007/BF00537221},
      review={\MR{671239}},
}

\bib{GM02}{book}{
      author={Garroni, Maria~Giovanna},
      author={Menaldi, Jose~Luis},
       title={Second order elliptic integro-differential problems},
      series={Chapman \& Hall/CRC Research Notes in Mathematics},
   publisher={Chapman \& Hall/CRC, Boca Raton, FL},
        date={2002},
      volume={430},
      review={\MR{1911531}},
}

\bib{Li03}{incollection}{
      author={Li, C.~W.},
       title={Lyapunov exponents of nonlinear stochastic differential equations
  with jumps},
        date={2003},
   booktitle={Stochastic inequalities and applications},
      series={Progr. Probab.},
      volume={56},
   publisher={Birkh\"{a}user, Basel},
       pages={339\ndash 351},
      review={\MR{2073440}},
}

\bib{GilTru}{book}{
      author={Gilbarg, David},
      author={Trudinger, Neil~S.},
       title={Elliptic partial differential equations of second order},
     edition={Second},
      series={Grundlehren der Mathematischen Wissenschaften},
   publisher={Springer-Verlag, Berlin},
        date={1983},
      volume={224},
        ISBN={3-540-13025-X},
      review={\MR{737190}},
}

\bib{AA-Harnack}{article}{
      author={Arapostathis, Ari},
      author={Ghosh, Mrinal~K.},
      author={Marcus, Steven~I.},
       title={Harnack's inequality for cooperative weakly coupled elliptic
  systems},
        date={1999},
     journal={Comm. Partial Differential Equations},
      volume={24},
      number={9-10},
       pages={1555\ndash 1571},
      review={\MR{1708101}},
}

\bib{Ichihara-13}{article}{
      author={Ichihara, Naoyuki},
      author={Sheu, Shuenn-Jyi},
       title={Large time behavior of solutions of {H}amilton-{J}acobi-{B}ellman
  equations with quadratic nonlinearity in gradients},
        date={2013},
     journal={SIAM J. Math. Anal.},
      volume={45},
      number={1},
       pages={279\ndash 306},
      review={\MR{3032978}},
}

\end{biblist}
\end{bibdiv}

\end{document}